\apptocmd{\thebibliography}{\raggedright}{}{}
\numberwithin{equation}{section}
\theoremstyle{plain}
\newtheorem{theorem}{Theorem}[section]
\newtheorem{maintheorem}{Theorem}
\newtheorem{proposition}[theorem]{Proposition}
\newtheorem{lemma}[theorem]{Lemma}
\newtheorem*{claim}{Claim}
\newtheorem{casea}{Case}
\theoremstyle{definition}
\newtheorem{defn}[theorem]{Definition}
\newenvironment{definition}[1][]{\begin{defn}[#1]\pushQED{\qed}}{\popQED \end{defn}}
\theoremstyle{remark}
\newtheorem{rmk}[theorem]{Remark}
\newenvironment{remark}[1][]{\begin{rmk}[#1] \pushQED{}}{\popQED \end{rmk}}
\newtheorem{eg}[theorem]{Example}
\newenvironment{example}[1][]{\begin{eg}[#1] \pushQED{\qed}}{\popQED \end{eg}}
\DeclareMathOperator{\Hom}{Hom}
\DeclareMathOperator{\Ker}{ker}
\DeclareMathOperator{\GL}{GL}
\DeclareMathOperator{\SL}{SL}
\DeclareMathOperator{\Mat}{Mat}
\newcommand\R{\ensuremath{\mathbb{R}}}
\newcommand\C{\ensuremath{\mathbb{C}}}
\newcommand\Z{\ensuremath{\mathbb{Z}}}
\newcommand\Q{\ensuremath{\mathbb{Q}}}
\newcommand\Field{\ensuremath{\mathbb{F}}}
\newcommand\bbG{\ensuremath{\mathbb{G}}}
\DeclareMathOperator{\HH}{H}
\DeclareMathOperator{\CC}{C}
\newcommand\RH{\ensuremath{\widetilde{\HH}}}
\DeclareMathOperator{\Ind}{Ind}
\DeclareMathOperator{\link}{lk}
\DeclareMathOperator{\rk}{rk}
\DeclareMathOperator{\Interior}{Int}
\newcommand\Set[2]{\ensuremath{\left\{\text{#1 $|$
        #2}\right\}}}
\newcommand\SetLong[2]{\ensuremath{\left \{ #1
      \mathrel{}\middle|\mathrel{} #2 \right \} }}
\newcommand\cO{\ensuremath{\mathcal{O}}}
\newcommand\cF{\ensuremath{\mathcal{F}}}
\newcommand\cE{\ensuremath{\mathcal{E}}}
\newcommand\fD{\ensuremath{\mathfrak{D}}}
\newcommand\fF{\ensuremath{\mathfrak{F}}}
\newcommand\fP{\ensuremath{\mathfrak{P}}}
\newcommand\tC{\ensuremath{\widetilde{C}}}
\newcommand\tsigma{\ensuremath{\widetilde{\sigma}}}
\newcommand\oC{\ensuremath{\overline{C}}}
\newcommand\omQ{\ensuremath{\overline{\Q}}}
\newcommand\hB{\ensuremath{\widehat{B}}}
\DeclareMathOperator{\St}{St}
\DeclareMathOperator{\vcd}{vcd}
\DeclareMathOperator{\cl}{cl}
\DeclareMathOperator{\CL}{CL}
\newcommand\Tits{\ensuremath{\mathcal{T}}}
\newcommand\Lines{\ensuremath{\mathcal{L}}}
\newcommand\bfG{\ensuremath{\mathbf{G}}}
\newcommand\bfS{\ensuremath{\mathbf{S}}}
\newcommand\Id{\ensuremath{\operatorname{Id}}}
\newcommand\cptK{\ensuremath{\mathfrak{K}}}
\providecommand{\abs}[1]{\left\lvert#1\right\rvert}
\newcommand{\p}[1]{{\bf #1.}}
\title{\vspace{-40pt}The dualizing module and top-dimensional cohomology group of $\GL_n(\cO)$\vspace{-15pt}}
\author{Andrew Putman\thanks{Supported in part by NSF grant DMS-1811210} \and Daniel Studenmund\thanks{Supported in part by NSF grant DMS-1547292}}
\date{}
\begin{document}

\vspace{-10pt}
\maketitle

\vspace{-18pt}
\begin{abstract}
\noindent
For a number ring $\cO$, Borel and Serre proved that $\SL_n(\cO)$ is a virtual duality
group whose dualizing module is the Steinberg module.  They also proved that $\GL_n(\cO)$ is
a virtual duality group.  In contrast to $\SL_n(\cO)$, we prove that the dualizing
module of $\GL_n(\cO)$ is sometimes the Steinberg module, but sometimes instead is
a variant that takes into account a sort of orientation.
Using this, we obtain vanishing and nonvanishing theorems for the cohomology of 
$\GL_n(\cO)$ in its virtual cohomological dimension.  
\end{abstract}

\setlength{\parskip}{0pt}
\tableofcontents
\setlength{\parskip}{\baselineskip}

\section{Introduction}
\label{section:introduction}

The following contrasting theorems are two of the main results of this paper.
Let $\cl(\cO)$ denote the class group of a number ring $\cO$.

\begin{maintheorem}[Vanishing]
\label{maintheorem:highsmall}
Let $\cO$ be the ring of integers in a number field $K$ and let $\vcd$ be the
virtual cohomological dimension of $\GL_n(\cO)$.  Assume that $n$ is even
and that $\cO^{\times}$ contains an element of norm $-1$.  Also, letting
$r$ and $2s$ be the number of real and complex embeddings of $K$, assume
that $r+s \geq n$.
Then $\HH^{\vcd}(\GL_n(\cO);\Q) = 0$.
\end{maintheorem}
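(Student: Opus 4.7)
The approach is to apply Borel--Serre duality and then show that a specific twist of the dualizing module has vanishing coinvariants. Since $\GL_n(\cO)$ is a virtual duality group of virtual cohomological dimension $\vcd$, Borel--Serre duality gives
\[
\HH^{\vcd}(\GL_n(\cO); \Q) \cong (D \otimes \Q)_{\GL_n(\cO)},
\]
where $D$ is the dualizing module of $\GL_n(\cO)$. So the task reduces to showing these coinvariants vanish.

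Under the hypotheses of the theorem, I expect the main dualizing-module result of this paper to identify $D \otimes \Q$ as a \emph{twisted} Steinberg module $\St_n(K) \otimes \varepsilon$, where $\varepsilon \colon \GL_n(\cO) \to \{\pm 1\}$ is a nontrivial orientation character that, on a diagonal matrix, evaluates to $\mathrm{sign}(N_{K/\Q}(\det))$. The three hypotheses ($n$ even, a unit of norm $-1$, and $r + s \geq n$) should be precisely what is needed to put us in this twisted case and to ensure $\varepsilon$ is nontrivial on some element that is convenient to work with. Establishing this identification is the main structural theorem of the paper and the main obstacle in the overall proof; granted it, the rest of the argument is essentially formal.

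To kill the coinvariants, let $u \in \cO^{\times}$ be a unit with $N_{K/\Q}(u) = -1$, and set $h_0 = \mathrm{diag}(u, 1, \ldots, 1) \in \GL_n(\cO)$. Being diagonal, $h_0$ preserves each coordinate subspace of $K^n$ and so acts trivially on the standard integral apartment class $[e_1, \ldots, e_n] \in \St_n(K)$. However $\varepsilon(h_0) = \mathrm{sign}(N_{K/\Q}(u)) = -1$, so on $\St_n(K) \otimes \varepsilon$ the element $h_0$ sends $[e_1, \ldots, e_n]$ to $-[e_1, \ldots, e_n]$; in coinvariants over $\Q$ this forces $[e_1, \ldots, e_n] = 0$. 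For any other integral apartment class $[v_1, \ldots, v_n]$ coming from a basis of $\cO^n$, one can choose $g \in \GL_n(\cO)$ sending $e_i$ to $v_i$ and apply the same trick to the conjugate $g h_0 g^{-1}$, which still lies in $\GL_n(\cO)$, fixes $[v_1, \ldots, v_n]$, and has $\varepsilon = -1$. Since integral apartment classes generate $\St_n(K)$ as a $\GL_n(\cO)$-module by Ash--Rudolph (with appropriate extensions for general number rings), the full coinvariants vanish, giving $\HH^{\vcd}(\GL_n(\cO); \Q) = 0$.
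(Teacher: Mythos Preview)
Your argument is precisely the ``tempting but wrong proof'' that the paper discusses and rejects in the introduction. The gap is the final claim that integral apartment classes generate $\St_n(K)$. Ash--Rudolph proved this only when $\cO$ is Euclidean, and Church--Farb--Putman extended it to the case $\cl(\cO)=0$ with a real embedding; but Church--Farb--Putman also proved that when $\cl(\cO)\neq 0$, the integral apartment classes do \emph{not} generate $\St_n(K)$. Since the orbit of an integral apartment under $\GL_n(\cO)$ is again an integral apartment, ``generating as a $\GL_n(\cO)$-module'' does not help. Your argument therefore proves the theorem only when $\cl(\cO)=0$, a case already covered by Church--Farb--Putman and Lee--Szczarba.

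A symptom of this gap is that you never use the hypothesis $r+s\geq n$. The identification of the dualizing module as $\St_n(K)\otimes\Q_\chi$ requires only that $n$ be even and that $\cO^\times$ contain a unit of norm $-1$; the condition $r+s\geq n$ plays no role there. In the paper's actual proof, this inequality is essential: the argument studies the full simplicial chain complex of the Tits building and reduces (via two spectral sequences and Shapiro's lemma) to showing that $\GL_n(\cO,\fF)$ acts trivially on $\HH_k(\CL_n(\cO,\fF);\Q)$ for flag stabilizers in a range of degrees bounded by $\min(r+s,n)-1$. Proving this triviality (Propositions~\ref{proposition:flagactiontrivial}, \ref{proposition:carry}, \ref{proposition:projectiveactiontrivial}) is the real work and uses equivariant homology on a complex of lines together with information about the Zariski closure of the norm-$1$ units; the bound $r+s-1$ enters through the eigenvalue argument in Lemma~\ref{lemma:spectralsequence}. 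None of this machinery is present in your sketch.
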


\begin{maintheorem}[Nonvanishing]
\label{maintheorem:highbig}
Let $\cO$ be the ring of integers in a number field $K$ and let $\vcd$ be the
virtual cohomological dimension of $\GL_n(\cO)$.  Assume either that $n$
is odd or that $\cO^{\times}$ does not contain an element of norm $-1$.  Then
the dimension of $\HH^{\vcd}(\GL_n(\cO);\Q)$ is at least
$(|\cl(\cO)|-1)^{n-1}$.
\end{maintheorem}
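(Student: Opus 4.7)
The plan is to apply Bieri--Eckmann duality. Since $\GL_n(\cO)$ is a virtual duality group of dimension $\vcd$ with some dualizing module $\fD$, one has
$$\HH^{\vcd}(\GL_n(\cO);\Q)\;\cong\;\bigl(\fD\otimes\Q\bigr)_{\GL_n(\cO)},$$
so the task reduces to producing $(|\cl(\cO)|-1)^{n-1}$ linearly independent elements in these coinvariants. The first step is to invoke the identification of $\fD$ supplied by the earlier sections of the paper under the hypotheses of the theorem: when $n$ is odd or $\cO^\times$ has no norm-$(-1)$ element, $\fD$ is either the Steinberg module $\St_n(K)$ or a closely related sign-twisted variant, so $\fD\otimes\Q$ is spanned by apartment classes $[L_1,\ldots,L_n]$ associated to ordered splittings $K^n=L_1\oplus\cdots\oplus L_n$ into lines, subject to the Solomon--Tits relations (with an extra sign under reordering in the twisted case).

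Next I would build an explicit family of candidate classes. For each tuple $\vec c=(c_2,\ldots,c_n)\in(\cl(\cO)\setminus\{1\})^{n-1}$, choose fractional ideals $L_i\subset K$ with $[L_i]=c_i$ and set $L_1=\cO$. The apartment class $A_{\vec c}:=[L_1,\ldots,L_n]\in\fD\otimes\Q$ descends to a well-defined element of the coinvariants independent of the choice of representatives, since scaling any $L_i$ by a nonzero element of $K$ preserves the apartment. This produces the required $(|\cl(\cO)|-1)^{n-1}$ candidates.

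The main obstacle, and the heart of the proof, is showing that the $A_{\vec c}$ remain linearly independent in the coinvariants. Following the Church--Farb--Putman strategy for $\SL_n$, I would attempt to construct a $\GL_n(\cO)$-equivariant homomorphism
$$\Phi:\fD\otimes\Q\;\longrightarrow\;\Q\bigl[(\cl(\cO)\setminus\{1\})^{n-1}\bigr]$$
such that $\Phi(A_{\vec c})=\vec c$ while $\Phi([L_1,\ldots,L_n])=0$ whenever some $L_i$ is principal. The exclusion of the trivial class from each factor of the target is natural, since an apartment with $L_i=\cO$ lies in the $\GL_n(\cO)$-orbit of one in which $L_1=\cO$, forcing such a class to be killed in the coinvariants. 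Verifying that $\Phi$ is well-defined --- respecting the Solomon--Tits relations and, in the twisted case, the sign --- is the delicate point and is precisely where the hypotheses of Theorem~\ref{maintheorem:highbig} enter: a norm-$(-1)$ unit in the even-$n$ case would force the relation $A_{\vec c}=-A_{\vec c}$, collapsing these classes, but no such obstruction arises under the standing assumption. Once $\Phi$ is constructed, independence of the $A_{\vec c}$ modulo the $\GL_n(\cO)$-action is immediate, completing the lower bound.
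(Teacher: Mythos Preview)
Your overall strategy matches the paper's: apply duality, identify the dualizing module via Theorem~\ref{maintheorem:dualizing}, and then invoke the lower bound on $(\St_n(K)\otimes\Q)_{\GL_n(\cO)}$ from Church--Farb--Putman. The paper simply \emph{cites} that last step rather than reproving it, so your attempt to sketch the CFP argument is where the issues arise.

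First, a simplification you missed: under your hypotheses the dualizing module is \emph{exactly} $\St_n(K)$, not ``a closely related sign-twisted variant''. Theorem~\ref{maintheorem:dualizing} gives $\fD\cong\St_n(K)\otimes(\Z_\chi)^{\otimes(n-1)}$; if $n$ is odd then $n-1$ is even so the twist is trivial, and if $\cO^\times$ has no norm-$(-1)$ element then $\chi$ itself is trivial. So all your hedging about ``the twisted case'' is unnecessary, and the problem reduces cleanly to bounding $(\St_n(K)\otimes\Q)_{\GL_n(\cO)}$ from below.

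Second, your sketch of the detection map $\Phi$ is internally inconsistent. You set $L_1=\cO$ in the construction of $A_{\vec c}$ and ask that $\Phi(A_{\vec c})=\vec c\neq 0$, yet you also demand that $\Phi$ vanish on any apartment with a principal $L_i$ --- which $A_{\vec c}$ itself has. Relatedly, the sentence ``an apartment with $L_i=\cO$ lies in the $\GL_n(\cO)$-orbit of one in which $L_1=\cO$, forcing such a class to be killed in the coinvariants'' is wrong: lying in the same orbit makes classes \emph{equal} in the coinvariants, not zero. The actual CFP construction is more subtle than what you have written (it does not, and cannot, send all apartments with a principal line to zero), and verifying that it respects the Steinberg relations is genuine work that you have not done. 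Since this is precisely what the paper outsources to \cite{ChurchFarbPutman}, your proposal should either do the same or carry out that construction correctly.
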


In the rest of the introduction, we will explain the origin and motivation for
these results.  In particular, we will explain why the parity of $n$ and
the (non)existence of elements in $\cO^{\times}$ of norm $-1$ should have
something to do with the cohomology of $\GL_n(\cO)$ in its virtual cohomological
dimension.

\begin{remark}
In light of the dichotomy suggested by Theorems \ref{maintheorem:highsmall} and \ref{maintheorem:highbig}, it
is natural to wonder which of their hypotheses are necessary.  In particular, it is unclear
whether the restrictive hypothesis $r+s \geq n$ is needed in Theorem \ref{maintheorem:highsmall}.  We
will discuss this at the end of the introduction.
\end{remark}

\begin{remark}
Theorem \ref{maintheorem:highbig} is closely connected to a recent theorem
of Church--Farb--Putman \cite{ChurchFarbPutman} that says that if $\nu$
is the virtual cohomological dimension of $\SL_n(\cO)$, then
the dimension of $\HH^{\nu}(\SL_n(\cO);\Q)$ is at least
$(|\cl(\cO)|-1)^{n-1}$.  Note that no assumption on $n$ or $\cO^{\times}$
is necessary.  The paper \cite{ChurchFarbPutman} also proves a vanishing
theorem for $\HH^{\nu}(\SL_n(\cO);\Q)$ that bears a superficial relationship
to Theorem \ref{maintheorem:highsmall}, but in fact the mechanisms behind the results
are completely different.  We will discuss this more later in the introduction.
\end{remark}

\p{Duality} Let $\cO$ be the ring of integers in a number field $K$ and let $r$ and $2s$ be the
numbers of real and complex embeddings of $K$. A fundamental result of
Borel--Serre \cite{BorelSerreCorners} says that the virtual cohomological 
dimension of $\GL_n(\cO)$ is
\[\vcd = r\binom{n+1}{2}+s n^2 - n.\]
Even better, they proved that $\GL_n(\cO)$ is a virtual duality group of dimension $\vcd$.  By definition,
this means that there is a $\Z[\GL_n(\cO)]$-module $\fD$ called the {\em virtual dualizing module}
such that the following holds.  Let $G \subset \GL_n(\cO)$ be a finite-index subgroup, including
possibly $G = \GL_n(\cO)$.  Let $R$ be a commutative ring such that for all finite subgroups $F < G$, the
order $|F|$ is invertible in $R$.  We thus can take $R = \Z$ if $G$ is torsion-free and
$R = \Q$ in all cases.  Then for all $R[G]$-modules $M$, we have
\[\HH^{\vcd-i}(G;M) \cong \HH_i(G;M \otimes \fD)\]
for all $i \geq 0$.

\begin{remark}
In most treatments of virtual duality groups, the duality relation is only discussed for torsion-free
subgroups of finite index.  It is well-known that the above holds for subgroups with torsion, but
we do not know a source that gives a detailed proof of this.
We will describe how this works for $\GL_n(\cO)$ in \S \ref{section:reducedual}.
\end{remark}

Specializing to $i=0$ and $G = \GL_n(\cO)$ and $M = \Q$, this says that
\[\HH^{\vcd}(\GL_n(\cO);\Q) \cong \HH_0(\GL_n(\cO);\Q \otimes \fD) \cong (\Q \otimes \fD)_{\GL_n(\cO)},\]
where the subscript indicates that we are taking coinvariants.  Theorems 
\ref{maintheorem:highsmall} and \ref{maintheorem:highbig} can thus be translated
into results about the action of $\GL_n(\cO)$ on its virtual dualizing module $\fD$.
The third main result of this paper identifies $\fD$.

\p{Special linear group and and the Steinberg module}
To motivate this identification, we first explain the better-understood
case of $\SL_n(\cO)$.  Just like for $\GL_n(\cO)$, Borel--Serre proved 
that $\SL_n(\cO)$ is a virtual duality group of virtual cohomological dimension
\[\nu = r\binom{n+1}{2}+s n^2 - n - r - s + 1.\]
They also gave the following beautiful description of the virtual dualizing
module for $\SL_n(\cO)$: it is the {\em Steinberg module} for $\SL_n(K)$, which 
we now describe.  Let $\Tits_n(K)$ be the Tits building for $\SL_n(K)$, i.e.\ the
geometric realization of the poset of $K$-parabolic subgroups of $\SL_n$.  The
$K$-parabolic subgroups of $\SL_n$ are precisely the stabilizers of flags
\begin{equation}
\label{eqn:kparabolic}
0 \subsetneq V_0 \subsetneq \cdots \subsetneq V_r \subsetneq K^n,
\end{equation}
and $\Tits_n(K)$ can alternately be described as the simplicial complex
whose $r$-simplices are flags as in \eqref{eqn:kparabolic}.  The
Solomon--Tits theorem \cite{Solomon, BrownBuildings} says that $\Tits_n(K)$
is homotopy equivalent to a wedge of $(n-2)$-spheres.  The Steinberg module
$\St_n(K)$ is $\RH_{n-2}(\Tits_n(K))$.  The action of $\SL_n(\cO)$ on $\St_n(K)$
is the restriction to $\SL_n(\cO)$ of the one induced by the action of
$\SL_n(K)$ on $\Tits_n(K)$.

Borel--Serre proved their theorem by constructing a bordification of
the symmetric space for $\SL_n(\cO)$.  The boundary of this bordification
has a stratification whose combinatorics are encoded by those
of the $K$-parabolic subgroups of $\SL_n$.  As a result,
the boundary is homotopy equivalent to $\Tits_n(K)$.

\p{General linear group}
To prove that $\GL_n(\cO)$ is a virtual duality group, Borel--Serre 
constructed a bordification of its associated symmetric space
in terms of the $K$-parabolic subgroups of $\GL_n$.  Since the $K$-parabolic
subgroups of $\GL_n$ are also the stabilizers of flags in $K^n$, it
follows that the boundary of their bordification for $\GL_n(\cO)$ is
homotopy equivalent to $\Tits_n(K)$.  This might lead the reader to expect that the virtual dualizing
module for $\GL_n(\cO)$ is also the Steinberg module $\St_n(K)$.

Unfortunately, this is false (see, e.g., \cite[\S 3]{Reeder} and \cite[\S 3.1]{OldThmC}).  Here is an easy
example of this failure.  We would like
to thank Jeremy Miller and Peter Patzt for pointing it out to us.

\begin{example}
\label{example:level2}
The virtual cohomological dimension of $\GL_2(\Z)$ is $1$.
Let $\Gamma_2(2)$ denote the level-$2$ principal congruence subgroup of $\GL_2(\Z)$, i.e.\ the kernel
of the map $\GL_2(\Z) \rightarrow \GL_2(\Field_2)$ that reduces matrix
entries modulo $2$.  Letting $\fD$ be the virtual dualizing module for $\GL_2(\Z)$ and
thus also for its finite-index subgroup $\Gamma_2(2)$, we have
\[\HH^1(\Gamma_2(2);\Q) \cong \HH_0(\Gamma_2(2);\Q \otimes \fD) = (\Q \otimes \fD)_{\Gamma_2(2)},\]
where the subscripts indicate that we are taking the coinvariants.  As
the following calculations show, $\HH^1(\Gamma_2(2);\Q) = 0$ and
$(\Q \otimes \St_2(\Q))_{\Gamma_2(2)} \neq 0$, so $\St_2(\Q) \neq \fD$.
\setlength{\parskip}{0pt}
\begin{compactitem}
\item The group $\Gamma_2(2)$ is generated by the matrices
\[a = \left(\begin{matrix} 1 & 2 \\ 0 & 1 \end{matrix}\right)
\quad \text{and} \quad
b = \left(\begin{matrix} 1 & 0 \\ 2 & 1 \end{matrix}\right)
\quad \text{and} \quad
c = \left(\begin{matrix} -1 & 0 \\ 0 & 1 \end{matrix}\right)
\quad \text{and} \quad
d = \left(\begin{matrix} 1 & 0 \\ 0 & -1 \end{matrix}\right).\]
We have $c^2=d^2=1$.  Also, $c a c^{-1} = a^{-1}$ and $c b c^{-1} = b^{-1}$.  It
follows that all the generators become torsion in the abelianization of $\Gamma_2(2)$,
so $\HH^1(\Gamma_2(2);\Q) = 0$.
\item The space $\Tits_2(\Q)$ is the discrete set of lines in $\Q^2$.  Such lines
are in bijection with rank-$1$ direct summands of $\Z^2$, and thus can be reduced
modulo $2$ to give lines in $\Field_2^2$.  This gives a surjection
$\Tits_2(\Q) \twoheadrightarrow \Tits_2(\Field_2)$ and hence a surjection
$\pi\colon \St_2(\Q) \twoheadrightarrow \St_2(\Field_2)$.  Since $\pi$ is $\Gamma_2(2)$-invariant,
it induces a surjection 
\[(\Q \otimes \St_2(\Q))_{\Gamma_2(2)} \twoheadrightarrow \Q \otimes \St_2(\Field_2) \neq 0.\qedhere\]
\end{compactitem}
\end{example}
\setlength{\parskip}{\baselineskip}

What is happening in the above example is that $\GL_2(\Z)$ acts in an orientation-reversing
way on its symmetric space.  The identification of the Steinberg module for $\SL_n(\cO)$ passes
through Poincar\'{e}--Lefschetz duality, so to do the same for $\GL_n(\cO)$ we must
take into account orientations.  

\p{Dualizing module}
If $G$ is a group and $A$ is an abelian group and $\chi\colon G \rightarrow \{\pm 1\}$ 
is a homomorphism, then let $A_{\chi}$ denote $A$ endowed with the $\Z[G]$-module
structure arising from the action 
\[g \cdot a = \chi(g) \cdot a \quad \text{for all $g \in G$ and $a \in A$}.\]
Our third main theorem is then the following.  Recall that the group of units 
$\cO^{\times}$ is precisely the set of elements of $\cO$ whose norm is $\pm 1$.

\begin{maintheorem}[Dualizing module]
\label{maintheorem:dualizing}
Let $\cO$ be the ring of integers in a number field $K$ and let $\fD$ be the
virtual dualizing module of $\GL_n(\cO)$.  Letting
$\chi\colon \GL_n(\cO) \rightarrow \{\pm 1\}$ be the composition of
the determinant homomorphism with the norm map $\cO^{\times} \rightarrow \{\pm 1\}$,
we then have $\fD \cong \St_n(K) \otimes (\Z_{\chi})^{\otimes(n-1)}$.
\end{maintheorem}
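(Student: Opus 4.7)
The strategy is to run Borel--Serre duality on the bordification $\bar X$ of the symmetric space for $\GL_n(\cO)$ while tracking an orientation twist, and then compute that twist explicitly. Borel--Serre construct $\bar X$ as a contractible manifold with corners carrying a proper cocompact $\GL_n(\cO)$-action, with $\partial\bar X$ equivariantly homotopy equivalent to $\Tits_n(K)$. Applying Poincar\'e--Lefschetz duality to the pair $(\bar X,\partial\bar X)$ (and using contractibility of $\bar X$) identifies the dualizing module as
\[
\fD \;\cong\; \widetilde\HH_{n-2}(\Tits_n(K);\Z)\otimes\Z_\epsilon \;=\; \St_n(K)\otimes\Z_\epsilon,
\]
where $\epsilon\colon\GL_n(\cO)\to\{\pm 1\}$ is the character describing how $\GL_n(\cO)$ acts on the orientation of $\bar X$. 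Setting up this twisted duality while also handling torsion in $\GL_n(\cO)$ is the purpose of \S\ref{section:reducedual}. The theorem then reduces to identifying $\epsilon$ with $\chi^{n-1}$.

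To compute $\epsilon$, the symmetric space factors as a product $X=\prod_{v\mid\infty}X_v$ over the archimedean places of $K$, and $\epsilon$ correspondingly factors as a product of local characters. At a complex place, $\GL_n(\C)$ is connected, so it acts orientation-preservingly and the local character is trivial. At a real place $\sigma$, the identity component $\GL_n^+(\R)$ also acts orientation-preservingly, so the local character is either trivial or $\operatorname{sgn}(\det\sigma(\cdot))$, and it suffices to evaluate it on one element of negative determinant. Modeling the local symmetric space as positive-definite symmetric matrices modulo positive scalars with basepoint $[I]$, the element $g=\operatorname{diag}(-1,1,\ldots,1)$ fixes $[I]$ and acts on the tangent space (traceless symmetric matrices) by conjugation. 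On the standard basis of $\operatorname{Sym}_n$, this conjugation fixes the $n$ diagonal matrices $e_{ii}$, negates the $n-1$ basis elements $e_{1j}+e_{j1}$ for $j\geq 2$, and fixes the remaining $e_{ij}+e_{ji}$ with $2\leq i<j$; the trace direction $\sum_i e_{ii}$ is fixed. Hence the determinant on traceless symmetric matrices is $(-1)^{n-1}$, giving $\epsilon_\sigma(g)=\operatorname{sgn}(\det\sigma(g))^{n-1}$.

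Finally, a complex pair of embeddings contributes a positive factor $|\tau(\det g)|^2$ to $N_{K/\Q}(\det g)$, so $\operatorname{sgn}(N_{K/\Q}(\det g))=\prod_{\sigma\text{ real}}\operatorname{sgn}(\det\sigma(g))$, yielding $\epsilon(g)=\chi(g)^{n-1}$ and $\fD\cong\St_n(K)\otimes\Z_\chi^{\otimes(n-1)}$. The principal obstacle is the first step: Borel--Serre duality is usually stated only for torsion-free finite-index subgroups acting orientation-preservingly, so one must carefully extend it to a version that handles both torsion in $\GL_n(\cO)$ and a nontrivial orientation character simultaneously. Once that machinery is in place, the rest is the elementary tangent-space calculation above, with the exponent $n-1$ arising naturally from the $n-1$ off-diagonal entries in the first row of a symmetric matrix.
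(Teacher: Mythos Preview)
Your proposal is correct and follows essentially the same strategy as the paper: reduce to computing the orientation character $\epsilon$ on the Borel--Serre bordification via Poincar\'e--Lefschetz duality, then compute $\epsilon$ place by place via a tangent-space calculation at a fixed point of $\operatorname{diag}(-1,1,\ldots,1)$, obtaining $(-1)^{n-1}$ at each real place and $+1$ at each complex place. One small imprecision worth flagging: the symmetric space $X=G/(\cptK\cdot S)$ does not literally factor as $\prod_{v\mid\infty}X_v$, since the $\Q$-split center $S$ is only one-dimensional; the paper handles this via Lemma~\ref{lemma:ignorecenter}, which lets one compute $\epsilon$ on $G/\cptK$ instead (and this does factor), after which your tangent-space computation on traceless symmetric matrices matches the paper's on $\mathfrak{gl}_n(\R)/\mathfrak{o}(n)$.
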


The virtual dualizing module of $\GL_n(\cO)$ is thus different from $\St_n(K)$ if and only
if $n$ is even and $\cO^{\times}$ has an element of norm $-1$.  This latter condition
forces $\cO$ to have a real embedding, so for instance never holds for rings
of integers in imaginary quadratic fields.  Beyond this, it is poorly understood
which number rings have elements of norm $-1$, even for rings of integers in
real quadratic fields.

\begin{remark}
Theorem \ref{maintheorem:dualizing} seems to have been known to the experts, and results
like it are mentioned in the literature in several places (see, e.g., \cite[\S 3]{Reeder} and \cite[\S 3.1]{OldThmC}).
However, no source we are aware of contains a proof of it in complete generality.  Since
we need Theorem \ref{maintheorem:dualizing} for Theorems \ref{maintheorem:highsmall} and \ref{maintheorem:highbig},
we have taken this opportunity to fill this hole in the literature. 
\end{remark}

\p{Cohomology in the vcd}
Having identified the virtual dualizing module $\fD$ for $\GL_n(\cO)$ in Theorem 
\ref{maintheorem:dualizing}, we now discuss Theorems \ref{maintheorem:highsmall}
and \ref{maintheorem:highbig}, which concern
\[\HH^{\vcd}(\GL_n(\cO);\Q) \cong (\Q \otimes \fD)_{\GL_n(\cO)}.\]
The restriction of the $\GL_n(\cO)$-module $\fD$ to $\SL_n(\cO)$ is
simply the Steinberg module $\St_n(K)$.  Letting $\nu$ be the virtual cohomological
dimension of $\SL_n(\cO)$, we thus have
\[\HH^{\nu}(\SL_n(\cO);\Q) \cong (\Q \otimes \St_n(K))_{\SL_n(\cO)} = (\Q \otimes \fD)_{\SL_n(\cO)}.\]
In \cite{ChurchFarbPutman}, Church--Farb--Putman proved two results about these
$\SL_n(\cO)$-coinvariants.

The first result of \cite{ChurchFarbPutman} generalizes
a theorem of Lee--Szczarba \cite[Theorem 4.1]{LeeSzczarba} that
says that if $\cO$ is Euclidean, then $(\Q \otimes \St_n(K))_{\SL_n(\cO)} = 0$.
The paper \cite{ChurchFarbPutman} says that this also holds if $\cl(\cO) = 0$
and $\cO$ has a real embedding.
Since $(\Q \otimes \fD)_{\GL_n(\cO)}$ is a quotient of 
$(\Q \otimes \St_n(K))_{\SL_n(\cO)}$, this implies that
under these assumptions we have
\[\HH^{\vcd}(\GL_n(\cO);\Q) \cong (\Q \otimes \fD)_{\GL_n(\cO)} = 0.\]
This vanishing result was already noted by Church--Farb--Putman; we
will later comment on its relationship to Theorem \ref{maintheorem:highsmall} (see
the ``Trouble'' paragraph below).

The second result of \cite{ChurchFarbPutman} says that
the dimension of $(\Q \otimes \St_n(K))_{\SL_n(\cO)}$ is at least
$(|\cl(\cO)|-1)^{n-1}$.  In fact, the proof in \cite{ChurchFarbPutman} actually
proves that the dimension of $(\Q \otimes \St_n(K))_{\GL_n(\cO)}$ is at least
$(|\cl(\cO)|-1)^{n-1}$, which is a stronger result.  The hypotheses of
Theorem \ref{maintheorem:highbig} are precisely those needed to ensure
that $\fD = \St_n(K)$, so Theorem \ref{maintheorem:highbig} immediately
follows.

\p{A tempting but wrong proof}
As we discussed above, Theorem \ref{maintheorem:highbig} follows from
Theorem \ref{maintheorem:dualizing} together with the work of Church--Farb--Putman,
so it only remains to discuss Theorem \ref{maintheorem:highsmall}.
In light of Theorem \ref{maintheorem:dualizing}, Theorem \ref{maintheorem:highsmall} is
equivalent to the assertion that under its assumptions, we have
\[(\St_n(K) \otimes \Q_{\chi})_{\GL_n(\cO)} = 0,\]
where $\chi\colon \GL_n(\cO) \rightarrow \{\pm 1\}$ is the composition of
the determinant homomorphism and the norm map $\cO^{\times} \rightarrow \{\pm 1\}$.
The Solomon--Tits theorem says that $\St_n(K)$ is generated by apartment classes
(see below for the definition),
and it is tempting to try to prove this by showing
that the images of these apartment classes in $\St_n(K) \otimes \Q_{\chi}$ vanish
in the $\GL_n(\cO)$-coinvariants.

The apartment classes $[A_B]$ are indexed by expressions $B = (L_1,\ldots,L_n)$
such that the $L_i$ are $1$-dimensional subspaces in $K^n$ with
$K^n = L_1 \oplus \cdots \oplus L_n$.  For such a $B$, let
$A_B$ denote the full subcomplex of $\Tits_n(K)$ spanned by the vertices
$\langle\text{$L_i$ $|$ $i \in I$}\rangle$, where $I \subset \{1,\ldots,n\}$
is a nonempty proper subset.  The complex $A_B$ is thus isomorphic to
the barycentric subdivision of the boundary of an $(n-1)$-simplex, and hence
is homeomorphic to an $(n-2)$-sphere.  The apartment class
is then the image
\[[A_B] \in \RH_{n-2}(\Tits_n(K)) = \St_n(K)\]
of the fundamental class of $A_B$.

The most straightforward way to show 
that $[A_B] \otimes 1 \in \St_n(K) \otimes \Q_{\chi}$ vanishes in
the $\GL_n(\cO)$-coinvariants would be to find some $g \in \GL_n(\cO)$ such
that $g([A_B]) = [A_B]$ but $\chi(g) = -1$; in the $\GL_n(\cO)$-coinvariants 
the elements $[A_B] \otimes 1$ and $g([A_B] \otimes 1) = -([A_B] \otimes 1)$
would then be equal.  For a general $B$, this seems
difficult.

However, it is easy to find such $g\in \GL_n(\cO)$ for the {\em integral apartments}, i.e.\ the
$B = (L_1,\ldots,L_n)$ such that 
\[\cO^n = (\cO^n \cap L_1) \oplus \cdots \oplus (\cO^n \cap L_n).\]
Indeed, for such a $B$ we can use a $g \in \GL_n(\cO)$ that scales
$L_1$ by an element of $\cO^{\times}$ whose norm is $-1$ and fixes
all the other $L_i$.  To prove Theorem \ref{maintheorem:highsmall}, it
would thus be enough to prove that $\St_n(K)$ is generated by integral apartments.

Ash--Rudolph \cite{AshRudolph} proved that if $\cO$ is Euclidean, then
$\St_n(K)$ is generated by integral apartments.  This was extended
in \cite{ChurchFarbPutman} to also include $\cO$ with a real
embedding and $\cl(\cO) = 0$.
Using a variant of the argument described above that avoids use of
the $\chi$-factor, \cite{ChurchFarbPutman} used this to prove their aforementioned vanishing
theorem.

\p{Trouble}
This leaves the cases of Theorem \ref{maintheorem:highsmall} that are
not consequences of Church--Farb--Putman's work, i.e.\ those where
$\cl(\cO) \neq 0$.  Unfortunately, \cite{ChurchFarbPutman} also proves
that if $\cl(\cO) \neq 0$, then $\St_n(K)$ is {\em not} generated by
integral apartments.  Finding a nice generating set for $\St_n(K)$ 
when $\cO$ is not Euclidean or a PID with a real embedding seems like a difficult problem, so we cannot
use one to prove Theorem \ref{maintheorem:highsmall}.

\p{What we do}
Our proof of Theorem \ref{maintheorem:highsmall} is thus by necessity entirely
different from the above sketch.  Recall that we are trying to prove that
$\HH_0(\GL_n(\cO);\St_n(K) \otimes \Q_{\chi})=0$.  Our proof of this has two
steps.  The first is to carefully study the action of $\GL_n(\cO)$ on the
simplicial chain complex of the Tits building to translate our theorem into
a sequence of results about the stable {\em untwisted} cohomology of $\GL_n(\cO)$.
The precise results we need are a bit technical, but the following special
case of one of them gives the general flavor.  Define $\CL_n(\cO)$ to
be the kernel of the homomorphism $\GL_n(\cO) \rightarrow \{\pm 1\}$ obtained
by composing the determinant and norm maps.
\begin{compactitem}
\item Let $r$ and $2s$ be the numbers of real and complex embeddings of $K$.
Then the action of $\GL_n(\cO)$ on its normal subgroup $\CL_n(\cO)$ induces
the trivial action on $\HH_k(\CL_n(\cO);\Q)$ for
$0 \leq k \leq \min(n,r+s)-1$.
\end{compactitem}
See Proposition \ref{proposition:projectiveactiontrivial} for a more general
statement.  For some range of $k$ (up to around $\frac{n}{2}$), this could be
easily deduced from Borel's \cite{BorelStable} computation of the stable rational
cohomology of $\SL_n(\cO)$.  However, we really need the whole range of values
of $k$ above -- even getting a result that was off by $1$ would cause everything
to break!  

There is a vast literature on homological stability results, and
we use some of the technology developed there in a rather non-standard way to
prove our theorem.  It is a bit surprising that while an optimal homological stability
theorem for $\SL_n(\cO)$ is not known, the technology that has been developed is
just barely strong enough to prove a result like the above that gives information
well outside the known stable range.

\p{Necessity of hypotheses}
Theorem \ref{maintheorem:highsmall} has three hypotheses:
\setlength{\parskip}{0pt}
\begin{compactitem}
\item $n$ is even, and
\item $\cO^{\times}$ contains an element of norm $-1$, and
\item $r+s \geq n$, where $r$ and $2s$ are the numbers of real and complex embeddings of $K$.
\end{compactitem}
The third of these is quite restrictive, and it is natural to wonder whether or not it is necessary.
For $\cO^{\times}$ to contain an element of norm $-1$, it is necessary for $\cO$ to have a real
embedding.  Lee and Szczarba \cite[Theorem 4.1]{LeeSzczarba} also 
proved\footnote{The reference \cite[Theorem 4.1]{LeeSzczarba} actually states
that if $\cO$ is Euclidean, then $\HH_0(\SL_n(\cO);\St_n(K)) = 0$ for $n \geq 2$.  Bieri--Eckmann duality
implies that this is equivalent to the vanishing of the rational cohomology of $\SL_n(\cO)$ in its virtual
cohomological dimension.  Using the Hochschild--Serre spectral sequence associated to the short
exact sequence $1 \rightarrow \SL_n(\cO) \rightarrow \GL_n(\cO) \rightarrow \cO^{\times} \rightarrow 1$,
this implies that $\HH^{\vcd}(\GL_n(\cO);\Q) = 0$.}
that if $\cO$ is Euclidean, then
$\HH^{\vcd}(\GL_n(\cO);\Q) = 0$ for $n \geq 2$.  The simplest number rings not covered by Lee--Szczarba's theorem for
which an element of norm $-1$ might exist are therefore real quadratic $\cO$ with positive class
numbers.  For these, the group $\GL_2(\cO)$ is covered by Theorem \ref{maintheorem:highsmall} (which says
that vanishing does not hold), while
the group $\GL_3(\cO)$ is covered by Theorem \ref{maintheorem:highbig}.  The smallest possible
interesting examples not covered by these known results are thus $\GL_4(\cO)$ for real quadratic
number rings $\cO$ with positive class number and elements of norm $-1$.  Unfortunately, these
are complicated enough that we are unaware of any computational data concerning them.\setlength{\parskip}{\baselineskip}

\p{Outline}
The two theorems above we must prove are Theorems \ref{maintheorem:dualizing}
and \ref{maintheorem:highsmall}.  We prove Theorem \ref{maintheorem:dualizing}
in \S \ref{section:identifydual}, and we start the proof of Theorem 
\ref{maintheorem:highsmall} in \S \ref{section:reductioni}, which reduces
it to results proved in subsequent sections.

\p{Acknowledgments}
The first author would like to thank Thomas Church and Benson Farb for many inspiring
conversations about the Steinberg module.  The second author would
like to thank Dave Witte Morris and Kevin Wortman for help
understanding the construction of Borel--Serre. We both would like to thank Jeremy Miller
and Peter Patzt for showing us Example \ref{example:level2} and asking us what
was going on with the virtual dualizing module for $\GL_n(\Z)$.  We also would like to thank
Will Sawin for showing us how to prove Lemma \ref{lemma:identifyclosure} below.  Finally,
we would like to thank Khalid Bou-Rabee for some helpful comments.

\section{Identifying the virtual dualizing module}
\label{section:identifydual}

In this section, we prove Theorem \ref{maintheorem:dualizing}.  There
are two subsections.  In \S \ref{section:reducedual}, we use
standard techniques to reduce ourselves to the existence of an action
of $\GL_n(\cO)$ on a space with appropriate properties.  This space
was constructed by Borel--Serre \cite{BorelSerreCorners}, but they did
not verify one key property we need. In \S
\ref{section:dualconstruction} we recall the construction of the space
and verify the key property.

\subsection{Reduction to a group action}
\label{section:reducedual}

In this section, we will show how Theorem \ref{maintheorem:dualizing}
follows from the following proposition, which is essentially due to Borel--Serre \cite{BorelSerreCorners}.
However, they did not verify all the properties in it, in particular conclusion (iv).

\begin{proposition}
  \label{proposition:spaceaction}
Let $\cO$ be the ring of integers in a number field $K$, and
let $r$ and $2s$ be the numbers of real and complex embeddings of $K$.  Let $\chi\colon \GL_n(\cO) \rightarrow \{\pm 1\}$
be the composition of the determinant with the norm map $\cO^{\times} \rightarrow \{\pm 1\}$.  
Then there exists a smooth contractible manifold with corners $\overline X$ such that the following hold.
\setlength{\parskip}{0pt}
\begin{compactenum}[(i)]
    \item The group $\GL_n(\cO)$ acts smoothly, properly discontinuously, and cocompactly on $\overline{X}$.
    \item The boundary $\partial \overline{X}$ is homotopy equivalent to the Tits building $\Tits_n(K)$, and the restriction
of the $\GL_n(\cO)$-action to $\partial \overline{X}$ corresponds to the usual action of $\GL_n(\cO)$ on $\Tits_n(K)$.
    \item The dimension of $\overline{X}$ is $d = r \binom{n+1}{2}+s n^2 - 1$.
    \item For $g\in \GL_n(\cO)$, the action of $g$ on $\overline{X}$ reverses orientation if and only
if $n$ is even and $\chi(g) = -1$.
\end{compactenum}
\setlength{\parskip}{\baselineskip}
\end{proposition}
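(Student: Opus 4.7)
The plan is to take $\overline{X}$ to be the Borel--Serre bordification of the natural symmetric space for $\GL_n(\cO)$. Let $P_n(\R)$ and $P_n(\C)$ denote the spaces of positive-definite symmetric (resp.\ Hermitian) $n\times n$ matrices, each carrying the action $A \mapsto g A g^{*}$, and set
\[\widetilde{X} = P_n(\R)^r \times P_n(\C)^s,\]
which admits a natural $\GL_n(\cO)$-action via the $r$ real and $s$ complex embeddings of $K$. Let $X = \widetilde{X}/\R_{>0}$, where $\R_{>0}$ acts diagonally by scaling on all factors. Then $X$ has dimension $d$, and properties (i), (ii), and (iii) all follow from Borel--Serre's construction of a bordification $\overline{X}$ of $X$ whose boundary strata are indexed by $K$-parabolic subgroups; I would recall their construction and cite \cite{BorelSerreCorners} for these three properties.

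The main new content is (iv), a purely topological orientation calculation. Since $\overline{X}$ is a manifold with corners whose interior is diffeomorphic to $X$, its orientation character coincides with that of the $\GL_n(\cO)$-action on $X$. Moreover, $\widetilde{X} \to X$ is a trivial $\R_{>0}$-principal bundle, and because the $\GL_n(\cO)$-action commutes with the scaling it acts by the identity along each fiber; hence it is orientation-preserving in the fiber direction, and the orientation character on $X$ equals the one on $\widetilde{X}$.

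To compute the orientation character on $\widetilde{X}$, I would analyze the real and complex factors separately. Each complex factor is acted on transitively by the connected group $\GL_n(\C)$, so its orientation character is trivial. On each real factor $P_n(\R) \subset \mathrm{Sym}_n(\R)$, the map $A \mapsto gAg^T$ is the restriction of a linear $\GL_n(\R)$-representation on the ambient vector space, so its Jacobian at every point equals the determinant of that linear map. A standard highest-weight (or direct matrix) computation identifies this representation with the symmetric square of the standard representation of $\GL_n$, whose determinant character is $\det^{n+1}$. Hence the orientation character on each real factor is $\mathrm{sign}(\det g)^{n+1}$, which is trivial when $n$ is odd and $\mathrm{sign}(\det g)$ when $n$ is even.

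Multiplying over the $r$ real factors and using the product decomposition
\[N_{K/\Q}(x) = \prod_{i=1}^{r} \sigma_i(x) \cdot \prod_{j=1}^{s} \abs{\tau_j(x)}^{2},\]
the orientation character of $\widetilde{X}$ is trivial for $n$ odd and equals $\mathrm{sign}(N_{K/\Q}(\det g)) = \chi(g)$ for $n$ even, giving (iv). The principal obstacle is managing signs carefully in the orientation computation and verifying that the $\R_{>0}$-quotient really does contribute trivially; once the real-factor character is identified as $\det^{n+1}$, everything else follows immediately from the norm formula.
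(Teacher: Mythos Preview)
Your proposal is correct and follows essentially the same architecture as the paper: both take $\overline{X}$ to be the Borel--Serre bordification of the symmetric space, cite \cite{BorelSerreCorners} for (i)--(iii), reduce (iv) from $\overline{X}$ to its interior and then to the unreduced space $\widetilde{X} \cong G/\cptK$, split into real and complex factors, and dispose of the complex factors by connectedness of $\GL_n(\C)$.

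The one genuine difference is the orientation computation on a single real factor. The paper works with the homogeneous-space model $\GL_n(\R)/\operatorname{O}(n)$, identifies the tangent space at the basepoint with $\mathfrak{gl}_n(\R)/\mathfrak{o}(n)$, and computes the conjugation action of the test element $e_{11}(-1)$ on an explicit basis, obtaining determinant $(-1)^{n-1}$. You instead use the linear model $P_n(\R) \subset \mathrm{Sym}_n(\R)$ and observe that $A \mapsto gAg^T$ is the restriction of the $\GL_n$-representation $\mathrm{Sym}^2(\text{standard})$, whose determinant character is $\det^{n+1}$. Your argument is a bit slicker since it avoids the Lie-algebra basis bookkeeping and gives the character for all $g$ at once rather than via a test element; the paper's version has the minor advantage of not invoking the eigenvalue identity for $\det(\mathrm{Sym}^2)$. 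Both yield the same answer and the rest of the deduction (multiplying over real places to get $\chi$) is identical.
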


We will explain how to extract Proposition \ref{proposition:spaceaction} from 
Borel--Serre's work in \S \ref{section:dualconstruction}.  Here we show how to use
it to derive Theorem \ref{maintheorem:dualizing}.  This derivation
is mostly standard, but we spell it out since we do not know a source that 
carefully deals with orientations and non-free actions.  Indeed, many sources talk
about virtual duality groups, but ignore the fact that they are also $\Q$-duality
groups if they have torsion, which is essential for our applications.

We need two lemmas.  The first is a tiny generalization of a familiar fact
about representations of groups over fields.  Recall that if $F$ is a group
and $\chi\colon F \rightarrow \{\pm 1\}$ is a homomorphism and $R$ is a commutative
ring, then $R_{\chi}$ is the $R[F]$-module whose underlying $R$-module is $R$
and where $g \in F$ acts as multiplication by $\chi(g)$.

\begin{lemma}
\label{lemma:split}
Let $F$ be a finite group, $\chi\colon F \rightarrow \{\pm 1\}$ be a homomorphism,
and $R$ be a commutative ring such that $|F|$ is invertible in $R$.  Then
the $R[F]$-module $R_{\chi}$ is projective.
\end{lemma}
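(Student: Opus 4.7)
The plan is to imitate the standard proof of Maschke's theorem by exhibiting $R_\chi$ as a direct summand of the free $R[F]$-module $R[F]$, via an explicit idempotent that averages against the sign character.

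Concretely, I would set
\[ e \;=\; \frac{1}{|F|} \sum_{g \in F} \chi(g)\, g \;\in\; R[F], \]
which makes sense because $|F|$ is invertible in $R$. The first step is to check the two key algebraic identities for $e$: namely, that $e^2 = e$ and that $h \cdot e = \chi(h)\, e$ for every $h \in F$. Both are short reindexings of the defining sum, using that $\chi$ is a homomorphism and that $\chi(h^{-1}) = \chi(h)$ since $\chi$ takes values in $\{\pm 1\}$.

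Given these identities, the second step is essentially automatic. Because $e$ is idempotent, we get an internal direct sum decomposition of $R[F]$-modules
\[ R[F] \;=\; R[F]\,e \;\oplus\; R[F]\,(1-e), \]
so $R[F]\,e$ is a direct summand of the free $R[F]$-module $R[F]$, hence projective. It remains to identify $R[F]\,e$ with $R_\chi$. For any $r = \sum a_g g \in R[F]$, the relation $g \cdot e = \chi(g)\,e$ gives $r\, e = \bigl(\sum a_g \chi(g)\bigr) e$, so $R[F]\,e = R \cdot e$ as an $R$-module, and the map $R_\chi \to R[F]\,e$ sending $1 \mapsto e$ is an $R[F]$-linear isomorphism (this is exactly the transformation rule $h \cdot e = \chi(h)\, e$).

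I do not anticipate any serious obstacle: the only slightly delicate points are handling the characteristic-$2$ case uniformly (there $\chi$ is forced to be trivial and $|F|$ must be odd, but the same formula for $e$ still works) and checking the sign bookkeeping in $h \cdot e = \chi(h)\, e$, both of which are routine.
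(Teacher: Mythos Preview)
Your proof is correct and essentially the same as the paper's: both exhibit $R_\chi$ as a direct summand of $R[F]$ using the element $e = \frac{1}{|F|}\sum_{g \in F}\chi(g)\,g$. The paper phrases this as a split surjection $\pi\colon R[F]\to R_\chi$, $\pi(g)=\chi(g)$, with splitting $1\mapsto e$, while you phrase it via the idempotent decomposition $R[F]=R[F]e\oplus R[F](1-e)$; the underlying computation is identical.
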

\begin{proof}
The surjection $\pi\colon R[F] \rightarrow R_{\chi}$ defined via the formula
\[\pi(g) = \chi(g) \in R \quad \quad (g \in F)\]
splits via the homomorphism $\iota\colon R_{\chi} \rightarrow R[F]$ taking $1 \in R_{\chi}$ to
\[\frac{1}{|F|} \sum_{g \in F} \chi(g) \cdot g.\]
Thus $R_{\chi}$ is a direct summand of the free $R[F]$-module $R[F]$, and is therefore projective.
\end{proof}

\begin{lemma}
\label{lemma:projectiveresolution}
Let $G$ be a group and let $Z$ be a contractible simplicial complex upon which $G$ acts properly discontinuously and
cocompactly.  Let $R$ be a commutative ring such that for all finite subgroups $F<G$, the order
$|F|$ is invertible in $R$.  Then the simplicial chain complex $\CC_{\bullet}(Z;R)$ is a
resolution of $R$ by finitely generated projective $R[G]$-modules.
\end{lemma}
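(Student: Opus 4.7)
The plan is to verify two separate claims: first, the augmentation $\CC_\bullet(Z;R) \to R \to 0$ is exact, giving a resolution of $R$; and second, each $\CC_k(Z;R)$ is finitely generated and projective as an $R[G]$-module. The first claim is immediate: $Z$ is a contractible simplicial complex, so its reduced simplicial homology with coefficients in $R$ vanishes, which is exactly the statement that the augmented simplicial chain complex is acyclic. All of the remaining work is in analyzing the $R[G]$-module structure on each $\CC_k(Z;R)$.

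For fixed $k$, I choose a set $\Sigma_k$ of representatives for the $G$-orbits of $k$-simplices of $Z$. Cocompactness of the action makes $\Sigma_k$ finite. For each $\sigma \in \Sigma_k$, let $G_\sigma < G$ denote the stabilizer of $\sigma$, which is finite because the action is properly discontinuous. An element of $G_\sigma$ may permute the vertices of $\sigma$ nontrivially, so one cannot simply identify the orbit's contribution with $R[G/G_\sigma]$. Instead, the sign of the permutation induced on the vertices of $\sigma$ defines a homomorphism $\chi_\sigma \colon G_\sigma \to \{\pm 1\}$, and the orbit contributes the induced module $\Ind_{G_\sigma}^G R_{\chi_\sigma}$. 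Summing over orbits yields an $R[G]$-module isomorphism
\[
\CC_k(Z; R) \;\cong\; \bigoplus_{\sigma \in \Sigma_k} \Ind_{G_\sigma}^G R_{\chi_\sigma}.
\]
Finiteness of $\Sigma_k$ gives finite generation.

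For projectivity, I apply Lemma \ref{lemma:split} to each finite stabilizer $G_\sigma$ (whose order is invertible in $R$ by hypothesis) to conclude that $R_{\chi_\sigma}$ is a projective $R[G_\sigma]$-module. Because $R[G]$ is a free right $R[G_\sigma]$-module on any set of coset representatives, the functor $\Ind_{G_\sigma}^G = R[G]\otimes_{R[G_\sigma]}(-)$ preserves projectivity, so each summand $\Ind_{G_\sigma}^G R_{\chi_\sigma}$ is a projective $R[G]$-module, as therefore is the finite direct sum. The only genuinely delicate point is the bookkeeping of the sign characters $\chi_\sigma$, which is precisely what Lemma \ref{lemma:split} is designed to handle: forgetting them and writing $\CC_k(Z;R) \cong \bigoplus_\sigma R[G/G_\sigma]$ would be incorrect whenever some stabilizer permutes the vertices of its simplex in an orientation-reversing way, and it is exactly in this twisted setting that the hypothesis on invertibility of $|G_\sigma|$ is used.
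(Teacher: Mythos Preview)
Your proof is correct and follows essentially the same approach as the paper: decompose $\CC_k(Z;R)$ into a finite sum of induced modules $\Ind_{G_\sigma}^G R_{\chi_\sigma}$ over orbit representatives, apply Lemma~\ref{lemma:split} to see each $R_{\chi_\sigma}$ is projective over $R[G_\sigma]$, and observe that induction preserves projectivity. The paper's argument is structurally identical, with the minor cosmetic difference that it phrases the induced module as the $R[G]$-submodule $M_\sigma$ generated by the basis element for $\sigma$ before identifying it with $\Ind_{G_\sigma}^G R_{\chi_\sigma}$.
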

\begin{proof}[{Proof (compare to \cite[Lemma 3.2]{ChurchPutman})}.]
The fact that $Z$ is contractible implies that $\CC_{\bullet}(Z;R)$ is a resolution of $R$.  We must
prove that each $\CC_n(Z;R)$ is a finitely generated projective $R[G]$-module.  For an oriented
$n$-simplex $\sigma$ of $Z$, let $M_{\sigma}$ be the $R[G]$-submodule of $\CC_n(Z;R)$ generated
by the basis element corresponding to $\sigma$.
Since $G$ acts cocompactly on $Z$, there are finitely many orbits of the action of $G$ on the set of $n$-simplices
of $Z$.  Let $\{\sigma(1),\ldots,\sigma(m)\}$ be a set of orbit representatives for this action.  Fixing
an orientation on each $\sigma(i)$, we have
\[\CC_n(Z;R) = \bigoplus_{i=1}^m M_{\sigma(i)}.\]
It is thus enough to prove that each $M_{\sigma(i)}$ is a projective $R[G]$-module.  Let $G_{\sigma(i)}$ be
the setwise stabilizer of $\sigma$.  Since the action of $G$ on $Z$ is properly discontinuous, $G_{\sigma(i)}$
is a finite subgroup of $G$.  The action of $G_{\sigma(i)}$ on $Z$ might reverse the orientation of $\sigma(i)$.
Let $\chi\colon G_{\sigma(i)} \rightarrow \{\pm 1\}$ be the homomorphism that records whether or not
an element of $G_{\sigma(i)}$ reverses the orientation of $\sigma(i)$.  We then have
\[M_{\sigma(i)} = \Ind_{G_{\sigma(i)}}^G R_{\chi}.\]
Lemma \ref{lemma:split} says that $R_{\chi}$ is a projective $R[G_{\sigma(i)}]$-module, i.e.\ a direct
summand of a free $R[G_{\sigma(i)}]$-module.  Since
\[\Ind_{G_{\sigma(i)}}^G R[G_{\sigma(i)}] \cong R[G],\]
it follows that $M_{\sigma(i)}$ is also a direct summand of a free $R[G]$-module, and is thus projective,
as desired.
\end{proof}

\begin{proof}[Proof of Theorem \ref{maintheorem:dualizing}, assuming
  Proposition \ref{proposition:spaceaction}]
Let us first recall what must be proved.  This requires introducing a large amount of notation:
\setlength{\parskip}{0pt}
\begin{compactitem}
\item Let $\cO$ be the ring of integers in a number field $K$.
\item Let $\chi\colon \GL_n(\cO) \rightarrow \{\pm 1\}$ be the
composition of the determinant homomorphism and the norm map $\cO^{\times} \rightarrow \{\pm 1\}$.
\item Let $r$ and $2s$ be the numbers of real and complex embeddings of $K$.
\item Let $\vcd = r\binom{n+1}{2}+s n^2 - n$.
\item Let $G$ be a finite-index subgroup of $\GL_n(\cO)$.
\item Let $R$ be a commutative ring such that for all finite subgroups $F<G$, the order $|F|$ is invertible in $R$.
\end{compactitem}
We must prove that $G$ is an $R$-duality group of dimension $\vcd$
with $R$-dualizing module $\St_n(K) \otimes (R_{\chi})^{\otimes(n-1)}$.  Since
this purported dualizing module is a free $R$-module, the standard theory
of $R$-duality group (see, e.g.\ \cite[\S 9]{BieriNotes}) says that this is
equivalent to showing that\setlength{\parskip}{\baselineskip}
\begin{equation}
\label{eqn:toidentify}
\HH^k(G;R[G]) \cong \begin{cases}
\St_n(K) \otimes (R_{\chi})^{\otimes(n-1)} & \text{if $k = \vcd$},\\
0 & \text{otherwise}
\end{cases}
\end{equation}
for all $k \geq 0$.

Let $X$ and $\overline{X}$ be as in Proposition \ref{proposition:spaceaction}, so $\overline{X}$
is a
\[d = r \binom{n+1}{2}+s n^2 - 1\]
dimensional manifold with boundary.
Fix a $\GL_n(\cO)$-equivariant triangulation of $\overline{X}$.  Lemma \ref{lemma:projectiveresolution} implies 
that the simplicial chain complex
$\CC_{\bullet}(\overline{X};R)$ is a resolution of $R$ by finitely-generated projective $R[G]$-modules.

The proof of \cite[Proposition VIII.7.5]{BrownCohomology} now shows that
\begin{equation}
\label{eqn:identify1}
\HH^{k}(G;R[G]) \cong \HH^{k}_c(\overline{X};R).
\end{equation}
Let $R_{\text{or}}$ be the orientation module for the action of $\GL_n(\cO)$ on $\overline{X}$, so
$R_{\text{or}} = R$ and elements of $\GL_n(\cO)$ act on $R_{\text{or}}$ by $\pm 1$ depending on
whether or not they reverse the orientation of $\overline{X}$.
Conclusion (iv) of Proposition \ref{proposition:spaceaction} implies that
\begin{equation}
\label{eqn:identify2}
R_{\text{or}} \cong (R_{\chi})^{\otimes(n-1)}.
\end{equation} 
Applying Poincar\'e-Lefschetz duality, we see that as a $G$-module, we have
\begin{equation}
\label{eqn:identify3}
\HH_c^k(\overline{X};R)
\cong
\HH_{d-k}(\overline{X},\partial \overline{X};R) \otimes R_{\text{or}}.
\end{equation}
Using the fact that $\overline{X}$ is contractible, the long exact sequence of a pair gives
\begin{equation}
\label{eqn:identify4}
\HH_{d-k}(\overline{X},\partial \overline{X};R) \otimes R_{\text{or}} 
\cong
\RH_{d-k-1}(\partial \overline{X};R) \otimes R_{\text{or}} \cong \RH_{d-k-1}(\Tits_n(K)) \otimes R_{\text{or}}.
\end{equation}
Since the Tits building $\Tits_n(K)$ is homotopy equivalent to a wedge of $(n-2)$-spheres and
\[d - \vcd - 1 = \left(r \binom{n+1}{2}+s n^2 - 1\right) - \left(r\binom{n+1}{2}+s n^2 - n\right) -1 = n-2,\]
we have
\begin{equation}
\label{eqn:identify5}
\RH_{d-k-1}(\Tits_n(K)) \cong \begin{cases}
\St_n(K) & \text{if $k = \vcd$},\\
0 & \text{otherwise}.
\end{cases}
\end{equation}
Combining \eqref{eqn:identify1}--\eqref{eqn:identify5}, we obtain \eqref{eqn:toidentify}.
\end{proof}

\subsection{The Borel--Serre bordification}
\label{section:dualconstruction}

Let $\cO$ be the ring of integers in an algebraic number field $K$.
A space $\overline{X}$ satisfying the conclusions of Proposition \ref{proposition:spaceaction} was
constructed by Borel--Serre \cite{BorelSerreCorners}, who proved that
it satisfies the first three conclusions of that proposition.  In this
section, we recall their construction and verify that it also satisfies
the fourth conclusion.

\p{Algebraic groups setup}
Let $\bfG = R_{K/\Q}(\GL_n)$ be the $\Q$-algebraic group obtained as
the restriction of scalars of the $K$-algebraic group $\GL_n$.
We thus have $\bfG(\Q) \cong \GL_n(K)$ and $\bfG(\Z) \cong \GL_n(\cO)$. 
Let $r$ and $2s$ be the numbers of real and complex embeddings of $K$ and
let
\[G = \bfG(\R) = \GL_n(K \otimes_{\Q} \R) \cong \prod_{i=1}^{r} \GL_n(\R) \times \prod_{j=1}^s \GL_n(\C).\]
The group $\GL_n(\cO)$ is thus a discrete subgroup of the real Lie group $G$.
Let
\[
  \cptK = \prod_{i=1}^{r} \operatorname{O}(n) \times \prod_{j=1}^s \operatorname{U}(n),
\]
so $\cptK$ is a maximal compact subgroup of $G$.

\p{Center}
Recall that a {\em bordification} of a smooth manifold $Y$ is a smooth manifold with
corners $\overline{Y}$ such that $\Interior(\overline{Y}) = Y$.  The space
$\overline{X}$ constructed by Borel--Serre is a bordification of an appropriate
symmetric space $X$.
If we were working with a semisimple group like $\SL_n$, then $X$ would simply be $G/\cptK$.
To deal with a reductive group like $\GL_n$, we will have to further quotient $G/\cptK$ by 
the following subgroup of the center.  The center of the $K$-algebraic group $\GL_n$ is the multiplicative group $\bbG_m$, so
$Z(\bfG) = R_{K/\Q}(\bbG_m)$.  Let $\bfS$ be the maximal $\Q$-split torus in $Z(\bfG)$, so
\[\bfS(\Q) = \Q^{\times} < K^{\times} = Z(\bfG)(\Q).\]
Set $S = \bfS(\R) < G$.  Letting $\Id$ be the $n \times n$ identity matrix, we have
\[S = \Set{$(a\Id,\dotsc, a\Id)$}{$a\in \R^{\times}$} \leq Z(G).\]

\p{Symmetric space}
Let $X$ be the smooth manifold $G/(\cptK \cdot S)$.  The space $X$ is a symmetric space
of noncompact type, and is thus contractible.  This can be seen in an elementary
way using the Gram--Schmidt orthogonalization process.  Its dimension is
\begin{align}
\label{eqn:calcdim}
\dim(X) &= r(\dim(\GL_n(\R))-\dim(\operatorname{O}(n))) + s(\dim(\GL_n(\C))-\dim(\operatorname{U}(n)))-1 \\
&= r(n^2-\frac{n(n-1)}{2})+s(2n^2-n^2)-1 = r \frac{n(n+1)}{2} + s n^2 -1. \notag
\end{align}
Since $\GL_n(\cO) \cap \cptK \cdot S$ is
finite, the smooth and properly discontinuous action of $\GL_n(\cO)$
on $G$ by left multiplication descends to a smooth and properly
discontinuous action of $\GL_n(\cO)$ on $X$.  

\p{Borel--Serre bordification}
Borel--Serre \cite{BorelSerreCorners}
prove that $X$ has the following properties.

\begin{theorem}[{Borel--Serre, \cite{BorelSerreCorners}}]
\label{theorem:borelserre}
Let the notation be as above.  The manifold $X$ has a bordification $\overline{X}$ with the following
properties.
\setlength{\parskip}{0pt}
\begin{compactenum}[(i)]
    \item\label{item:actionnice} The action of $\GL_n(\cO)$ on $X$ extends to a smooth, properly discontinuous, and cocompact
action on $\overline{X}$.
    \item\label{item:rightboundary} The boundary $\partial \overline{X}$ is homotopy equivalent to the Tits building $\Tits_n(K)$, and the restriction
of the $\GL_n(\cO)$-action to $\partial \overline{X}$ corresponds to the usual action of $\GL_n(\cO)$ on $\Tits_n(K)$.
\end{compactenum}
\setlength{\parskip}{\baselineskip}
\end{theorem}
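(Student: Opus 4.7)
The plan is to follow the original construction of Borel--Serre in \cite{BorelSerreCorners}, which attaches boundary faces to $X$ indexed by proper $\Q$-parabolic subgroups of $\bfG$. The combinatorics of this attachment is encoded by the Tits building, and so conclusions (i) and (ii) fall out of the construction once the correct face $e(\bfP)$ is specified for each parabolic $\bfP$ and the resulting gluing is shown to be a smooth manifold with corners.

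First I would recall the relevant structure theory: the proper $\Q$-parabolic subgroups of $\bfG = R_{K/\Q}(\GL_n)$ correspond bijectively to proper flags in $K^n$, so the poset of proper $\Q$-parabolics, partially ordered by inclusion, has geometric realization $\Tits_n(K)$. For each such $\bfP$, I would invoke the rational Langlands decomposition of the real points $\bfP(\R) = N_\bfP A_\bfP M_\bfP$, where $N_\bfP$ is the unipotent radical, $A_\bfP$ is the identity component of the $\R$-points of the maximal $\Q$-split torus in the center of a Levi subgroup, and $M_\bfP$ is a reductive complement. This induces a diffeomorphism $X \cong N_\bfP \times A_\bfP' \times e(\bfP)$, where $A_\bfP' = A_\bfP/S$ is an open simplicial cone and $e(\bfP)$ is the symmetric space associated to $M_\bfP$ modulo its maximal compact.

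Next I would form $\overline{X}$ by replacing each cone $A_\bfP'$ by its closure as a closed positive orthant, gluing compatibly along the inclusions $\bfP \subseteq \bfQ$ of parabolics (which correspond to faces of the orthants). Locally, $\overline{X}$ is diffeomorphic to a product $\R^{d-k} \times \R^k_{\geq 0}$, so it is a smooth manifold with corners whose interior is $X$ and whose codimension-$k$ strata are indexed by chains of parabolics of length $k$. The closed face attached to a maximal parabolic is the boundary component $e(\bfP)$, and the combinatorics of strata incidences is dual to the face lattice of $\Tits_n(K)$. For conclusion (i), the $\GL_n(\cO)$-action on $X$ extends to $\overline{X}$ because $\GL_n(\cO)$ permutes $\Q$-parabolics and their Langlands data; the action is properly discontinuous because boundary point stabilizers are arithmetic subgroups of parabolics, and cocompactness is the classical consequence of reduction theory via Siegel sets. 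For conclusion (ii), the closed faces $\overline{e(\bfP)}$ form a good cover of $\partial \overline{X}$ with contractible pieces, whose nerve is the flag poset, so the nerve lemma identifies $\partial\overline{X} \simeq \Tits_n(K)$ equivariantly.

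The main obstacle, and the bulk of the work in \cite{BorelSerreCorners}, is verifying that the gluing really produces a manifold with corners: this requires consistent choices of corner charts along all strata simultaneously, and hinges on the geometry of the geodesic action of $A_\bfP$ and its degeneration as one approaches the boundary. Once this smooth structure is established, properly discontinuous cocompact action and the nerve-lemma description of the boundary follow by relatively formal arguments. Since the full proof is carried out in \cite{BorelSerreCorners}, I would cite those results for (i) and (ii), reserving the verification of the orientation conclusion (iv) of Proposition \ref{proposition:spaceaction} for a separate argument that exploits the explicit description of $X$ and of the action of $\GL_n(\cO)$ on the top exterior power of the tangent bundle.
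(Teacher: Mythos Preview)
Your proposal is correct and takes essentially the same approach as the paper: both defer entirely to \cite{BorelSerreCorners} for the construction and verification of properties (i) and (ii). The only difference is one of presentation---you sketch the actual Borel--Serre construction (Langlands decomposition, attaching faces $e(\bfP)$, reduction theory for cocompactness, nerve lemma for the boundary), whereas the paper simply records that $X$ is a space of type $S$--$\Q$ and cites the precise sections of \cite{BorelSerreCorners} (namely 7.1, 9.3, and 8.4.2) where each conclusion is proved.
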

\begin{proof}
The space $X$ is a ``space of type
$S-\Q$ for $G$'' in the language of Borel--Serre; see
\cite[2.5(2)]{BorelSerreCorners}. Borel--Serre construct a
manifold with corners $\overline{X}$ containing $X$ as an open
submanifold; see \cite[7.1]{BorelSerreCorners}. Their construction
satisfies (\ref{item:actionnice}) by \cite[9.3]{BorelSerreCorners} and
satisfies (\ref{item:rightboundary}) by
\cite[8.4.2]{BorelSerreCorners}. 
\end{proof}

\p{What remains}
Theorem \ref{theorem:borelserre} says that the bordification $\overline{X}$ satisfies
the first two conclusions of Proposition \ref{proposition:spaceaction}, and
\eqref{eqn:calcdim} shows that $\overline{X}$ satisfies the third conclusion.
To prove Proposition \ref{proposition:spaceaction}, we must therefore only
verify the fourth conclusion, which identifies the elements of $\GL_n(\cO)$ that
preserve the orientation of $\overline{X}$.  Since a diffeomorphism of a smooth manifold with
corners is orientation-preserving if and only if its restriction to the interior
is orientation-preserving, it is enough to determine which elements of $\GL_n(\cO)$
preserve the orientation of $X$.  The advantage of doing this is that the whole Lie group
$G$ acts on $X$, and in fact we will determine which elements of $G$ preserve the
orientation of $X$.

Define $\chi\colon G \rightarrow \R$ via the formula
\[\chi(g_1,\ldots,g_r,g'_1,\ldots,g'_{s}) = \det(g_1) \cdots \det(g_r) \cdot |\det(g'_{1})| \cdots |\det(g'_{s})|.\]
The restriction of $\chi$ to $\GL_n(K) \subset \GL_n(K)$ (and hence to $\GL_n(\cO)$) is
is the composition of the determinant homomorphism
$\GL_n(K) \rightarrow K^{\times}$ with the norm map $K^{\times} \rightarrow \Q^{\times}$.  From this,
we see that the following lemma generalizes the fourth conclusion of Proposition \ref{proposition:spaceaction}.
 
\begin{lemma}
\label{lemma:orientation}
Let the notation be as above.  For $g\in G$, the action of $g$ on $X$ reverses orientation if and only
if $n$ is even and $\chi(g) < 0$.
\end{lemma}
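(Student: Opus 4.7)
The plan is to analyze the \emph{orientation character} $\omega\colon G \to \{\pm 1\}$, defined by $\omega(g) = +1$ if left multiplication by $g$ preserves a chosen orientation on $X$ and $\omega(g) = -1$ otherwise. Since $X$ is a connected orientable manifold (it is diffeomorphic to Euclidean space via the Cartan decomposition) and left multiplication is smooth, $\omega$ is a continuous homomorphism and hence factors through $\pi_0(G)$. Because $\GL_n(\C)$ is connected while each $\GL_n(\R)$-factor has two components distinguished by the sign of the determinant, $\pi_0(G) \cong (\Z/2)^r$; in particular the complex factors are irrelevant. By multiplicativity of $\omega$ and the evident symmetry among the real embeddings, it therefore suffices to compute $\omega$ on the single element $R = (r, I, \ldots, I)$, where $r = \mathrm{diag}(-1, 1, \ldots, 1) \in \operatorname{O}(n)$ sits in the first real slot and identity matrices fill the rest.

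The first key step is to observe that $r \in \operatorname{O}(n)$ implies $R \in \cptK$, so $R$ fixes the basepoint $x_0 = [e] \in X$; hence $\omega(R)$ equals the sign of $\det (dL_R)_{x_0}$ acting on $T_{x_0}X$. The second step is to identify this tangent space using the Cartan decomposition. Writing $\mathfrak{g} = \mathfrak{k} \oplus \mathfrak{p}$ with $\mathfrak{p}$ the sum of the spaces of real symmetric matrices in the $\GL_n(\R)$-factors and Hermitian matrices in the $\GL_n(\C)$-factors, and letting $\mathfrak{s} = \R \cdot (I, \ldots, I) \subset \mathfrak{p}$ be the Lie algebra of $S$, one has the standard identification $T_{x_0}X \cong \mathfrak{p}/\mathfrak{s}$, under which $(dL_R)_{x_0}$ is the descent of $\operatorname{Ad}(R)$.

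The main computation is then elementary: $\operatorname{Ad}(R)$ acts trivially on every summand of $\mathfrak{p}$ except the first copy of $\operatorname{Sym}_n(\R)$, where it sends $H \mapsto rHr$. Inspecting entries shows this conjugation negates exactly the $n-1$ independent entries $H_{1k}$ for $k = 2, \ldots, n$ and fixes all others; in particular it fixes $\mathfrak{s}$. Hence $\det(\operatorname{Ad}(R)|_{\mathfrak{p}/\mathfrak{s}}) = (-1)^{n-1}$, giving $\omega(R) = (-1)^{n-1}$. Assembling this across all real factors yields $\omega(g) = (\mathrm{sign}\,\chi(g))^{n-1}$, which equals $-1$ precisely when $n$ is even and $\chi(g) < 0$, as required.

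I expect the main work to lie not in the sign calculation itself but in the careful setup of the three ingredients feeding into it: (i) verifying that $X$ is connected and orientable so that $\omega$ is well-defined, (ii) establishing the identification $T_{x_0}X \cong \mathfrak{p}/\mathfrak{s}$ for the reductive (non-semisimple) homogeneous space $G/(\cptK \cdot S)$, and (iii) checking that the derivative of $L_R$ at its fixed point agrees with $\operatorname{Ad}(R)$ on this quotient. Each is standard material for reductive homogeneous spaces, but must be phrased so as to accommodate the extra central factor $S$; once this bookkeeping is done, the lemma reduces to the single sign $(-1)^{n-1}$.
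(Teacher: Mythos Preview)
Your proof is correct and follows essentially the same approach as the paper: both reduce to $\pi_0(G)\cong(\Z/2)^r$, pick the representative $\mathrm{diag}(-1,1,\ldots,1)$ in a single real $\operatorname{O}(n)$-factor, and compute that its adjoint action on the tangent space at the basepoint negates exactly the $n-1$ off-diagonal entries in the first row, yielding $(-1)^{n-1}$. The only structural difference is that the paper first splits off the center via a diffeomorphism $G/\cptK\cong \R_{>0}\times X$ (so as to work with the honest product $\prod \GL_n(\R)/\operatorname{O}(n)\times\prod \GL_n(\C)/\operatorname{U}(n)$), whereas you handle the center directly by passing to $\mathfrak{p}/\mathfrak{s}$; both routes lead to the same elementary sign calculation.
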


\noindent
Once we prove Lemma \ref{lemma:orientation}, the proof of Proposition \ref{proposition:spaceaction}
will be complete.  Before we do this, we must discuss two preliminary results.  

\p{Homogeneous spaces and orientations}
The first is
the following lemma.  To interpret it, observe that if $M$ is a connected
orientable manifold, then the question of whether a homeomorphism of $M$ preserves the orientation
is independent of a choice of orientation.

\begin{lemma}
\label{lemma:orientsetup}
Let $H$ be a Lie group and let $M$ be smooth connected orientable
homogeneous space for $H$.  Fix a basepoint $p \in M$.
Then the action of $H$ on $M$ preserves the orientation
of $M$ if and only if the stabilizer $H_p$ preserves the orientation
of the tangent space $T_p M$.
\end{lemma}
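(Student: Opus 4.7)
The plan is to introduce the sign homomorphism $\epsilon\colon H \to \{\pm 1\}$ that, after fixing an orientation on $M$, records whether each $h \in H$ acts on $M$ in an orientation-preserving way, and then to show that $\epsilon \equiv 1$ is equivalent to the restriction $\epsilon|_{H_p} \equiv 1$. That the latter restriction captures the tangent-space condition in the lemma is immediate: since $M$ is connected, a diffeomorphism fixing $p$ is globally orientation-preserving iff its derivative at $p$ preserves the induced orientation on $T_p M$. So the content of the lemma is the claim that the restriction to $H_p$ controls all of $H$.

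The forward implication is trivial. For the converse I would proceed via the identity component $H^0$ in two steps. First I would show that $H^0 \subseteq \ker(\epsilon)$: given $h_0 \in H^0$, a smooth path $\gamma\colon [0,1] \to H^0$ from the identity to $h_0$ yields a continuous family of diffeomorphisms $L_{\gamma(t)}$ of $M$, and the function $t \mapsto \epsilon(\gamma(t)) \in \{\pm 1\}$ is continuous (the orientation class of an invertible linear map depends continuously on the matrix), hence constant, giving $\epsilon(h_0) = \epsilon(e) = 1$. This step uses no hypothesis on $H_p$.

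Second, I would use that $H^0$ acts transitively on $M$. This is standard but worth spelling out: the orbit map $H \to M$ is a submersion at $e$, so $H^0 \cdot p$ is open in $M$; the same argument applied at other basepoints shows every $H^0$-orbit is open, so connectedness of $M$ forces a single orbit. Given transitivity, for any $h \in H$ I can choose $h_0 \in H^0$ with $h_0 p = h p$ and set $k := h_0^{-1} h \in H_p$. The factorization $h = h_0 k$ combined with $\epsilon(h_0) = 1$ (from the first step) and $\epsilon(k) = 1$ (from the hypothesis, interpreted globally via the tangent-space reduction above) yields $\epsilon(h) = 1$.

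The main obstacle, insofar as there is one, is the transitivity of the identity component, but this is a routine consequence of the submersion property of the orbit map together with connectedness of $M$. Everything else is bookkeeping about the sign homomorphism and continuity of the orientation class along a path of diffeomorphisms.
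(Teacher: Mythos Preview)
Your proof is correct, but it takes a genuinely different route from the paper's. The paper argues the converse direction by \emph{constructing} an $H$-invariant orientation directly: fix an orientation $\omega$ on $T_pM$, and for each $q\in M$ declare the orientation on $T_qM$ to be $h_*\omega$ for any $h\in H$ with $h(p)=q$. The hypothesis that $H_p$ preserves the orientation on $T_pM$ is exactly what makes this well-defined (if $h(p)=h'(p)=q$ then $h^{-1}h'\in H_p$), and $H$-invariance is then immediate from the construction.

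Your argument instead fixes an orientation in advance, forms the sign homomorphism $\epsilon$, and shows $\ker(\epsilon)=H$ by factoring an arbitrary $h$ as (element of $H^0$)$\cdot$(element of $H_p$), using transitivity of $H^0$ on the connected $M$. This works, but it invokes more Lie-theoretic machinery (submersivity of the orbit map, openness of $H^0$-orbits) than is needed. The paper's transport construction is shorter and uses only the transitivity of $H$ itself; your approach has the mild advantage of making explicit that the obstruction to orientation-preservation lives in $\pi_0(H)$, which is sometimes a useful perspective. One small imprecision: your parenthetical justification for continuity of $t\mapsto\epsilon(\gamma(t))$ (``the orientation class of an invertible linear map depends continuously on the matrix'') is awkward because $D\gamma(t)|_p$ maps between varying tangent spaces; cleaner is to pull back a global orientation form and track the sign of the resulting nowhere-vanishing function.
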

\begin{proof}
If the action of $H$ on $M$ preserves the orientation of $M$, then clearly
$H_p$ preserves the orientation of $T_p M$.  We must prove the converse.  Assume
that $H_p$ preserves the orientation of $T_p M$.
Since $M$ is connected, it is enough to construct an $H$-invariant orientation
of $M$.  For this, let $\omega$ be an orientation on $T_p M$.
We can then define an orientation on $M$ by letting the orientation on $T_q M$ for $q \in M$
be $h_{\ast}(\omega)$, where $h \in H$ satisfies $h(p) = q$.  This is
independent of the choice of $h$, and clearly gives a $H$-invariant
orientation on $M$.
\end{proof}

\p{Ignoring the center}
Our second lemma will allow us to ignore the difference between $X = G/(\cptK \cdot S)$ and $G/\cptK$:

\begin{lemma}
\label{lemma:ignorecenter}
Let the notation be as above.  For $g \in G$, the action of $g$ on $X$ preserves orientation
if and only if the action of $g$ on $G/\cptK$ preserves orientation.
\end{lemma}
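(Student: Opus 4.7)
\emph{Proof plan.} The plan is to exhibit a $G$-equivariant diffeomorphism between $G/\cptK$ and $X \times \R_{>0}$ in which $G$ acts on the $\R_{>0}$ factor by translations. Since such translations preserve orientation, this immediately reduces the question of orientation-preservation on $G/\cptK$ to the corresponding question on $X$.

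First I would identify the fiber of the projection $\pi\colon G/\cptK \to G/(\cptK \cdot S) = X$. The fiber over the basepoint is $(\cptK \cdot S)/\cptK \cong S/(S \cap \cptK)$. The intersection $S \cap \cptK$ consists of $a\Id$ with $a \in \R^\times$ and $|a|=1$, which equals $\{\pm\Id\}$. Hence the fiber is $S/\{\pm\Id\} \cong \R_{>0}$.

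Next I would split this bundle $G$-equivariantly by writing $G$ as an internal direct product. The composition $|\chi|\colon G \to \R_{>0}$ is a continuous group homomorphism; set $G^1 = \ker|\chi|$ and $S^+ = \{(a\Id,\dotsc,a\Id) : a > 0\}$, the identity component of $S$. Since every element of $\cptK$ preserves the relevant Euclidean or Hermitian norms, $\cptK \subset G^1$. Moreover $S^+ \cap G^1 = \{\Id\}$ and $S^+$ is central in $G$, so $G = G^1 \times S^+$ as Lie groups. Since $\{\pm\Id\} \subset \cptK$, we have $\cptK \cdot S = \cptK \cdot S^+$ with $S^+ \cap \cptK = \{\Id\}$, and this product structure descends to a $G$-equivariant diffeomorphism
\[
G/\cptK \;\cong\; (G^1/\cptK) \times S^+ \;=\; X \times S^+,
\]
in which any $h \in G$ with decomposition $h = (h^1, a_h) \in G^1 \times S^+$ acts as the usual action on the $X$-factor and as translation by $a_h > 0$ on $S^+ \cong \R_{>0}$.

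The remainder is immediate: translation by a positive real on $\R_{>0}$ preserves orientation, so $h$ preserves the orientation of $X \times \R_{>0}$ if and only if it preserves the orientation of $X$. I do not anticipate a serious obstacle; the only point requiring care is the verification of the $G$-equivariant product decomposition, which ultimately rests on the containments $\cptK \subset G^1$ and $\{\pm\Id\} \subset \cptK$ together with the centrality of $S^+$.
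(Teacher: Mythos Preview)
Your proof is correct and follows essentially the same approach as the paper: both arguments exhibit a $G$-equivariant diffeomorphism $G/\cptK \cong X \times \R_{>0}$ in which $G$ acts on the $\R_{>0}$ factor via an orientation-preserving map. The paper packages this as an abstract homogeneous-space computation (identifying $\R_{>0}$ with $G/\Ker(\Psi)$ for $\Psi = |\chi|$ and computing $\Ker(\Psi) \cap (\cptK \cdot S) = \cptK$), whereas you obtain it from the internal direct product decomposition $G = G^1 \times S^+$; these are two presentations of the same idea.
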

\begin{proof}
Define $\Psi\colon G \rightarrow \R_{>0}$ via the formula
\[\Psi(g_1,\ldots,g_{r+s}) = \abs{\det g_1} \cdots \abs{\det g_{r+s}}.\]
Via $\Psi$, the group $G$ acts in an orientation-preserving way on $\R_{>0}$.  To prove the
lemma, it is thus enough to prove that there is a $G$-equivariant homeomorphism
\[\R_{>0} \times X \cong G/\cptK.\]
To do this, it is enough to prove that $\R_{>0} \times X$ is the homogeneous $G$-space $G/\cptK$.

Since $\Psi(S) = \R_{>0}$, the subgroup $S<G$ acts transitively on $\R_{>0}$.  The subgroup $S$
lies in the center of $G$, so $S$ acts trivially on $X = G/(\cptK \cdot S)$.  Together, these
facts imply that $\R_{>0} \times X$ is a homogeneous $G$-space.  As
a $G$-space, $\R_{>0}$ is isomorphic to $G/\Ker(\Psi)$.  We conclude that
$\R_{>0} \times X$ is isomorphic as a $G$-space to $G$ modulo
\[\Ker(\Psi) \cap (\cptK \cdot S) = \cptK \cdot (\Ker(\Psi) \cap S) = \cptK \cdot (S \cap \cptK) = \cptK.\]
Here we are using the fact that $\cptK \subset \Ker(\Psi)$ and that $S$ is central.  The lemma follows.
\end{proof}

\p{Completing the proof}
We finally prove Lemma \ref{lemma:orientation}, thus completing the proof of Proposition
\ref{proposition:spaceaction}.

\begin{proof}[Proof of Lemma \ref{lemma:orientation}]
By Lemma \ref{lemma:ignorecenter}, it is enough to prove that the action of $g \in G$ on $G/\cptK$
reverses orientation if and only if $n$ is even and $\chi(g) < 0$.  By definition,
\[G = \left(\prod_{i=1}^r \GL_n(\R)\right) \times
\left(\prod_{j=1}^s \GL_n(\C)\right)\]
and
\[G/\cptK = \left(\prod_{i=1}^r \frac{\GL_n(\R)}{\operatorname{O}(n)}\right) \times
\left(\prod_{j=1}^s \frac{\GL_n(\C)}{\operatorname{U}(n)}\right).\]
The action of $G$ on $G/\cptK$ respects these product decompositions.  It follows that for
$g = (g_1,\ldots,g_{r},g'_{1},\ldots,g'_{s}) \in G$,
the action of $g$ on $G/\cptK$ reverses orientation if and only if
\begin{align*}
\#&\Set{$1 \leq i \leq r$}{$g_i$ reverses orientation of $\frac{\GL_n(\R)}{\operatorname{O}(n)}$}\\
&+ \#
\Set{$1 \leq j \leq s$}{$g'_j$ reverses orientation of $\frac{\GL_n(\C)}{\operatorname{U}(n)}$}
\end{align*}
is odd.  Since $\GL_n(\C)$ is connected, the action of $g'_j$ will preserve orientation for all
$1 \leq j \leq s$.  What is more, since
\[\chi(g_1,\ldots,g_r,g'_1,\ldots,g'_{s}) = \det(g_1) \cdots \det(g_r) \cdot |\det(g'_{1})| \cdots |\det(g'_{s})|,\]
we see that $\chi(g) < 0$ if and only if
\[\#\Set{$1 \leq i \leq r$}{$\det(g_i)<0$}\]
is odd.  We conclude that to prove the lemma, it is enough to prove the following claim.

\begin{claim}
The subgroup of $\GL_n(\R)$ consisting of elements that fix the orientation 
of 
\[Y = \frac{\GL_n(\R)}{\operatorname{O}(n)}\]
is $\GL_n^{>0}(\R)$ if $n$ is even and is $\GL_n(\R)$ if $n$ is odd.
\end{claim}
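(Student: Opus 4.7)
The plan is to reduce the orientation question on $Y=\GL_n(\R)/\operatorname{O}(n)$ to a single eigenvalue-counting calculation on a tangent space.

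The first step is to identify $Y$ with the space $P$ of positive-definite symmetric $n\times n$ real matrices via the standard map $g\operatorname{O}(n)\mapsto gg^{T}$ (polar decomposition). Under this identification the left action becomes $h\cdot A=hAh^{T}$, the basepoint $p=e\operatorname{O}(n)$ corresponds to $I\in P$, and $Y$ is realized as a convex open subset of $\operatorname{Sym}_n(\R)$, hence is connected and orientable. The map sending $h\in\GL_n(\R)$ to the orientation sign of the action of $h$ on $Y$ is a continuous homomorphism to $\{\pm1\}$, so its kernel is open and must contain the identity component $\GL_n^{>0}(\R)$. Consequently, the claim reduces to computing the orientation sign of a single element of negative determinant.

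The next step is to pick this element carefully so the calculation happens at a fixed point: take $g=\operatorname{diag}(-1,1,\ldots,1)\in \operatorname{O}(n)$, which fixes $I\in P$. Because $A\mapsto gAg^{T}=gAg$ is already linear in $A$, its differential at $I$ on $T_{I}P=\operatorname{Sym}_n(\R)$ is literally the linear map $\phi\colon B\mapsto gBg$. In particular $g$ acts orientation-preservingly on $Y$ if and only if $\det\phi>0$.

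The final step is the eigenvalue count. Using the basis of $\operatorname{Sym}_n(\R)$ consisting of the diagonals $E_{ii}$ together with the symmetrized off-diagonals $E_{ij}+E_{ji}$ for $i<j$, the formula $(gBg)_{ij}=g_{ii}B_{ij}g_{jj}$ shows that $\phi$ acts by $+1$ on every $E_{ii}$ and on every $E_{ij}+E_{ji}$ with $1<i<j$, and by $-1$ on the $n-1$ vectors $E_{1j}+E_{j1}$ with $j\geq 2$. Hence $\det\phi=(-1)^{n-1}$, which is $+1$ iff $n$ is odd. Combined with the first paragraph, this shows that the orientation-preserving subgroup of $\GL_n(\R)$ is $\GL_n^{>0}(\R)$ when $n$ is even and all of $\GL_n(\R)$ when $n$ is odd, as claimed. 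There is no real obstacle beyond bookkeeping: the only subtle point is arranging, via the choice $g\in\operatorname{O}(n)$, that the orientation-test occurs at a point fixed by $g$, so that the differential is an honest linear endomorphism of the tangent space.
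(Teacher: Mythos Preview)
Your proof is correct and follows essentially the same approach as the paper: both reduce to the identity component by connectedness, test the single element $g=\operatorname{diag}(-1,1,\dots,1)$ at the basepoint, and count that exactly $n-1$ basis vectors of the tangent space are negated, giving determinant $(-1)^{n-1}$. The only cosmetic difference is that you model the tangent space as $\operatorname{Sym}_n(\R)$ via the positive-definite matrices identification $g\operatorname{O}(n)\mapsto gg^{T}$, while the paper uses the Lie algebra quotient $\mathfrak{gl}_n(\R)/\mathfrak{o}(n)$ with the conjugation action; these are canonically isomorphic and the calculation is literally the same.
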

Since $\GL_n(\R)$ has two components, the subgroup in question is either $\GL_n(\R)$ or 
$\GL_n^{>0}(\R)$, so it is enough to prove that $\GL_n(\R)$ itself preserves the orientation
on $Y$ if and only if $n$ is odd.

By Lemma \ref{lemma:orientsetup}, the group $\GL_n(\R)$ preserves the orientation on $Y$
if and only if $\operatorname{O}(n)$ preserves the orientation on the tangent space at the identity coset.
We can identify this tangent space as the quotient of Lie algebras 
\[V = \frac{\mathfrak{gl}_n(\R)}{\mathfrak{o}(n)},\]
and the action of $\operatorname{O}(n)$ on it is the one induced by conjugation.  

Since $\operatorname{O}(n)$ has only two components and the component of the identity clearly preserve the
orientation on this tangent space, it suffices to check a single element of the non-identity
component.  We will use the matrix $e_{11}(-1)$ obtained from the identity matrix by replacing
the entry at $(1,1)$ with $-1$.

For $1 \leq i,j \leq n$, let $a_{ij} \in \mathfrak{gl}_n(\R)$ be the matrix with a $1$ at position
$(i,j)$ and zeros elsewhere.  The vector space $V$ has a basis consisting of the cosets of
$\Set{$a_{ij}$}{$1 \leq i \leq j \leq n$}$.  Conjugation by $e_{11}(-1)$ fixes $a_{ii}$ for $1 \leq i \leq n$,
and also fixes $a_{ij}$ for $2 \leq i < j \leq n$.  However, conjugation by $e_{11}(-1)$ takes
$a_{1j}$ with $j \leq 2 \leq n$ to $-a_{1j}$.  This conjugation action thus negates precisely $(n-1)$ elements
of our basis, so the determinant of its action on $V$ is $(-1)^{n-1}$.  We conclude that 
$e_{11}(-1)$ preserves the orientation if and only if $n$ is odd, as desired.
\end{proof}

\section{Reduction I: the action on flag stabilizers is trivial}
\label{section:reductioni}

We now begin our proof of Theorem \ref{maintheorem:highsmall}.  In this
section, we reduce this theorem to proving that a certain action is trivial.

\p{Setup}
Let $\cO$ be the ring of integers in a number field $K$ such that $\cO$ has an 
element of norm $-1$.  Let $\chi\colon \GL_n(\cO) \rightarrow \{\pm 1\}$
be the composition of the determinant with the norm map
$\cO^{\times} \rightarrow \{\pm 1\}$, and define
$\CL_n(\cO) = \ker(\chi)$.  Let $\fF$ be a length-$q$ flag in $K^n$, i.e.\ an
increasing sequence of subspaces
\[0 \subsetneq \fF_0 \subsetneq \fF_1 \subsetneq \cdots \subsetneq \fF_q \subsetneq K^n.\]
By convention, the degenerate case $q=-1$ simply means the empty flag.  Define
$\GL_n(\cO,\fF)$ (resp.\ $\CL_n(\cO,\fF)$) to be the subgroup of $\GL_n(\cO)$ (resp.\ 
$\CL_n(\cO)$) that preserves $\fF$.  If $q=-1$, then $\GL_n(\cO,\fF) = \GL_n(\cO)$
and $\CL_n(\cO,\fF) = \CL_n(\cO)$.  The group $\CL_n(\cO,\fF)$ is a normal subgroup
of $\GL_n(\cO,\fF)$ of index at most $2$.  See Remark \ref{remark:achievedet} below
for a proof that it has index equal to $2$.

\p{The reduction}
The proof of the following proposition begins in
\S \ref{section:reductionii}.

\begin{proposition}
\label{proposition:flagactiontrivial}
Let $\cO$ be the ring of integers in a number field $K$ and let
$\fF$ be a flag in $K^n$.
Assume that $\cO^{\times}$ has an element of norm $-1$, and
let $r$ and $2s$ be the numbers of real and complex embeddings of $K$.
Then the action of $\GL_n(\cO,\fF)$ on its normal subgroup $\CL_n(\cO,\fF)$ induces
the trivial action on $\HH_k(\CL_n(\cO,\fF);\Q)$ for
$0 \leq k \leq \min(r+s,n)-1$.
\end{proposition}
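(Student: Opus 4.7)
Since $\CL_n(\cO,\fF)$ is normal of index at most $2$ in $\GL_n(\cO,\fF)$ and inner automorphisms act trivially on group homology, the induced action of $\GL_n(\cO,\fF)$ on $\HH_k(\CL_n(\cO,\fF);\Q)$ factors through the order-$\le 2$ quotient. It therefore suffices to exhibit a single element $g \in \GL_n(\cO,\fF)$ with $\chi(g)=-1$ whose conjugation action on $\HH_k(\CL_n(\cO,\fF);\Q)$ is trivial for every $k \le \min(r+s,n)-1$.

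To construct such a $g$, fix a unit $u \in \cO^\times$ of norm $-1$. Because $\cO$ is Dedekind, $\cO^n/(\cO^n \cap \fF_q)$ is torsion-free and finitely generated, hence projective, so $\cO^n$ splits as $(\cO^n \cap \fF_q)\oplus M$. Since $\fF_q \subsetneq K^n$ the module $M$ is nonzero, so by the structure theorem for projective modules over a Dedekind domain $M$ has a rank-$1$ direct summand $L$, giving $\cO^n = (\cO^n \cap \fF_q) \oplus L \oplus L'$. Define $g$ to be multiplication by $u$ on $L$ and the identity on the other two summands. Then $g \in \GL_n(\cO)$ fixes $\fF_q$ pointwise, so it preserves every $\fF_i$ with $i \le q$ and thus lies in $\GL_n(\cO,\fF)$; and $\det(g) = u$, so $\chi(g)=-1$. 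Let $H \le \CL_n(\cO,\fF)$ denote the centralizer of $g$. Since the two eigenspaces of $g$ are saturated in $\cO^n$, the group $H$ consists of block-diagonal matrices preserving the above decomposition and the flag $\fF$; it is, up to a central $\cO^\times$-factor coming from $L$, the $\CL$-type stabilizer of $\fF$ inside $\GL$ of the rank-$(n-1)$ projective $\cO$-module $(\cO^n \cap \fF_q) \oplus L'$.

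Because $g$ centralizes $H$, it acts trivially on the image of the natural map $\HH_k(H;\Q) \to \HH_k(\CL_n(\cO,\fF);\Q)$. The whole proposition is therefore reduced to showing this map is \emph{surjective} for all $k \le \min(r+s,n)-1$. This surjectivity is the main obstacle and is the content of the subsequent sections: it is a homological-stability statement whose required range $\min(r+s,n)-1$ reaches well past Borel's known stable range of $\sim n/2$, while the cap at $r+s$ reflects the Dirichlet unit rank $r+s-1$ of $\cO^\times$. As foreshadowed in the introduction, proving it will require a nonstandard use of homological stability technology that simultaneously exploits the matrix rank $n$ and the abundance of units in $\cO$.
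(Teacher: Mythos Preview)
Your reduction differs from the paper's in a way that matters. You fix a \emph{single} element $g$ (scaling one rank-$1$ summand $L$ in the complement of $\fF_q$) and reduce everything to the claim that the centralizer $H$ of $g$ surjects onto $\HH_k(\CL_n(\cO,\fF);\Q)$ in the desired range. You then assert that this surjectivity is ``the content of the subsequent sections.'' It is not. The paper's subsequent sections prove two different statements: Proposition~\ref{proposition:carry}, that the fully block-diagonal subgroup $\CL_n(\cO,\fP)=\CL(P_0)\times\cdots\times\CL(P_{q+1})$ surjects in homology up to degree $r+s-1$; and Proposition~\ref{proposition:projectiveactiontrivial}, that for a single projective $P$ the $\GL(P)$-action on $\HH_k(\CL(P);\Q)$ is trivial up to $\min(r+s,\rk(P))-1$. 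Neither of these is your centralizer claim, and your claim does not follow from Proposition~\ref{proposition:carry}: since $g$ acts as $\operatorname{diag}(u,1)$ on $P_{q+1}=L\oplus L'$, a generic element of $\CL(P_{q+1})$ will not commute with $g$, so $\CL_n(\cO,\fP)\not\subseteq H$ when $\rk(P_{q+1})\ge 2$.

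The paper's actual argument avoids committing to a single $g$. After surjecting from $\CL_n(\cO,\fP)$ (Proposition~\ref{proposition:carry}), it uses K\"unneth to write $\HH_k(\CL_n(\cO,\fP);\Q)$ as a direct sum of tensors $\HH_{i_0}(\CL(P_0);\Q)\otimes\cdots\otimes\HH_{i_m}(\CL(P_m);\Q)$ with $i_0+\cdots+i_m=k\le\min(r+s,n)-1$. A pigeonhole argument (using $\sum\rk(P_j)=n$) then shows that for \emph{each} such summand there is some index $j$ with $i_j\le\min(r+s,\rk(P_j))-1$, and Proposition~\ref{proposition:projectiveactiontrivial} furnishes an element of $\GL(P_j)\setminus\CL(P_j)$ acting trivially on that factor. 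The element depends on the summand; no single $g$ is expected to work uniformly. This flexibility is the point you are missing, and it is precisely why the paper needs the second proposition (the empty-flag case for a general projective) in addition to the first.
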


Here we will assume the truth of Proposition \ref{proposition:flagactiontrivial}
and use it to prove Theorem \ref{maintheorem:highsmall}.

\begin{proof}[Proof of Theorem \ref{maintheorem:highsmall}, assuming Proposition \ref{proposition:flagactiontrivial}]
We start by recalling what we must prove.  Let $\cO$ be the ring of integers in a
number field $K$ and let $\vcd$ be the
virtual cohomological dimension of $\GL_n(\cO)$.  Assume that the following hold.
\setlength{\parskip}{0pt}
\begin{compactitem}
\item $n$ is even.
\item $\cO^{\times}$ contains an element of norm $-1$.
\item Letting $r$ and $2s$ be the numbers of real and complex embeddings of $K$, we
have $r+s \geq n$.
\end{compactitem}
Our goal is then to prove that $\HH^{\vcd}(\GL_n(\cO);\Q) = 0$.  Let \setlength{\parskip}{\baselineskip}
$\chi\colon \GL_n(\cO) \rightarrow \{\pm 1\}$ be the composition of
the determinant with the norm map $\cO^{\times} \rightarrow \{\pm 1\}$.
Applying Borel--Serre duality and Theorem \ref{maintheorem:dualizing}, we see
that our goal is equivalent to showing that
\[\HH_0\left(\GL_n\left(\cO\right); \St_n\left(K\right) \otimes \Q_{\chi}\right) = 0.\]
For this, we must study the action of $\GL_n(\cO)$ on the chain complex for the building
$\Tits_n(K)$.

Let $\tC_{\bullet}$ be the usual augmented chain complex calculating the reduced simplicial
homology of $\Tits_n(K)$, so $\tC_{-1} = \Z$ and
\[\HH_k\left(\tC_{\bullet}\right) = \begin{cases}
\St_n\left(K\right) & \text{if $k=n-2$},\\
0 & \text{if $k \neq n-2$}.
\end{cases}\]
The chain complex $\tC_{\bullet}$ can be regarded as a chain complex of $\GL_n(K)$-modules,
but we will only consider it as a chain complex of $\GL_n(\cO)$-modules.
Define
$D_{\bullet} = \tC_{\bullet} \otimes \Q_{\chi}$, so
\[\HH_k\left(D_{\bullet}\right) = \begin{cases}
\St_n\left(K\right) \otimes \Q_{\chi} & \text{if $k=n-2$},\\
0 & \text{if $k \neq n-2$}
\end{cases}\]
as $\GL_n(\cO)$-modules.

We will examine the homology of $\GL_n(\cO)$ with coefficients in the chain complex
$D_{\bullet}$ in the sense of \cite[\S VII.5]{BrownCohomology}.  Letting
$F_{\bullet}$ be a projective resolution of the trivial $\GL_n(\cO)$-module $\Z$, by definition
$\HH_{\ast}(\GL_n(\cO);D_{\bullet})$ is the homology of the double complex
$F_{\bullet} \otimes D_{\bullet}$.  Just like for any double complex, there are two spectral
sequences converging to the homology of $F_{\bullet} \otimes D_{\bullet}$.  The
first spectral sequence has
\[E^2_{pq} = \HH_q\left(\GL_n\left(\cO\right);\HH_p\left(D_{\bullet}\right)\right) = \begin{cases}
\HH_q\left(\GL_n\left(\cO\right);\St_n\left(K\right) \otimes \Q_{\chi}\right) & \text{if $p=n-2$},\\
0 & \text{if $p \neq n-2$}.
\end{cases}\]
This spectral sequence thus degenerates to show that
\[\HH_k\left(\GL_n\left(\cO\right);D_{\bullet}\right) = \HH_{k-(n-2)}\left(\GL_n\left(\cO\right);\St_n\left(K\right) \otimes \Q_{\chi}\right).\]
We deduce that our goal is equivalent to showing that
$\HH_{n-2}(\GL_n(\cO);D_{\bullet}) = 0$.

The second spectral sequence converging to the homology of $F_{\bullet} \otimes D_{\bullet}$
has
\[(E')^1_{pq} = \HH_p\left(\GL_n\left(\cO\right);D_q\right).\]
To prove that $\HH_{n-2}(\GL_n(\cO);D_{\bullet}) = 0$, it is enough to
prove that $(E')^1_{pq} = \HH_p(\GL_n(\cO);D_q) = 0$ for all
$p \geq 0$ and $q \geq -1$ such that $p+q = n-2$. To that end, fix
such $p$ and $q$.

Let $\cF$ be the set of length-$q$ flags in $K^n$; by convention, for $q=-1$ the
set $\cF$ consists of the single empty flag.  The vector space $D_q$ thus consists of formal
$\Q$-linear combinations of elements of $\cF$, where $\GL_n(\cO)$ acts
on $\cF$ via its obvious action and on the coefficients $\Q$ via $\chi$.  Let
$I$ be a set of orbit representatives for the action of
$\GL_n(\cO)$ on $\cF$.  For $\fF \in I$, recall that $\GL_n(\cO,\fF)$ is the
$\GL_n(\cO)$-stabilizer of $\fF$.  We have
\[D_q = \bigoplus_{\fF \in I} \Ind_{\GL_n\left(\cO,\fF\right)}^{\GL_n\left(\cO\right)} \Q_{\chi},\]
so
\[\HH_p\left(\GL_n\left(\cO\right);D_q\right) = \bigoplus_{\fF \in I} \HH_p\left(\GL_n\left(\cO\right);\Ind_{\GL_n\left(\cO,\fF\right)}^{\GL_n\left(\cO\right)} \Q_{\chi}\right) = \bigoplus_{\fF \in I} \HH_p\left(\GL_n\left(\cO,\fF\right);\Q_{\chi}\right),\]
where the final isomorphism comes from Shapiro's Lemma.  It
is thus enough to prove that $\HH_p(\GL_n(\cO,\fF);\Q_{\chi}) = 0$ for
all $\fF \in I$.

Fix $\fF \in I$. Recall that $\CL_n(\cO,\fF)$ is the kernel of the restriction of
$\chi\colon \GL_n(\cO) \rightarrow \{\pm 1\}$ to $\GL_n(\cO,\fF)$.  Since
$\CL_n(\cO,\fF)$ is a finite-index normal subgroup of $\GL_n(\cO,\fF)$,
the existence of the transfer map shows that
\[\HH_p\left(\GL_n\left(\cO,\fF\right);\Q_{\chi}\right) = \left(\HH_p\left(\CL_n\left(\cO,\fF\right);\Q_{\chi}\right)\right)_{\GL_n\left(\cO,\fF\right)},\]
where the subscript indicates that we are taking the $\GL_n(\cO,\fF)$-coinvariants.
See \cite[Proposition III.10.4]{BrownCohomology} for more details.

We thus must show that these coinvariants vanish.  Since $p = n-2-q \leq n-1$ (with
equality precisely when $q=-1$), we can apply
Proposition \ref{proposition:flagactiontrivial} to deduce that
the action of $\GL_n(\cO,\fF)$ on $\HH_p(\CL_n(\cO,\fF);\Q)$ is trivial.  Using
this along with the fact that $\CL_n(\cO,\fF)$ acts trivially on $\Q_{\chi}$,
we compute as follows:
\begin{align*}
\left(\HH_p\left(\CL_n\left(\cO,\fF\right);\Q_{\chi}\right)\right)_{\GL_n\left(\cO,\fF\right)} &= \left(\HH_p\left(\CL_n\left(\cO,\fF\right);\Q\right) \otimes \Q_{\chi}\right)_{\GL_n\left(\cO,\fF\right)} \\
&= \HH_p\left(\CL_n\left(\cO,\fF\right);\Q\right) \otimes \left(\Q_{\chi}\right)_{\GL_n\left(\cO,\fF\right)} \\
&= \HH_p\left(\CL_n\left(\cO,\fF\right);\Q\right) \otimes 0 = 0.
\end{align*}
Here we are using the fact that $\cO$ has an element of norm $-1$, so the
group $\GL_n(\cO,\fF)$ acts nontrivially on $\Q_{\chi}$ and
$(\Q_{\chi})_{\GL_n(\cO,\fF)} = 0$.  The theorem follows.
\end{proof}

\section{Reduction II: splitting a flag}
\label{section:reductionii}

In the previous section, we reduced Theorem \ref{maintheorem:highsmall} to Proposition
\ref{proposition:flagactiontrivial}.  In this section, we reduce Proposition 
\ref{proposition:flagactiontrivial}
to two further propositions that will be proven in subsequent sections.

\subsection{Basic facts about flags}
\label{section:flagsbasics}

Before we can do this reduction, we must discuss some basic facts about flags for
which \cite{MilnorKTheory} is a suitable reference.  Let $\cO$ be the ring of
integers in a number field $K$.  Fix a finite-rank projective $\cO$-module $Q$ and
let $n = \rk(Q)$.  We can then identify $K^n$ with $Q \otimes K$.

\p{Subspace stabilizers and projective modules}
For a subspace $V$ of $K^n = Q \otimes K$, the intersection $V \cap Q$ is a direct summand
of $Q$.  Here is a quick proof of this standard fact: 
$Q / V \cap Q$ is a finitely generated $\cO$-submodule of $K^n/V$, and thus is 
torsion-free and hence projective, allowing
us to split the short exact sequence 
\[0 \longrightarrow V \cap Q \longrightarrow Q \longrightarrow Q / V \cap Q \longrightarrow 0.\]
This implies that $V \cap Q$ is itself a projective $\cO$-module.

\p{Splitting flag stabilizers}
Now consider a flag $\fF$ in $K^n = Q \otimes K$ of the form
\[0 \subsetneq \fF_0 \subsetneq \fF_1 \subsetneq \cdots \subsetneq \fF_q \subsetneq K^n.\]
Just like we did for $\GL_n(\cO)$, we will write $\GL(Q,\fF)$ for the subgroup of $\GL(Q)$ stabilizing
$\fF$.  Intersecting our flag with $Q$, we obtain a flag
\[0 \subsetneq \fF_0 \cap Q \subsetneq \fF_1 \cap Q \subsetneq \cdots \subsetneq \fF_q \cap Q \subsetneq Q\]
of direct summands of $Q$.  Each term of this flag is a direct summand of the next one.  Iteratively
splitting each off from the next, we obtain a decomposition 
\[Q = P_0 \oplus P_1 \oplus \cdots \oplus P_{q+1}\]
such that
\[\fF_i \cap Q = P_0 \oplus \cdots \oplus P_i \quad \quad (0 \leq i \leq q).\]
The $P_i$ are all projective $\cO$-modules, and we will call the sequence
$\fP = (P_0,\ldots,P_{q+1})$ a {\em projective splitting} of $Q$ with respect to
the flag $\fF$.  Define
\[\GL(Q,\fP) = \GL(P_0) \times \cdots \times \GL(P_{q+1}) \subset \GL(Q,\fF).\]
If $Q \cong \cO^n$, we will often write $\GL_n(\cO,\fP)$ instead of $\GL(\cO^n,\fP)$.

\p{Determinants of automorphisms of projective modules}
For a finite-rank projective $\cO$-module $P$, we have
\[\GL(P) \subset \GL(P \otimes K) \cong \GL_{\rk(P)}(K),\]
so there is a well-defined determinant map $\GL(P) \rightarrow K^{\times}$.  In fact,
the image of this map lies in $\cO^{\times}$:

\begin{lemma}
\label{lemma:detprojective}
Let $\cO$ be the ring of integers in a number field $K$ and let $P$ be a finite-rank
projective $\cO$-module.  Then $\det(f) \in \cO^{\times}$ for $f \in \GL(P)$.
\end{lemma}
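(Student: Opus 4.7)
The plan is to reduce to the case of a free module by exploiting the fact that every finitely generated projective $\cO$-module is a direct summand of a finitely generated free $\cO$-module. Concretely, I would first choose a finitely generated projective $\cO$-module $P'$ with $P \oplus P' \cong \cO^N$ for some $N$; existence is immediate from the definition of projective, and for a Dedekind domain like $\cO$ one even knows by Steinitz's theorem that one can take $P'$ to be, e.g., the inverse of the Steinitz class of $P$ together with some copies of $\cO$.

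Next, given $f \in \GL(P)$, I would consider the automorphism $f \oplus \Id_{P'} \in \GL(P \oplus P') \cong \GL_N(\cO)$. This sits inside $\GL_N(K)$ by extension of scalars, and the compatibility of determinants with direct sums gives
\[
\det(f \oplus \Id_{P'}) = \det(f) \cdot \det(\Id_{P' \otimes K}) = \det(f)
\]
in $K^{\times}$, where $\det(f)$ refers to the determinant of $f$ viewed as an automorphism of $P \otimes K \cong K^{\rk(P)}$. On the other hand, $f \oplus \Id_{P'}$ is an automorphism of the free $\cO$-module $\cO^N$, so its determinant as an element of $\GL_N(\cO)$ lies in $\cO^{\times}$. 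Combining these two observations gives $\det(f) \in \cO^{\times}$, which is the claim.

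There is essentially no serious obstacle here; the only point that requires a sentence of care is verifying that the determinant of $f$ computed intrinsically on $P \otimes K$ agrees with the determinant of $f \oplus \Id_{P'}$ viewed as an element of $\GL_N(K)$ under the identification $(P \oplus P') \otimes K \cong K^N$. This is a general property of the determinant under direct sum decompositions of finite-dimensional $K$-vector spaces and does not depend on the choice of $P'$ or of the isomorphism $P \oplus P' \cong \cO^N$. An alternative (and equally short) approach would be to argue locally: for each nonzero prime $\mathfrak{p}$ of $\cO$, the localization $P_{\mathfrak{p}}$ is free over the discrete valuation ring $\cO_{\mathfrak{p}}$, so $\det(f) \in \cO_{\mathfrak{p}}^{\times}$, and then use $\cO^{\times} = K^{\times} \cap \bigcap_{\mathfrak{p}} \cO_{\mathfrak{p}}^{\times}$. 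I would prefer the stably-free argument for its brevity.
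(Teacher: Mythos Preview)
Your proposal is correct and takes essentially the same approach as the paper: choose a complement $P'$ with $P \oplus P' \cong \cO^N$, extend $f$ by the identity on $P'$, and observe that $\det(f \oplus \Id_{P'}) = \det(f)$ while the left-hand side lies in $\cO^{\times}$ since it is the determinant of an element of $\GL_N(\cO)$. The paper phrases the compatibility of determinants via a commutative diagram rather than the direct-sum formula, but the argument is identical; your alternative localization argument is not used in the paper.
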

\begin{proof}
Since $P$ is a finite-rank projective $\cO$-module, there exists another finite-rank projective
$\cO$-module $P'$ such that $P \oplus P' \cong \cO^m$ for some $m$.  Extending automorphisms of
$P$ over $P'$ by the identity, we get an embedding $\GL(P) \hookrightarrow \GL(\cO^m)$
that fits into a commutative diagram
\[\begin{CD}
\GL(P)     @>>> \GL(P \otimes K)     @>{\cong}>> \GL_{\rk(P)}(K) @>{\det}>> K^{\times} \\
@VVV            @VVV                             @VVV                       @VV{=}V    \\
\GL(\cO^m) @>>> \GL(\cO^m \otimes K) @>{\cong}>> \GL_m(K)        @>{\det}>> K^{\times}.
\end{CD}\]
We get an equality on the rightmost vertical arrow since, with respect to an appropriate basis,
the map $\GL_{\rk(P)}(K) \rightarrow \GL_m(K)$ is the standard one induced by the inclusion
$K^{\rk(P)} \hookrightarrow K^m$.  Since matrices in $\GL(\cO^n) \cong \GL_n(\cO)$ have
determinant in $\cO^{\times}$, so do matrices in $\GL(P)$.
\end{proof}

Assuming now that $\cO^{\times}$ has an element of norm $-1$, we can define
$\CL(P)$ to be the kernel of the map $\chi\colon \GL(P) \rightarrow \{\pm 1\}$ obtained
by composing the determinant with the norm map $\cO^{\times} \rightarrow \{\pm 1\}$.

\p{Splitting flag stabilizers II}
Continue to assume that $\cO^{\times}$ has an element of norm $-1$.  Recall that $Q$ is
a fixed rank-$n$ projective $\cO$-module.
If $\fF$ is a length-$q$ flag in $K^n = Q \otimes K$ and $\fP = (P_0,\ldots,P_{q+1})$ is a projective
splitting of $Q$ with respect to $\fF$, then using the above we can define
\[\CL(Q,\fP) = \CL(P_0) \times \cdots \times \CL(P_{q+1}) \subset \GL(Q,\fP).\]
The group $\CL(Q,\fP)$ is a normal subgroup of $\GL(Q,\fP)$ of index $2^{q+2}$ (see
Remark \ref{remark:achievedet} below if this is not clear).  Just like
above, for $Q = \cO^n$ we will sometimes write $\CL_n(\cO,\fP)$ instead of $\CL(Q,\fP)$.

\begin{remark}
\label{remark:achievedet}
If $\fF$ is a flag in $K^n$ (possibly the empty flag), then the determinant
map $\GL(Q,\fF) \rightarrow \cO^{\times}$ is surjective (and thus
if $\cO$ has an element of norm $-1$, then $\CL(Q,\fF)$ is an index-$2$ subgroup
of $\GL(Q,\fF)$).  Indeed, without loss
of generality we can assume that $\fF$ is a maximal flag since this just replaces
$\GL(Q,\fF)$ by a subgroup.  Let $\fP = (P_0,\ldots,P_{n-1})$ be a projective
splitting of $Q$ with respect to $\fF$, so we have
\[\GL(Q,\fP) = \GL(P_0) \times \cdots \times \GL(P_{n-1}) \subset \GL(Q,\fF).\]
For all $d \in \cO^{\times}$, the element of $\GL(Q,\fP)$ that scales
$P_0$ by $d$ and fixes $P_1,\ldots,P_{n-1}$ lies in $\GL(Q,\fF)$ and has determinant
$d$.
\end{remark}

\subsection{The reduction}
\label{section:reductioniiwork}

We now turn to Proposition \ref{proposition:flagactiontrivial}.  Our goal is
to reduce it to two propositions.  The first is the following, which
informally says in a range of degrees the homology groups of a flag-stabilizer
are completely supported on a projective splitting:

\begin{proposition}
\label{proposition:carry}
Let $\cO$ be the ring of integers in a number field $K$, let $Q$ be a rank-$n$
projective $\cO$-module, let
$\fF$ be a flag in $Q \otimes K$, and let $\fP$ be a projective splitting of $Q$ with respect
to $\fF$.
Assume that $\cO^{\times}$ has an element of norm $-1$, and
let $r$ and $2s$ be the numbers of real and complex embeddings of $K$.
Then the map $\HH_k(\CL(Q,\fP);\Q) \rightarrow \HH_k(\CL(Q,\fF);\Q)$
is a surjection for $0 \leq k \leq r+s-1$.
\end{proposition}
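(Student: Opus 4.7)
The strategy is to use the projective splitting $\fP$ to realize $\CL(Q,\fF)$ as a semidirect product, then reduce via the Lyndon--Hochschild--Serre spectral sequence to a homological vanishing statement. Writing endomorphisms of $Q$ in block form with respect to $Q = P_0 \oplus \cdots \oplus P_{q+1}$, the subgroup $\GL(Q,\fF) \subset \GL(Q)$ consists exactly of the block upper-triangular invertibles, and the block-diagonal projection $\GL(Q,\fF) \to \GL(Q,\fP)$ is a retraction. Its kernel $U$---the block upper-triangular matrices with identity diagonal blocks---is a finitely generated nilpotent group all of whose elements have determinant $1$, so $U \subset \CL(Q,\fF)$, and we obtain a split short exact sequence
\[1 \to U \to \CL(Q,\fF) \to \CL(Q,\fP) \to 1.\]

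\emph{Reduction via Hochschild--Serre.} The Lyndon--Hochschild--Serre spectral sequence for this extension is
\[E^2_{p,j} = H_p\bigl(\CL(Q,\fP);\, H_j(U;\Q)\bigr) \;\Longrightarrow\; H_{p+j}\bigl(\CL(Q,\fF);\Q\bigr).\]
The section coming from the projective splitting makes the map $i_* \colon H_k(\CL(Q,\fP);\Q) \to H_k(\CL(Q,\fF);\Q)$ of the proposition into a split injection whose retraction is the edge map $H_k(\CL(Q,\fF);\Q) \twoheadrightarrow E^\infty_{k,0} \hookrightarrow E^2_{k,0}$; in particular this forces the natural surjection $E^2_{k,0} \twoheadrightarrow E^\infty_{k,0}$ to be an isomorphism. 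Surjectivity of $i_*$ is therefore equivalent to the vanishing of all other pieces of total degree $k$ on the $E^\infty$-page, and since each $E^\infty_{p,j}$ is a subquotient of $E^2_{p,j}$, Proposition~\ref{proposition:carry} reduces to the following vanishing statement, which I expect to be formulated as the second proposition of this section:
\[H_p\bigl(\CL(Q,\fP);\, H_j(U;\Q)\bigr) = 0 \qquad \text{for all } j \ge 1 \text{ with } p+j \le r+s-1. \qquad (\ast)\]

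\emph{Proving $(\ast)$: the main obstacle.} This is where the heart of the argument lies. The plan is a ``center kills'' argument powered by Dirichlet's unit theorem. Because $\cO^\times$ contains an element of norm $-1$, the norm-one units $\cO^{\times,1}$ form a free abelian group of rank $r+s-1$, and embedding them as scalars $\lambda \cdot \Id_{P_i}$ yields a central rank-$(r+s-1)$ subgroup of each $\CL(P_i) \subset \CL(Q,\fP)$. These scalars act on the graded pieces of $U_{ab} \otimes_\Z \Q$ by the explicit root-like characters $\lambda_i \lambda_j^{-1}$, and hence---via the Malcev/Nomizu identification of $H_*(U;\Q)$ with the Chevalley--Eilenberg cohomology of the associated rational nilpotent Lie algebra---on all of $H_j(U;\Q)$ by sums of such characters. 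The elementary fact that a central $z \in G$ acting on a $G$-module $V$ with $z - \Id$ invertible forces $H_*(G;V) = 0$ then lets one iteratively replace $H_j(U;\Q)$ by its generalized fixed subspace under the available scalar directions. The delicate combinatorial step---and the reason for the precise numerical range---is to show that within $p+j \le r+s-1$ every weight of $H_j(U;\Q)$ can indeed be separated from the trivial character by the rank-$(r+s-1)$ scalar torus; getting the range to be exactly $r+s-1$ rather than off by one is, I expect, the technical heart of the remaining work.
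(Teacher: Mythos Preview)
Your Hochschild--Serre reduction is sound in spirit, with one slip: the block-diagonal projection sends $\CL(Q,\fF)$ onto $\{(g_i) \in \GL(Q,\fP) : \prod_i \chi(g_i)=1\}$, which strictly contains $\CL(Q,\fP)=\prod_i \CL(P_i)$ whenever the flag is nonempty. This is easily patched by a transfer argument, since the inclusion has finite index.

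More substantively, the paper takes a different route that avoids your ``delicate combinatorial step'' entirely. Rather than confronting the full nilpotent $U$ at once, the paper inducts on the length of the flag, peeling off the last block $P_t$ at each step. The unipotent kernel at each step is then \emph{abelian}---just $\Hom(P_t,Q')$---and the key lemma (Lemma~\ref{lemma:spectralsequence}, not a statement of your form $(\ast)$) shows that for any $G \le \GL(Q')$ the inclusion $G \times \CL(P_t) \hookrightarrow \Hom(P_t,Q') \rtimes (G \times \CL(P_t))$ is a rational homology isomorphism in degrees $\le r+s-1$. Because the kernel is abelian, its homology is an exterior power, and a \emph{single} scalar direction---the copy of $\cO_1^\times$ central in $\CL(P_t)$---suffices for the center-kills argument: after identifying the Zariski closure of $\cO_1^\times$, one checks that the element $(2,\dots,2,2^{-(r+s-1)})$ of the norm-one torus has no eigenvalue $1$ on $\wedge^q(\cO\otimes\R)^N$ for $1\le q\le r+s-1$. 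Your approach instead demands control of the weights of the full scalar torus on $\HH_j(U;\Q)$ for a genuinely nilpotent $U$, uniformly in the flag length; this may well be tractable (Kostant's theorem on $\mathfrak{n}$-homology is the natural tool), but you have not proved it, and the paper's inductive organization is designed precisely to make this step unnecessary.
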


The second is the following, which is 
a generalization from $\cO^n$ to an arbitrary finite-rank
projective module of the special case of Proposition \ref{proposition:flagactiontrivial}
where the flag is trivial, and thus the conclusion of Proposition \ref{proposition:flagactiontrivial} is
that $\GL_n(\cO)$ acts trivially on the rational homology of $\CL_n(\cO)$.

\begin{proposition}
\label{proposition:projectiveactiontrivial}
Let $\cO$ be the ring of integers in a number field $K$ and let
$P$ be a finite-rank projective $\cO$-module.
Assume that $\cO^{\times}$ has an element of norm $-1$, and
let $r$ and $2s$ be the numbers of real and complex embeddings of $K$.
Then the action of $\GL(P)$ on its normal subgroup $\CL(P)$ induces
the trivial action on $\HH_k(\CL(P);\Q)$ for
$0 \leq k \leq \min(r+s,\rk(P))-1$.
\end{proposition}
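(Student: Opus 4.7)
The plan is to reduce this proposition to a sharp injectivity statement for a stabilization map on rational homology, and then to defer that injectivity to the homological stability machinery developed in the subsequent sections.

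\emph{Reduction to one element.} Since inner automorphisms act trivially on homology, the action of $\GL(P)$ on $\HH_k(\CL(P);\Q)$ factors through $\GL(P)/\CL(P)$. By Remark \ref{remark:achievedet} this quotient is cyclic of order $2$, generated by the image of any $g \in \GL(P)$ with $\chi(g) = -1$. Using the structure theorem for projective modules over a Dedekind domain, one writes $P \cong \cO^{\rk(P)-1} \oplus I$ for some fractional ideal $I$ and takes $g$ to scale the leading $\cO$-summand by a unit $u \in \cO^{\times}$ of norm $-1$ (which exists by hypothesis) and fix everything else. It suffices to show that this single $g$ acts trivially on $\HH_k(\CL(P);\Q)$ for $k \leq \min(r+s,\rk(P))-1$.

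\emph{Stabilization that turns $g$ into an inner automorphism.} Choose again a unit $u \in \cO^{\times}$ of norm $-1$ and form $\tilde g := g \oplus u \in \GL(P \oplus \cO)$. Then $\chi(\tilde g) = \chi(g)\chi(u) = 1$, so $\tilde g \in \CL(P \oplus \cO)$. The block-diagonal embedding $\iota\colon \CL(P) \hookrightarrow \CL(P \oplus \cO)$, sending $\alpha \mapsto \alpha \oplus \Id$, interweaves conjugation by $g$ on the source with conjugation by $\tilde g$ on the target; the latter is an inner automorphism and so acts trivially on homology. Consequently the stabilization map
\[
\iota_* \colon \HH_k(\CL(P);\Q) \longrightarrow \HH_k(\CL(P \oplus \cO);\Q)
\]
satisfies $\iota_*(g \cdot x) = \iota_*(x)$ for all $x$, so $\iota_*(g \cdot x - x) = 0$. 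If $\iota_*$ is injective in the relevant range of $k$, then $g$ acts trivially on $\HH_k(\CL(P);\Q)$ and we are done.

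\emph{Injectivity of $\iota_*$ --- the main obstacle.} The remaining task is to prove that $\iota_*$ is injective for $0 \leq k \leq \min(r+s,\rk(P))-1$. As the introduction stresses, classical homological stability (Charney and others) yields injectivity only up to roughly $k \leq \rk(P)/2$, which in general falls short of the bound we need; moreover, an off-by-one loss would already break the application to Theorem \ref{maintheorem:highsmall}. I expect this to be by far the hardest step. The plan is to combine Borel's computation of the stable rational cohomology of $\SL_n(\cO)$ with a careful analysis of the Hochschild--Serre spectral sequence for $1 \to \SL(P) \to \GL(P) \to \cO^{\times} \to 1$ (and its analogue for $\CL(P)$ with quotient $\ker(N\colon \cO^{\times} \to \{\pm 1\})$), using the fact that $\cO^{\times}$ and $\ker(N)$ both have rational cohomology concentrated in degrees $\leq r+s-1$ by Dirichlet's unit theorem. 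I would isolate this injectivity as the key technical lemma and rely on the stability results proved in the subsequent sections to establish it.
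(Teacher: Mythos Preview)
Your first two steps are correct and the reduction to injectivity of the stabilization map $\iota_*\colon \HH_k(\CL(P);\Q)\to \HH_k(\CL(P\oplus\cO);\Q)$ is valid. The gap is the third step: that injectivity is not proved anywhere in the paper, and the ``subsequent sections'' do not establish it. What the paper actually proves later is (a) a lemma about semidirect products $\Hom(P,Q)\rtimes(G\times\CL(P))$ having the same rational homology as $G\times\CL(P)$ in low degrees, and (b) a surjectivity statement $\HH_k^{\CL(P)}(\Lines(P);\Q)\twoheadrightarrow\HH_k(\CL(P);\Q)$ for an auxiliary complex of lines. Neither of these yields injectivity of $\iota_*$; indeed the introduction explicitly remarks that an optimal homological stability theorem for $\SL_n(\cO)$ is not known, and the whole point of the paper's method is to get information ``well outside the known stable range'' without proving such stability. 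Your required injectivity at $k=\rk(P)-1$ is essentially slope-one stability, which is exactly what is unavailable. The sketch via Borel plus Hochschild--Serre does not close this: Borel computes the stable value, not injectivity of the individual stabilization maps in this range.

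The paper's route is genuinely different. Instead of stabilizing upward, it works with the equivariant homology $\HH_\ast^{\CL(P)}(\Lines(P);\Q)$, where $\Lines(P)$ is the semisimplicial set of partial line decompositions of $P$. Using an auxiliary Cohen--Macaulay complex it shows this equivariant homology surjects onto $\HH_k(\CL(P);\Q)$ for $k\le\rk(P)-1$, so it suffices to show $\GL(P)/\CL(P)$ acts trivially on the equivariant homology. That is done via the isotropy spectral sequence: the stabilizer of a simplex $(L_1,\dots,L_q)$ contains, up to finite index and a $\Hom$-kernel handled by the semidirect-product lemma, the abelian-by-$\CL(P'')$ group $\CL(L_1)\times\cdots\times\CL(L_{q-1})\times\CL(P'')$, on which a diagonal unit of norm $-1$ in one $\GL(L_i)\cong\cO^\times$ commutes with everything. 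This gives triviality on each $E^1_{pq}$ for $p\le r+s-1$, hence on $E^\infty$, hence (by semisimplicity of $\Q[\Z/2]$) on the abutment. So the paper trades your hard global injectivity for many easy local commutativity checks; your reduction is clean, but the lemma you defer to is precisely the hard input the paper is designed to avoid.
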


We will prove Propositions \ref{proposition:carry} and \ref{proposition:projectiveactiontrivial} in \S \ref{section:splitflag} and \S \ref{section:trivial}, respectively. 
Here we will assume their truth and derive
Proposition \ref{proposition:flagactiontrivial}.

\begin{proof}[Proof of Proposition \ref{proposition:flagactiontrivial}, assuming
Propositions \ref{proposition:carry} and \ref{proposition:projectiveactiontrivial}]
Let us recall the setup.  Let $\cO$ be the ring of integers in a number field $K$ and let
$\fF$ be a flag in $K^n$.  Assume that $\cO^{\times}$ contains an element of
norm $-1$, and let $r$ and $2s$ be the numbers of real and complex embeddings
of $K$.  Consider some $0 \leq k \leq \min(r+s,n)-1$.
We must prove that
the action of $\GL_n(\cO,\fF)$ on its normal subgroup $\CL_n(\cO,\fF)$ induces
the trivial action on $\HH_k(\CL_n(\cO,\fF);\Q)$. \setlength{\parskip}{\baselineskip}

Let $\fP = (P_0,\ldots,P_m)$ be a projective splitting of $\cO^n$ with respect to $\fF$.  By Proposition 
\ref{proposition:carry}, the map 
\[\HH_k(\CL_n(\cO,\fP);\Q) \rightarrow \HH_k(\CL_n(\cO,\fF);\Q)\]
is surjective.  The K\"{u}nneth formula says that
\begin{align}
\HH_k(\CL_n(\cO,\fP);\Q) &= \HH_k(\CL(P_0) \times \cdots \times \CL(P_m);\Q) \notag\\
&\cong \bigoplus_{i_0+\cdots+i_m=k} \HH_{i_0}(\CL(P_0);\Q) \otimes \cdots \otimes \HH_{i_m}(\CL(P_m);\Q). \label{eqn:kunneth}
\end{align}
It is thus enough to show that $\GL(\cO,\fF)$ acts trivially on the images of each
of these factors in $\HH_k(\CL_n(\cO,\fF);\Q)$.

Consider a summand
\[V = \HH_{i_0}(\CL(P_0);\Q) \otimes \cdots \otimes \HH_{i_m}(\CL(P_m);\Q)\]
of \eqref{eqn:kunneth}.  Since inner automorphisms always act trivially on homology
and $\CL_n(\cO,\fF)$ is an index-$2$ subgroup of $\GL_n(\cO,\fF)$, it is enough
to find a single element of $\GL_n(\cO,\fF) \setminus \CL_n(\cO,\fF)$ that acts trivially
on the image of $V$ in $\HH_k(\CL_n(\cO,\fF);\Q)$.  Since
\[i_0+\cdots+i_m = k \leq \min(r+s,n-1) \quad \text{and} \quad \rk(P_0)+\cdots+\rk(P_m) = n,\]
there must exist some $0 \leq j \leq m$ such that $i_j \leq \min(r+s,\rk(P_j))-1$.  We can thus
apply Proposition \ref{proposition:projectiveactiontrivial} to see that
$\GL(P_j)$ acts trivially on $\HH_{i_j}(\CL(P_j);\Q)$.  Pick $x_j \in \GL(P_j)$
such that $x_j \notin \CL(P_j)$ (and thus $\chi(x_j)=-1$).  For $0 \leq j' \leq m$
with $j' \neq j$, set $x_{j'} = 1 \in \GL(P_{j'})$.  Set 
\[x = (x_0,\ldots,x_m) \in \GL(P_0) \times \cdots \times \GL(P_m) = \GL_n(\cO,\fP).\]
We thus have $x \notin \CL_n(\cO,\fF)$, and by construction $x$ acts trivially
on the image of $V$ in $\HH_k(\CL_n(\cO,\fP);\Q)$ and hence also on the image
of $V$ in $\HH_k(\CL_n(\cO,\fF);\Q)$, as desired.
\end{proof}

\section{The homology carried on a split flag}
\label{section:splitflag}

In this section, we will prove Proposition \ref{proposition:carry}.  
We start in \S \ref{section:decompose} with a basic structural result about flag
stabilizers, and then in \S \ref{section:carry} we reduce the proof to a
simpler homological lemma whose proof occupies the remaining subsections of
this section.

\subsection{Decomposing stabilizers of flags}
\label{section:decompose}

Let $\cO$ be the ring of integers in an algebraic number field $K$ and let
$Q$ be a finite-rank projective $\cO$-module.  Proposition \ref{proposition:carry} 
concerns the homology of the $\GL(Q)$-stabilizer of a flag.  This section
shows how to decompose this stabilizer as a semidirect product.

\p{Motivating example}
To understand the form this decomposition takes, we start with a familiar example.
Let $\Gamma \subset \GL_{n+n'}(\R)$ be the subgroup consisting of matrices
with an $n' \times n$ block of zeros in their lower left hand corner:
\[\Gamma = \Set{$\left(
\begin{array}{@{}c|c@{}} 
A & \ast \\
\hline
0 & B
\end{array}\right)$}{$A \in \GL_n(\R)$ and $B \in \GL_{n'}(\R)$}.\]
The group $\Gamma$ contains the subgroups
\[\GL_n(\R) \times \GL_{n'}(\R) = \Set{$\left(
\begin{array}{@{}c|c@{}} 
A & 0 \\
\hline
0 & B
\end{array}\right)$}{$A \in \GL_n(\R)$ and $B \in \GL_{n'}(\R)$}\]
and
\[\Mat_{n,n'}(\R) = \Set{$\left(
\begin{array}{@{}c|c@{}} 
1 & U \\
\hline
0 & 1
\end{array}\right)$}{$U \in \Mat_{n,n'}(\R)$}.\]
The additive subgroup $\Mat_{n,n'}(\R)$ is normal, and
\begin{equation}
\label{eqn:togeneralize}
\Gamma = \Mat_{n,n'}(\R) \rtimes (\GL_n(\R) \times \GL_{n'}(\R)).
\end{equation}
The action of $\GL_n(\R) \times \GL_{n'}(\R)$ on $\Mat_{n,n'}(\R)$ in
\eqref{eqn:togeneralize} arises from the identification
$\Mat_{n,n'}(\R) = \Hom(\R^{n'},\R^n)$.

\p{Our decomposition}
Our analogue of \eqref{eqn:togeneralize} is as follows:

\begin{lemma}
\label{lemma:decompose}
Let $\cO$ be the ring of integers in a number field $K$, let
$Q$ be a rank-$n$ projective $\cO$-module, let
$\fF$ be a flag in $Q \otimes K = K^n$, and let $\fP=(P_0,\ldots,P_{t})$ 
be a projective splitting of $Q$ with respect to $\fF$.  Set $Q' = P_0 \oplus \cdots \oplus P_{t-1}$,
so $Q = Q' \oplus P_{t}$, and let $\fF'$ be the flag in $Q' \otimes K$
obtained by omitting the last term of $\fF$.  Then
$\GL(Q,\fF) = \Hom(P_{t},Q') \rtimes (\GL(Q',\fF') \times \GL(P_{t}))$.
\end{lemma}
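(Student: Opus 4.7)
The plan is to exhibit the claimed decomposition by constructing an explicit short exact sequence together with a splitting. The key preliminary observation is that $\fF_{t-1} \cap Q = Q'$, so any $g \in \GL(Q,\fF)$ preserves the subspace $\fF_{t-1} \subset Q \otimes K$ and therefore preserves the $\cO$-submodule $Q' = \fF_{t-1} \cap Q \subset Q$. This is what makes it possible to define the ``quotient'' projection below, and is really the only non-tautological input.

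First I would define an embedding $\iota \colon \Hom(P_t, Q') \hookrightarrow \GL(Q, \fF)$ by sending $\phi$ to the automorphism $q' + p \mapsto (q' + \phi(p)) + p$ of $Q = Q' \oplus P_t$. A direct computation gives $\iota(\phi) \circ \iota(\psi) = \iota(\phi + \psi)$, so $\iota$ is an injective group homomorphism (with inverse $\iota(-\phi)$ of each $\iota(\phi)$). The image of $\iota$ preserves $\fF$ because $\iota(\phi)$ fixes $Q' \otimes K = \fF_{t-1}$ pointwise, and $\fF_i \subseteq \fF_{t-1}$ for all $i \leq t-2$.

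Next I would define a homomorphism $\pi \colon \GL(Q,\fF) \to \GL(Q', \fF') \times \GL(P_t)$ by $\pi(g) = (g|_{Q'}, \overline{g})$, where $\overline{g}$ is the automorphism of $Q/Q' \cong P_t$ induced by $g$. This is well-defined by the preliminary observation: $g|_{Q'}$ lies in $\GL(Q')$, and since $g$ preserves each $\fF_i$ with $i \leq t-2$ (all of which lie in $\fF_{t-1} = Q' \otimes K$), the restriction $g|_{Q'}$ preserves $\fF'$. A splitting $\sigma \colon \GL(Q', \fF') \times \GL(P_t) \to \GL(Q, \fF)$ of $\pi$ is given by $\sigma(g_1, g_2) = g_1 \oplus g_2$ using the decomposition $Q = Q' \oplus P_t$; this is clearly a right inverse to $\pi$ and lands in $\GL(Q, \fF)$.

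Finally I would verify exactness at the middle term. If $g \in \ker \pi$, then $g|_{Q'} = \Id$ and $g$ descends to $\Id$ on $P_t$, so for each $p \in P_t$ there is a unique $\phi(p) \in Q'$ with $g(p) = \phi(p) + p$; the resulting $\phi \in \Hom(P_t, Q')$ satisfies $\iota(\phi) = g$. Combining the exactness $1 \to \Hom(P_t, Q') \to \GL(Q, \fF) \to \GL(Q', \fF') \times \GL(P_t) \to 1$ with the splitting $\sigma$ yields the asserted semidirect product decomposition. A brief conjugation calculation, $\sigma(g_1, g_2) \iota(\phi) \sigma(g_1, g_2)^{-1} = \iota(g_1 \circ \phi \circ g_2^{-1})$, shows that the action of $\GL(Q', \fF') \times \GL(P_t)$ on $\Hom(P_t, Q')$ is the natural bimodule action. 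No serious obstacle is expected; all the work is organizational, and the only conceptual point is the fact that stabilizing the $K$-subspace $\fF_{t-1}$ automatically forces an automorphism of $Q$ to stabilize its intersection with $Q$.
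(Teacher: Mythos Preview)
Your proposal is correct and follows essentially the same approach as the paper's proof: both construct the projection $\GL(Q,\fF)\to\GL(Q',\fF')\times\GL(P_t)$ via restriction to $Q'$ and the induced map on $Q/Q'\cong P_t$, split it by the block-diagonal inclusion, and identify the kernel with $\Hom(P_t,Q')$ via $\phi\mapsto\bigl((x,y)\mapsto(x+\phi(y),y)\bigr)$. Your write-up is simply more explicit about the verifications (and additionally records the conjugation action), but the argument is the same.
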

\begin{proof}
Elements of $\GL(Q,\fF)$ preserve $Q'$, and thus also act on $Q/Q'$, which
we can identify with $P_t$.  
Combining the resulting homomorphisms $\GL(Q,\fF) \rightarrow \GL(Q',\fF')$
and $\GL(Q,\fF) \rightarrow \GL(P_t)$, we get a homomorphism
$\phi\colon \GL(Q,\fF) \rightarrow \GL(Q',\fF') \times \GL(P_t)$.  The homomorphism
$\phi$ is a split surjection via the evident inclusion
$\GL(Q',\fF') \times \GL(P_t) \hookrightarrow \GL(Q,\fF)$.  Letting $U = \ker(\phi)$,
we have $U \cong \Hom(P_t,Q')$ via the identification that takes
$f\colon P_t \rightarrow Q'$ to the automorphism of $Q$ taking
$(x,y) \in Q' \oplus P_t = Q$ to $(x+f(y),y)$.  The lemma follows.
\end{proof}

\subsection{A reduction}
\label{section:carry}

In this section, we reduce Proposition \ref{proposition:carry} to the following lemma.
For later use, we state the lemma in more generality than we need.

\begin{lemma}
\label{lemma:spectralsequence}
Let $\cO$ be the ring of integers in a number field $K$ and let $Q$ and $P$
be finite-rank projective $\cO$-modules.  Assume that $\cO$ contains an element
of norm $-1$, and let $r$ and $2s$ be the numbers of real and complex
embeddings of $K$.  Let $G$ be an arbitrary subgroup of $\GL(Q)$ and let
$\Gamma = \Hom(P,Q) \rtimes (G \times \CL(P))$.  Then the map
$\HH_k(G \times \CL(P);\Q) \rightarrow \HH_k(\Gamma;\Q)$ is an isomorphism
for $0 \leq k \leq r+s-1$.
\end{lemma}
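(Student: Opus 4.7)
The plan is to exploit the Lyndon--Hochschild--Serre spectral sequence
$$E^2_{p,q} = \HH_p(G \times \CL(P); \textstyle\bigwedge^q_\Q V) \Longrightarrow \HH_{p+q}(\Gamma; \Q)$$
arising from the split short exact sequence
$$1 \longrightarrow \Hom(P, Q) \longrightarrow \Gamma \longrightarrow G \times \CL(P) \longrightarrow 1,$$
where $V := \Hom(P, Q) \otimes_\Z \Q$. Here I use that $\Hom(P, Q)$ is a finitely generated free abelian group, so its rational group homology is the exterior algebra on $V$. Because the extension splits, the map $\HH_k(G \times \CL(P); \Q) \to \HH_k(\Gamma; \Q)$ in the statement is automatically a split injection (with the projection-induced map as retract), and the lemma reduces to showing that $E^2_{p,q} = 0$ for $q \geq 1$ and $p + q \leq r+s-1$.

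For this vanishing, I would prove the stronger claim that $\HH_p(G \times \CL(P); \bigwedge^q_\Q V) = 0$ in \emph{all} degrees $p$ as soon as $1 \leq q \leq r+s-1$. The key tool is the central subgroup $U \leq \CL(P)$ of scalar automorphisms $\lambda \cdot \Id_P$, where $\lambda$ ranges over those units of $\cO^\times$ for which $\lambda \cdot \Id_P \in \CL(P)$ (all of $\cO^\times$ if $\rk(P)$ is even, the norm-$1$ units if $\rk(P)$ is odd). Since $G$ acts only on $Q$, the subgroup $U$ is central in $G \times \CL(P)$, and by Dirichlet's unit theorem (combined with the assumption that $\cO^\times$ has an element of norm $-1$, which in the odd case forces $U$ to have index exactly $2$ in $\cO^\times$) the abelian group $U$ has free rank precisely $r+s-1$. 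Running the Lyndon--Hochschild--Serre spectral sequence for the central extension $1 \to U \to G \times \CL(P) \to H \to 1$ with coefficients in $W = \bigwedge^q_\Q V$, and invoking the standard fact that a finitely generated abelian group acting on a rational vector space $W$ with $W^U = 0$ has $\HH_*(U; W) = 0$ (any nontrivial character on the free part of $U$ kills the corresponding Koszul complex over $\Q$), reduces the task to showing that $(\bigwedge^q_\Q V)^U = 0$ for $1 \leq q \leq r+s-1$.

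To verify this invariants vanishing, I would extend scalars to $\C$. The decomposition $K \otimes_\Q \C \cong \C^{r+2s}$ indexed by the $\Q$-embeddings $\sigma \colon K \hookrightarrow \C$ gives a splitting $V \otimes_\Q \C = \bigoplus_\sigma V_\sigma$, where each $V_\sigma$ has $\C$-dimension $\rk(P)\,\rk(Q)$ and a scalar $\lambda \cdot \Id_P \in U$ acts on $V_\sigma$ as multiplication by $\sigma(\lambda)^{-1}$. Consequently, on the direct summand $\bigotimes_\sigma \bigwedge^{q_\sigma}_\C V_\sigma$ of $\bigwedge^q_\C(V \otimes \C)$ (indexed by non-negative tuples $(q_\sigma)$ with $\sum_\sigma q_\sigma = q$), the group $U$ acts through the character $\lambda \mapsto \prod_\sigma \sigma(\lambda)^{-q_\sigma}$. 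If this character is trivial on $U$, then its absolute value is already trivial on the logarithmic image $L(U) \subset \R^{r+s}$; since $L(U)$ has finite index in the rank-$(r+s-1)$ lattice $L(\cO^\times)$ spanning the trace-zero hyperplane, this forces $(q_\sigma)$ to be proportional to the local-degree vector (entries $1$ at real places, $2$ at complex places), and hence $q = \sum q_\sigma$ must be a positive multiple of $[K : \Q] = r+2s$. But in the range $1 \leq q \leq r+s-1 < r+2s$, no such tuple $(q_\sigma)$ can occur, so $(\bigwedge^q_\Q V)^U = 0$ as desired.

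The anticipated main obstacle is the character-theoretic analysis in the final step: without the full force of Dirichlet's unit theorem identifying the logarithmic image of $\cO^\times$ as a \emph{full-rank} sublattice of the trace-zero hyperplane in $\R^{r+s}$, one cannot rule out nontrivial $U$-invariants in $\bigwedge^q_\Q V$ at the boundary value $q = r+s-1$. Everything else is formal manipulation of Lyndon--Hochschild--Serre spectral sequences for central extensions of groups over $\Q$.
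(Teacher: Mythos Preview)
Your proof is correct and follows essentially the same route as the paper: both use the Hochschild--Serre spectral sequence of the split extension, reduce to showing that the central subgroup of scalar units in $\CL(P)$ has no nonzero invariants on $\wedge^q V$ for $1 \leq q \leq r+s-1$, and verify this via Dirichlet's unit theorem. The only cosmetic difference is that the paper packages the endgame through the Zariski closure of $\cO_1^{\times}$ (Lemmas~\ref{lemma:recognize} and~\ref{lemma:identifyclosure}) and exhibits the explicit element $(2,\ldots,2,2^{-(r+s-1)})$ with no eigenvalue $1$, whereas you decompose $\wedge^q V_\C$ into $U$-characters and rule them out directly; these are equivalent uses of the same input.
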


The restriction on $k$ in the statement of Lemma \ref{lemma:spectralsequence} is the reason 
for the restriction on $n$ in Theorem \ref{maintheorem:highsmall}.
The proof of Lemma \ref{lemma:spectralsequence} occupies the remaining subsections of this
section.  Here we show how to derive Proposition \ref{proposition:carry} from it.

\begin{proof}[Proof of Proposition \ref{proposition:carry}, assuming Lemma \ref{lemma:spectralsequence}]
We first recall the setup.  Let $\cO$ be the ring of integers in a number field $K$, let
$Q$ be a rank-$n$ projective $\cO$-module, let
$\fF$ be a flag in $Q \otimes K = K^n$, and let $\fP$ be a projective splitting of $Q$ with respect to $\fF$.
Assume that $\cO^{\times}$ has an element of norm $-1$, and
let $r$ and $2s$ be the numbers of real and complex embeddings of $K$.  We must prove that
the map $\HH_k(\CL(Q,\fP);\Q) \rightarrow \HH_k(\CL(Q,\fF);\Q)$
is a surjection for $0 \leq k \leq r+s-1$.

Write $\fP = (P_0,\ldots,P_{t})$.  The proof will be by induction on $t$.
The base case $t=0$ being trivial,
assume that $t \geq 1$ and that the result is true for all smaller $t$.
Let $Q' = P_0 \oplus \cdots \oplus P_{t-1}$, so
$Q = Q' \oplus P_t$.  Let $\fF'$ be the flag in $Q' \otimes K$ obtained
by omitting the last term of $\fF$ and let $\fP'=(P_0,\ldots,P_{t-1})$, so
$\fP'$ is a projective splitting of $Q'$ with respect to $\fF'$.

Lemma \ref{lemma:decompose} says that
\begin{equation}
\label{eqn:breakgl}
\GL(Q,\fF) = \Hom(P_{t},Q') \rtimes (\GL(Q',\fF') \times \GL(P_{t})).
\end{equation}
We factor the map $\CL(Q,\fP) \rightarrow \CL(Q,\fF)$ as follows:
\begin{align*}
\CL(Q,\fP) &= \CL(Q',\fP') \times \CL(P_t) 
\stackrel{\phi_1}{\hookrightarrow} \CL(Q',\fF') \times \CL(P_t)\\
&\stackrel{\phi_2}{\hookrightarrow} \Hom(P_t,Q') \rtimes (\CL(Q',\fF') \times \CL(P_t))
\stackrel{\phi_3}{\hookrightarrow} \CL(Q,\fF).
\end{align*}
The map $\phi_3$ comes from identifying the indicated semidirect product with a subgroup of $\CL(Q,\fF)$ 
via \eqref{eqn:breakgl}.  It is enough to prove
that each $\phi_i$ induces a surjection on $\HH_k(-;\Q)$ for
$0 \leq k \leq r+s-1$:
\setlength{\parskip}{0pt}
\begin{compactitem}
\item For $\phi_1$, this comes from combining the K\"{u}nneth formula with our inductive
hypothesis, which implies that the map
$\HH_k(\CL(Q',\fP');\Q) \rightarrow \HH_k(\CL(Q',\fF');\Q)$ is a surjection
for $0 \leq k \leq r+s-1$.
\item For $\phi_2$, this follows from Lemma \ref{lemma:spectralsequence}.
\item For $\phi_3$, this follows from the fact that $\phi_3$ is the inclusion
of a finite-index subgroup and thus induces a surjection on $\HH_k(-;\Q)$
for all $k$, which follows from the existence of the transfer map (see, e.g.\
\cite[\S III.9]{BrownCohomology}).\qedhere
\end{compactitem}
\end{proof}
\setlength{\parskip}{\baselineskip}

\subsection{Killing homology with a center}
\label{section:kill}

We will prove Lemma \ref{lemma:spectralsequence} by studying the Hochschild--Serre
spectral sequence of the indicated semidirect product.
This spectral sequence is composed of various
twisted homology groups, and our goal will be to show that most of them vanish.  The
following lemma gives a simple criterion for showing this.
 
\begin{lemma}
\label{lemma:kill}
Let $G$ be a group and let $M$ be a finite-dimensional vector space over a field of characteristic $0$ upon which $G$ acts.
Assume that there exists a central element $c$ of $G$ that fixes no nonzero element of $M$.  Then
$\HH_k(G;M)=0$ for all $k$.
\end{lemma}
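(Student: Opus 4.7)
The plan is to exploit the central element $c$ in two opposing ways on $H_*(G;M)$: it will act both as the identity and, through $c-1$, as an isomorphism, forcing the homology to vanish.

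First I would observe that because $c$ lies in the center of $G$, multiplication by $c$ is a $G$-equivariant endomorphism $c \colon M \to M$; indeed, for any $g \in G$ and $m \in M$, the centrality gives $c \cdot (g \cdot m) = (cg) \cdot m = (gc) \cdot m = g \cdot (c \cdot m)$. This $G$-module map induces a map $c_*$ on $H_k(G;M)$ for every $k$, and the standard fact that inner automorphisms act trivially on group homology (see, e.g., \cite[Proposition II.6.2]{BrownCohomology}) shows that $c_* = \mathrm{id}$. Concretely, on a projective resolution $P_\bullet \to \Z$, the chain map on $P_\bullet \otimes_{\Z[G]} M$ induced by multiplication by $c$ on $M$ is chain homotopic to the map induced by conjugation by $c$ on $P_\bullet$ together with the identity on $M$, and conjugation by a central element is trivial. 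Consequently $c - 1$ acts as zero on $H_k(G;M)$ for every $k$.

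Next I would use the hypothesis that $c$ fixes no nonzero element of $M$. This means $c - 1 \colon M \to M$ has trivial kernel, and since $M$ is finite-dimensional over a field this forces $c - 1$ to be an isomorphism of vector spaces. It is also a $G$-module isomorphism, since $c - 1$ commutes with the $G$-action (again by centrality of $c$). Hence $c - 1$ induces an isomorphism on $H_k(G;M)$ for every $k$.

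Combining the two steps, $c - 1$ acts on $H_k(G;M)$ both as the zero map and as an isomorphism, so $H_k(G;M) = 0$ for all $k$. The only subtle point is invoking the triviality of the action of a central element on homology; beyond that the argument is essentially linear algebra. I do not anticipate any real obstacle.
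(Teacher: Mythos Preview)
Your argument is correct and is in fact the classical ``center kills'' trick, but it is genuinely different from the route the paper takes. The paper instead passes to the cyclic subgroup $C = \langle c \rangle$, runs the Hochschild--Serre spectral sequence for $1 \to C \to G \to G/C \to 1$, and then shows directly that $\HH_q(C;M) = 0$ for all $q$: the coinvariants $M_C$ vanish because $c-1$ is invertible, and the higher groups vanish either because $C$ is finite (and we are in characteristic $0$) or, when $C \cong \Z$, by Poincar\'e duality $\HH_1(C;M) \cong \HH^0(C;M) = M^C = 0$. Your approach avoids the spectral sequence and the finite/infinite case split entirely, using only the additivity of $\HH_*(G;-)$ in the coefficient variable together with the fact that a central element acts trivially on $\HH_*(G;M)$; this is shorter and more conceptual. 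The paper's approach, on the other hand, makes the vanishing visible already at the level of the cyclic subgroup, which some readers may find more concrete. One small remark: the reference for the triviality of the inner action in Brown is Proposition~III.8.1 rather than II.6.2 (the paper itself cites III.8.1 for this fact elsewhere), but this does not affect the argument.
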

\begin{proof}
Let $C$ be the cyclic subgroup of $G$ generated by $c$.  Since $c$ is central, the subgroup $C$ is central and
hence normal in $G$.  Define $Q=G/C$.  We thus have a short exact sequence
\[1 \longrightarrow C \longrightarrow G \longrightarrow Q \longrightarrow 1.\]
The associated Hochschild--Serre spectral sequence is of the form
\[E^2_{pq} = \HH_p(Q;\HH_q(C;M)) \Rightarrow \HH_{p+q}(G;M).\]
To prove that $\HH_k(G;M)=0$ for all $k$, it is enough to prove that all terms of this spectral
sequence vanish.  In fact, we will prove that $\HH_q(C;M)=0$ for all $q$.

Since $c$ fixes no nonzero element of $M$, the linear map $M \rightarrow M$ taking $x \in M$ to $cx-x \in M$ has a trivial
kernel.  It is thus an isomorphism, which immediately implies that the $C$-coinvariants
$\HH_0(C;M) = M_C$ vanish.  If $c$ has finite order, then $C$ is a finite group.  Since $M$ is
a vector space over a field of characteristic $0$, this implies that $\HH_q(C;M)=0$ for all $q \geq 1$, and
we are done.  Otherwise, $C \cong \Z$ and we also have to check that $\HH_1(C;M)=0$.  For this, we apply
Poincar\'{e} duality to $\Z$ (the fundamental group of a circle!) to see that
$\HH_1(C;M) \cong \HH^0(C;M) = M^C$.  These invariants vanish by assumption.
\end{proof}

The following lemma will help us recognize when Lemma \ref{lemma:kill} applies.

\begin{lemma}
\label{lemma:recognize}
Let $C$ be a group and let $M$ be a finite-dimensional vector space on which $C$ acts.  Let
$\phi\colon C \rightarrow \GL(M)$ be the associated homomorphism and let $\oC \subset \GL(M)$
be the Zariski closure of $\phi(C)$.  Assume that $\oC$ contains an element that fixes
no nonzero element of $M$.  Then $C$ does as well.
\end{lemma}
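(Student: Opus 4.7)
The plan is to observe that the condition ``$g$ fixes no nonzero element of $M$'' is a Zariski open condition on $g \in \GL(M)$, and then to exploit the fact that $\phi(C)$ is Zariski dense in $\oC$ by definition of the closure.

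More precisely, the first step is to identify the relevant locus. An element $g \in \GL(M)$ fixes no nonzero element of $M$ if and only if $1$ is not an eigenvalue of $g$, equivalently $\det(g - \Id_M) \neq 0$. Since $\det(g - \Id_M)$ is a polynomial function of the matrix entries of $g$, the set
\[ U = \SetLong{g \in \GL(M)}{\det(g - \Id_M) \neq 0} \]
is Zariski open in $\GL(M)$, being the complement of the vanishing locus of this polynomial.

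The second step is to combine this with the density hypothesis. By assumption, there exists some element of $\oC$ lying in $U$, so $U \cap \oC$ is a nonempty Zariski open subset of $\oC$. Since $\phi(C)$ is Zariski dense in $\oC$ by the definition of the Zariski closure, $\phi(C)$ must meet every nonempty Zariski open subset of $\oC$, and in particular $\phi(C) \cap U \neq \varnothing$. Any element $c \in C$ with $\phi(c) \in U$ has the desired property: the action of $c$ on $M$ fixes no nonzero vector.

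I do not expect any genuine obstacle here; the only thing to verify carefully is that $\det(g - \Id_M)$ really is a polynomial in the coordinates on $\GL(M) \subset \Mat(M)$ (it plainly is), so that $U$ is honestly Zariski open, and that density in the sense of closures gives intersection with every nonempty open set (which is immediate from the definition).
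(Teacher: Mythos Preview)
Your proof is correct and takes essentially the same approach as the paper: both identify the locus of elements fixing a nonzero vector as the Zariski-closed set $\{x \in \GL(M) : \det(x-1)=0\}$ and then use density of $\phi(C)$ in $\oC$ to conclude. The paper's version is phrased in terms of the closed complement rather than the open set $U$, but the argument is the same.
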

\begin{proof}
The set of $x \in \GL(M)$ that fix a nonzero element of $M$ is a Zariski-closed subspace; indeed,
it is precisely the set of all $x$ such that $\det(x-1)=0$.  By assumption, $\oC$ is not contained in it,
so $\phi(C)$ must not be as well.
\end{proof}

\subsection{The Zariski closure of units}
\label{section:zariski}

To apply Lemma \ref{lemma:kill} to the Hochschild--Serre spectral sequence associated
to the split short exact sequence
\[1 \longrightarrow \Hom(P,Q) \longrightarrow \Gamma \longrightarrow G \times \CL(P) \longrightarrow 1\]
discussed in Lemma \ref{lemma:spectralsequence}, we need some interesting central elements
of $G \times \CL(P)$.  Let $\cO^{\times}_1$ be the set of norm-$1$ units in $\cO$.  The
central elements we will use are in the subgroup $\cO^{\times}_1$ of $\CL(P)$, which acts
on $P$ as scalar multiplication.

We will want to apply Lemma \ref{lemma:recognize} to this, which requires identifying
the Zariski closure of $\cO^{\times}_1$ in an appropriate real
algebraic group.
To state the general result we will prove, let $r$ and $2s$ be the numbers
of real and complex embeddings of the algebraic number field $K$, 
so $\cO \otimes \R \cong \R^r \oplus \C^s$, where $\C^s$
is regarded as a $2s$-dimensional $\R$-vector space.  The group $\cO^{\times}$ acts on
$\cO \otimes \R$, providing us with a representation
\[\cO^{\times} \longrightarrow \GL(\cO \otimes \R) \cong \GL_{r+2s}(\R).\]
The following lemma identifies the Zariski closure of the image of 
$\cO^{\times}_1$ in $\GL(\cO \otimes \R)$ when $\cO^{\times}$ has an
element of norm $-1$, since any such $K$ has a real embedding.  

\begin{lemma}
\label{lemma:identifyclosure}
Let $\cO$ be the ring of integers in an algebraic number field $K$.  Assume that $K$ has a real embedding, and
let $r$ and $2s$ be the numbers of real and complex embeddings of $K$, so $\cO \otimes \R \cong \R^r \oplus \C^{s}$.  
The Zariski closure of the image of $\cO^{\times}_1$ in $\GL(\cO \otimes \R) \cong \GL_{r+2s}(\R)$ is
\[\SetLong{(a_1,\ldots,a_r,b_1,\ldots,b_s) \in (\R^{\times})^r \times (\C^{\times})^s}{\prod_{j=1}^r a_j \prod_{k=1}^s |b_k| = 1}.\]
\end{lemma}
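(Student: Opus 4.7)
The plan is to identify the set $C$ in the lemma with the $\R$-points of the $\Q$-anisotropic torus $\bfT^{(1)} := R^{(1)}_{K/\Q}(\bbG_m)$, the kernel of the norm character on the Weil restriction $\bfT := R_{K/\Q}(\bbG_m)$, and then to show that $\cO^{\times}_1$ is Zariski dense in $\bfT^{(1)}(\R)$.

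The inclusion $\cO^{\times}_1 \subseteq \bfT^{(1)}(\R) = C$ is immediate: the real determinant of multiplication by a unit $u$ on $\cO \otimes \R$ equals $N_{K/\Q}(u)$, so $N(u)=1$ translates directly into the condition defining $C$. For Zariski density, I would first pin down the identity component of the Zariski closure using character theory of real algebraic tori. The $\R$-rational character lattice of $\bfT$ is free abelian of rank $r+s$, with generators given by projection to each real factor and by the reduced norm $b_k \mapsto |b_k|^2$ on each complex factor. Such a character $\chi$ is trivial on $\cO^{\times}_1$ if and only if $\log|\chi|$ vanishes on the Dirichlet logarithmic image of $\cO^{\times}_1$, which by Dirichlet's unit theorem is a full rank-$(r+s-1)$ lattice inside the hyperplane $\{\sum y_i = 0\}$ of $\R^{r+s}$; this forces $\chi$ to be a power of the norm character, whose kernel is exactly $\bfT^{(1)}$. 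Hence the identity component of the Zariski closure of $\cO^{\times}_1$ equals $\bfT^{(1)}(\R)^0$.

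To also capture the full component group $\pi_0(\bfT^{(1)}(\R)) \cong (\Z/2)^{r-1}$, I would invoke that $\bfT^{(1)}$ is $\Q$-anisotropic: over $\overline{\Q}$ its character lattice is $\Z^n/\Z(1,\ldots,1)$, which has trivial Galois invariants. By Borel--Harish-Chandra, the arithmetic subgroup $\cO^{\times}_1 = \bfT^{(1)}(\Z)$ is then cocompact in $\bfT^{(1)}(\R)$. A cocompact discrete subgroup of a real abelian Lie group cannot be contained in any proper real-algebraic subgroup (otherwise the quotient would be non-compact), so the Zariski closure must equal all of $\bfT^{(1)}(\R)$.

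The main obstacle I anticipate is bookkeeping the component group: the character-theoretic step above pins down only the identity component of the Zariski closure, and to ensure the closure meets every sign-component of $\bfT^{(1)}(\R)$ one must either invoke the cocompactness input or carry out a direct signature argument leveraging the ambient assumption that $\cO^{\times}$ contains a norm-$(-1)$ element.
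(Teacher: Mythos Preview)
Your approach has two substantive gaps, both traceable to the fact that you never use the real-embedding hypothesis in an essential way---and the lemma is false without it (the paper's own remark gives $K=\Q(i)$ as a counterexample).

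\textbf{The character step is insufficient.} You restrict to $\R$-rational characters of $\bfT$, those generated by the $a_j$ and the $|b_k|^2$. Dirichlet does show that the only such character trivial on $\cO_1^{\times}$ is a power of the norm, but this pins down only the maximal $\R$-\emph{split} quotient of the Zariski closure; it says nothing about the $\R$-anisotropic part, which for $s\geq 1$ is an $s$-dimensional compact factor of $\bfT^{(1)}_\R$ coming from the arguments $b_k/|b_k|$. A proper subtorus of $\bfT^{(1)}$ can have exactly the same $\R$-rational characters vanishing on it. To determine the Zariski closure one must work with the full geometric character lattice $\Z^{r+2s}$ indexed by all embeddings $K\hookrightarrow\overline{\Q}$ and show that any character $\prod_i f_i^{d_i}$ trivial on $\cO_1^{\times}$ has all $d_i$ equal. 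This is precisely the paper's Claim, and its proof is where the real embedding enters: after reordering so that $d_1$ is maximal one arranges (using the real embedding) for $f_1$ to be real; then taking $|\,\cdot\,|^2$ and applying Dirichlet forces $2d_1 = d_{r+k}+d_{r+k+s}$, and maximality of $d_1$ yields $d_{r+k}=d_{r+k+s}=d_1$.

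\textbf{The cocompactness step is false.} A cocompact discrete subgroup of an abelian real Lie group \emph{can} lie in a proper real-algebraic subgroup. For $K=\Q(i)$ one has $\bfT^{(1)}(\R)=S^1$, and $\cO_1^{\times}=\mu_4$ is trivially cocompact yet has Zariski closure $\mu_4$; the quotient $S^1/\mu_4\cong S^1$ is compact, so your ``otherwise the quotient would be non-compact'' fails. Your argument is valid only when $\bfT^{(1)}(\R)$ has no compact factor, i.e.\ when $K$ is totally real. The alternative you float at the end---using a norm-$(-1)$ unit---is not available, since the lemma only assumes a real embedding.

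Finally, a minor point: the set displayed in the lemma (with $|b_k|$ rather than $|b_k|^2$) is \emph{not} $\bfT^{(1)}(\R)$, and in fact does not even contain $\cO_1^{\times}$ when $s\geq 1$; the norm-one condition is $\prod a_j\prod |b_k|^2=1$. The paper's own proof (whose ideal is generated by $x_1\cdots x_{r+2s}-1=\prod a_j\prod|b_k|^2-1$) establishes the $|b_k|^2$ version, so this appears to be a typo in the statement.
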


\begin{remark}
Lemma \ref{lemma:identifyclosure} is not true for all algebraic number fields.  For instance, the norm-$1$ units
in $\Z[i]$ are $\{\pm 1, \pm i\}$, which are not Zariski dense in $\Set{$b \in \C^{\times}$}{$|b|=1$}$.  It turns
out that the conclusion of Lemma \ref{lemma:identifyclosure} holds if and only if $K$ does not contain a CM subfield.
We will not need this stronger result, so we prove only the above for the sake of brevity.
\end{remark}

Lemma \ref{lemma:identifyclosure} could be deduced from general results about algebraic tori (see, e.g.,
\cite[Appendix to Chapter 2]{SerreLadic}).  To make this paper more self-contained, we include
an elementary proof.  We would like to thank Will Sawin for showing it to us.  

Our proof will require a consequence of the standard proof of the Dirichlet Unit Theorem (see,
e.g., \cite[\S I.7]{NeukirchBook}).  Continuing the above notation, let
$f_1,\ldots,f_r \colon K \rightarrow \R$ and  
$f_{r+1},\overline{f}_{r+1},\ldots,f_{r+s},\overline{f}_{r+s}\colon K \rightarrow \C$ be the real
and complex embeddings of $K$.  For $x \in K$, the norm of $x$ equals
\begin{equation}
\label{eqn:normproduct}
f_1(x) \cdots f_r(x) \cdot |f_{r+1}|^2 \cdots |f_{r+s}|^2.
\end{equation}
For $x \in \cO^{\times}$, this will be $\pm 1$.  To convert the multiplication
in $\cO^{\times}$ into addition and also to eliminate the distinction between $\pm 1$,
we take absolute values and logarithms, and define $\Phi\colon K^{\times} \rightarrow \R^{r+s}$
via the formula
\[\Psi(x) = (\log |f_1(x)|,\ldots,\log |f_r(x)|, 2 \log |f_{r+1}(x)|,\ldots,2 \log |f_{r+s}(x)|) \in \R^{r+s}.\]
For $x \in \cO^{\times}$, the fact that \eqref{eqn:normproduct} is $\pm 1$ implies that
\[\log |f_1(x)| + \cdots + \log |f_r(x)| + 2 \log|f_{r+1}(x)| + \cdots + 2 \log|f_{r+s}(x)| = 0.\]
In other words, for $x \in \cO^{\times}$ the image $\Psi(x) \in \R^{r+s}$ lies
in the hyperplane
\[H = \Set{$(x_1,\ldots,x_{r+s})$}{$x_1 + \cdots + x_{r+s} = 0$}.\]
The key step in the proof of the Dirichlet Unit Theorem is showing that
$\Psi(\cO^{\times})$ is a lattice in $H$; see \cite[Theorem I.7.3]{NeukirchBook}.
Since the norm-$1$ units $\cO_1^{\times}$ are an index-$2$ subgroup of $\cO^{\times}$,
it follows that $\Psi(\cO_1^{\times})$ also forms a lattice in $H$.  As a
consequence, we deduce the following.

\begin{lemma}
\label{lemma:unittheorem}
Letting the notation be as above, consider $c_1,\ldots,c_{r+s} \in \R$ such
that
\[c_1 \log|f_1(x)| + \cdots + c_{r+s} \log|f_{r+s}(x)| = 0 \quad \quad \text{for all $x \in \cO_1^{\times}$}.\]
Then $2 c_1 = \cdots = 2 c_r = c_{r+1} = \cdots = c_{r+s}$.
\end{lemma}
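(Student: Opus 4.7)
The plan is to translate the hypothesis into the statement that a certain linear functional vanishes on the lattice $\Psi(\cO_1^\times) \subset H$, and then invoke the Dirichlet Unit Theorem (as quoted in the paragraph preceding the lemma) to extend vanishing to all of $H$.

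Concretely, I would first define a linear functional $L \colon \R^{r+s} \to \R$ by
\[
L(y_1, \ldots, y_{r+s}) = \sum_{i=1}^{r} c_i\, y_i + \sum_{k=1}^{s} \tfrac{c_{r+k}}{2}\, y_{r+k},
\]
chosen precisely so that $L(\Psi(x)) = c_1 \log|f_1(x)| + \cdots + c_{r+s} \log|f_{r+s}(x)|$ for every $x \in K^\times$. The hypothesis of the lemma therefore asserts that $L$ vanishes on the image $\Psi(\cO_1^\times)$.

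Next I would invoke the version of the Dirichlet Unit Theorem stated just before the lemma: $\Psi(\cO_1^\times)$ is a lattice in the hyperplane $H = \{x_1 + \cdots + x_{r+s} = 0\}$. Since a linear functional on $\R^{r+s}$ that vanishes on a lattice of full rank in $H$ must vanish on all of $H$, the restriction $L|_H$ is identically zero. Thus $L$ is a scalar multiple of the functional $y \mapsto y_1 + \cdots + y_{r+s}$, so there exists $\lambda \in \R$ with
\[
c_i = \lambda \ \ (1 \le i \le r) \quad \text{and} \quad \tfrac{c_{r+k}}{2} = \lambda \ \ (1 \le k \le s).
\]
Reading this off gives $2c_1 = \cdots = 2c_r = c_{r+1} = \cdots = c_{r+s} = 2\lambda$, which is exactly the conclusion.

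There is no real obstacle here, since all the content is absorbed into the cited fact that $\Psi(\cO_1^\times)$ spans $H$ as a lattice; the only thing to be careful about is the factor-of-two mismatch between the definition of $\Psi$ (where complex coordinates carry a $2\log|f_{r+k}|$) and the functional in the hypothesis (where the coefficient $c_{r+k}$ multiplies $\log|f_{r+k}|$ without the factor $2$). That mismatch is what makes the conclusion assert $2c_1 = c_{r+1}$ rather than $c_1 = c_{r+1}$, and it is handled transparently by building the factor $1/2$ into $L$ as above.
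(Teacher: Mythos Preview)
Your argument is correct and is exactly the approach the paper intends: the paper states the lemma as an immediate consequence of the fact (recorded just before the lemma) that $\Psi(\cO_1^\times)$ is a lattice in the hyperplane $H$, and your proposal simply spells out that deduction, including the factor-of-two bookkeeping.
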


\begin{proof}[Proof of Lemma \ref{lemma:identifyclosure}]
Let $f_1,\ldots,f_{r+2s}\colon \cO^{\times}_1 \rightarrow \omQ^{\times}$ 
be the restrictions to $\cO^{\times}_1$ of the different embeddings of
$K$ into $\omQ$, ordered in an arbitrary way.  
The norm of an element of $K$ is the product of its images
under the different embeddings of $K$ into $\omQ$, so since $\cO^{\times}_1$
consists of elements of norm $1$ we have $f_1 \cdots f_{r+2s} = 1$.
Let $\Lambda$ be the $\R$-algebra of $\C$-valued functions on $\cO^{\times}_1$.
Fix an embedding $\omQ \hookrightarrow \C$ and
let $\phi\colon \R[x_1^{\pm 1},\ldots,x_{r+2s}^{\pm 1}] \rightarrow \Lambda$ be the
algebra map taking $x_i$ to $f_i$.  We have $x_1 \cdots x_{r+2s}-1 \in \ker(\phi)$,
and the lemma is equivalent to the assertion that the
ideal $I$ in $\R[x_1^{\pm 1},\ldots,x_{r+2s}^{\pm 1}]$ generated by
$x_1 \cdots x_{r+2s}-1$ equals $\ker(\phi)$.  Note that this is
independent of the order of the embeddings $f_1,\ldots,f_{r+2s}$.

The starting point is the following special case.

\begin{claim}
Let $m \in \R[x_1^{\pm 1},\ldots,x_{r+2s}^{\pm 1}]$ be a monomial such that
$m-1 \in \ker(\phi)$.  Then $m-1 \in I$.
\end{claim}
\begin{proof}[Proof of claim]
Write $m = x_1^{d_1} \cdots x_{r+2s}^{d_{r+2s}}$ with each $d_i \in \Z$.  To
prove the claim, it is enough to prove that all the $d_i$ are equal.  Since
$m-1 \in \ker(\phi)$, the function
\[\phi(m) = f_1^{d_1} \cdots f_{r+2s}^{d_{r+2s}}\colon \cO^{\times}_1 \rightarrow \omQ\]
is the trivial character.
Reordering the $f_i$ if necessary, we can assume that $d_1 \geq d_i$ for
all $1 \leq i \leq r+2s$.  Since there is at least one real embedding of $K$,
we can change our embedding $\omQ \hookrightarrow \C$ by precomposing
it with an appropriate element of the absolute Galois group and
ensure that $f_1$ is a real embedding.
We finally reorder $f_2,\ldots,f_{r+2s}$
such that $f_1,\ldots,f_r$ are the real embeddings, such that $f_{r+1},\ldots,f_{r+2s}$
are the complex embeddings,
and such that $\overline{f}_{r+i} = f_{r+i+s}$ for all $1 \leq i \leq s$.

For $u \in \cO^{\times}_1$, we have $f_1^{d_1}(u) \cdots f_{r+2s}^{d_{r+2s}}(u)=1$, so
\begin{align*}
1 &= \left(f_1^{d_1}\left(u\right) \cdots f_{r+2s}^{d_{r+2s}}\left(u\right)\right) \overline{\left(f_1^{d_1}\left(u\right) \cdots f_{r+2s}^{d_{r+2s}}\left(u\right)\right)} \\
&= |f_1(u)|^{2d_1} \cdots |f_r(u)|^{2d_r} |f_{r+1}(u)|^{2d_{r+1}+2d_{r+1+s}} \cdots |f_{r+s}|^{2d_{r+s} + 2d_{r+2s}}.
\end{align*}
Taking logarithms and dividing by $2$, we see that
\begin{align*}
0 &= d_1 \log|f_1(u)| + \cdots + d_r \log|f_r(u)| \\
&\quad\quad +(d_{r+1}+d_{r+1+s}) \log|f_{r+1}(u)|
+ \cdots + (d_{r+s}+d_{r+2s}) \log|f_{r+s}(u)|
\end{align*}
for all $u \in \cO^{\times}_1$.  Lemma \ref{lemma:unittheorem} then implies
that
\[2d_1 = \cdots = 2d_r = d_{r+1}+d_{r+1+s} = \cdots = d_{r+s}+d_{r+2s}.\]
Since $d_1 \geq d_{r+i}$ and $d_1 \geq d_{r+i+s}$, the only way that we can
have $d_{r+i}+d_{r+i+s} = 2d_1$ is for $d_{r+i} = d_{r+i+s} = d_1$, so in fact
\[d_1 = d_2 = \cdots = d_{r+2s},\]
as desired.
\end{proof}

We now turn to the general case.  Consider a nonzero $\theta \in \ker(\phi)$.  Write 
\[\theta = \sum_{i=1}^k \lambda_i m_i \quad \text{with $m_i \in \R[x_1^{\pm 1},\ldots,x_{r+2s}^{\pm 1}]$ a monomial and $\lambda_i \in \R$}.\]
Collecting terms, we can assume that the $m_i$ are all distinct and that $\lambda_i \neq 0$ for all $i$.  
For $1 \leq i \leq k$, the image $\phi(m_i)\colon \cO^{\times}_1 \rightarrow \C$ is a character.  Since
distinct characters on an abelian group are linearly independent, it follows
that there are distinct $1 \leq j,j' \leq k$ such that $\phi(m_j) = \phi(m_{j'})$.
This implies that $\phi(m_j m_{j'}^{-1})$ is the trivial character, so $m_j m_{j'}^{-1}-1 \in \ker(\phi)$.
The above claim thus implies that $m_j m_{j'}^{-1}-1 \in I$, so
\[m_j - m_{j'} = m_{j'}(m_j m_{j'}^{-1}-1) \in I.\]
Subtracting $\lambda_j(m_j - m_{j'}) \in I$ from $\theta$ eliminates its $\lambda_j m_j$ term.  
Collecting terms in $\theta$ and repeating the above argument over and over again, we conclude
that $\theta \in I$, as desired.
\end{proof}

\subsection{The proof of Lemma \ref{lemma:spectralsequence}}
\label{section:spectralsequence}

We finally prove Lemma \ref{lemma:spectralsequence}.

\begin{proof}[Proof of Lemma \ref{lemma:spectralsequence}]
We start by recalling what want to prove.  Let $\cO$ be the ring of integers
in a number field $K$ and let $Q$ and $P$ be finite-rank projective $\cO$-modules.
Assume that $\cO$ contains an element
of norm $-1$, and let $r$ and $2s$ be the numbers of real and complex
embeddings of $K$.  Let $G$ be an arbitrary subgroup of $\GL(Q)$ and let
$\Gamma = \Hom(P,Q) \rtimes (G \times \CL(P))$.  Our goal
is to prove that the map
$\HH_k(G \times \CL(P);\Q) \rightarrow \HH_k(\Gamma;\Q)$ is an isomorphism
for $0 \leq k \leq r+s-1$.  It is a little easier (but equivalent) to prove this
with real coefficients.

The Hochschild--Serre spectral sequence for the split extension
\[1 \longrightarrow \Hom(P,Q) \longrightarrow \Gamma \longrightarrow G \times \CL(P) \longrightarrow 1\]
is of the form
\[E^2_{pq} = \HH_p(G \times \CL(P);\HH_q(\Hom(P,Q);\R)) \Rightarrow \HH_{p+q}(\Gamma;\R).\]
We have
\[E^2_{k0} = \HH_k(G \times \CL(P);\R),\]
and to prove the lemma it is enough to prove that $E^2_{pq}=0$ for all $p$ and all
$1 \leq q \leq r+s-1$.  Fix some $1 \leq q \leq r+s-1$.
The group $\CL(P)$ contains the central subgroup $\cO^{\times}_1$, which
acts on $P$ as scalar multiplication.  Combining Lemmas \ref{lemma:kill} and \ref{lemma:recognize},
it is enough to prove that the Zariski closure of the image of $\cO^{\times}_1$ in the group
$\GL(\HH_q(\Hom(P,Q);\R))$ contains an element that fixes no nonzero vector of
$\HH_q(\Hom(P,Q);\R)$.

We have
\[\HH_q\left(\Hom\left(P,Q\right);\R\right) \cong \wedge^q \left(\Hom\left(P,Q\right) \otimes \R\right).\]
We now identify $\Hom(P,Q) \otimes \R$:

\begin{claim}
Let $n = \rk(P)$ and $m=\rk(Q)$.  We then have
\[\Hom(P,Q) \otimes \R \cong \Mat_{n,m}(\cO \otimes \R).\]
\end{claim}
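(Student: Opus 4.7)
The plan is a routine extension-of-scalars computation in two stages. First, I would use the standard natural isomorphism
\[\Hom_{\cO}(P, Q) \otimes_{\Z} \R \;\cong\; \Hom_{\cO \otimes_\Z \R}\bigl(P \otimes_\Z \R,\, Q \otimes_\Z \R\bigr),\]
which holds because $P$ is a finitely generated projective $\cO$-module: the statement is obvious for $P = \cO^N$, and it passes to direct summands since $\Hom_\cO(P, Q)$ is a direct summand of $\Hom_\cO(\cO^N, Q) = Q^N$ whenever $P$ is. This reduces the task to computing $\Hom$ in the category of $\cO \otimes_\Z \R$-modules.

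Next, I would identify both $P \otimes_\Z \R$ and $Q \otimes_\Z \R$ as free modules over $\cO \otimes_\Z \R \cong \R^r \times \C^s$. The key observation is that $P \otimes_\cO K \cong K^n$, so extending scalars along each real or complex embedding of $K$ yields either $\R^n$ or $\C^n$, and assembling these gives
\[P \otimes_\Z \R \;=\; P \otimes_\cO (\cO \otimes_\Z \R) \;\cong\; (\R^n)^r \times (\C^n)^s \;\cong\; (\cO \otimes_\Z \R)^n.\]
The same argument gives $Q \otimes_\Z \R \cong (\cO \otimes_\Z \R)^m$. Plugging these into the first step identifies $\Hom_\cO(P,Q) \otimes_\Z \R$ with the space of $\cO \otimes \R$-linear maps between free modules of ranks $n$ and $m$, which is the matrix space $\Mat_{n, m}(\cO \otimes \R)$ (with the indexing convention used in \S \ref{section:decompose}).

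There is no real obstacle here; the argument is essentially bookkeeping. The only subtle point worth flagging explicitly is that the rank of $P$ as a projective $\cO$-module (namely $\dim_K P \otimes_\cO K = n$) coincides with the rank of $P \otimes_\Z \R$ over each field factor of $\cO \otimes_\Z \R$, so $P \otimes_\Z \R$ is genuinely \emph{free} of rank $n$ over $\cO \otimes \R$ rather than merely projective with a priori varying local ranks. This is why no use of the structure theorem for projective modules over a Dedekind domain is required.
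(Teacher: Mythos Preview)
Your argument is correct and takes a genuinely different route from the paper's.  The paper invokes the structure theorem for finitely generated projective modules over a Dedekind domain to write $P \cong \cO^{n-1} \oplus I$ and $Q \cong \cO^{m-1} \oplus J$ for nonzero ideals $I,J \subset \cO$, then explicitly describes $\Hom(P,Q)$ as a space of block matrices with entries in $\cO$, $I^{-1}$, $J$, and $JI^{-1}$, and finally observes that each of these fractional ideals becomes $K \otimes \R = \cO \otimes \R$ after tensoring with~$\R$.  Your approach instead uses the base-change isomorphism $\Hom_{\cO}(P,Q) \otimes_{\Z} \R \cong \Hom_{\cO \otimes \R}(P \otimes \R, Q \otimes \R)$ together with the fact that any finite-rank projective module becomes free after extension to the semisimple ring $\cO \otimes \R \cong \R^r \times \C^s$.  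Your approach is cleaner and more conceptual, and as you note it avoids the structure theorem entirely; the paper's approach is more hands-on but has the mild advantage of making the entrywise $\cO^{\times}_1$-action (used immediately afterward in the proof of Lemma~\ref{lemma:spectralsequence}) completely transparent.  In your framework that equivariance follows from the naturality of the base-change and freeness isomorphisms, which is fine but worth noting if you use this in the ambient argument.
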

\begin{proof}[Proof of claim]
By the classification of finitely generated projective modules over Dedekind domains (see, 
e.g.\ \cite[\S 1]{MilnorKTheory}), there exist nonzero ideals $I,J \subset \cO$ such
that $P = \cO^{n-1} \oplus I$ and $Q = \cO^{m-1} \oplus J$.  Using this identification, we see
$\Hom(P,Q)$ can be viewed as
\[\Set{$\left(
\begin{array}{@{}c|c@{}}
A & B \\
\hline
C & D
\end{array}\right)$}{$A \in \Mat_{m-1,n-1}(\cO)$, $B \in \Mat_{m-1,1}(I^{-1})$, $C \in \Mat_{1,n-1}(J)$, $D \in J I^{-1}$}.\]
Here $I^{-1} \subset K$ is the inverse of $I$ using the usual multiplication of fractional ideals
in a Dedekind domain.  The claim now follows from the fact that
\[\cO \otimes \R = J \otimes \R = I^{-1} \otimes \R = J I^{-1} \otimes \R = K \otimes \R.\qedhere\]
\end{proof}

From this, we see that the action of $\cO^{\times}_1$ on $\wedge^q(\Hom(P,Q) \otimes \R)$ can be
identified with the action of $\cO^{\times}_1$ on 
\[V:=\wedge^q (\cO \otimes \R)^{nm} \cong \wedge^q (\R^r \oplus \C^s)^{nm}.\]
Identify $\GL(\R^r\oplus \C^s)$ as a Zariski-closed subgroup of
$\GL(V)$ in the natural way.
By Lemma \ref{lemma:identifyclosure}, the Zariski closure of the image of $\cO^{\times}_1$ in 
$\GL(V)$ can be identified with
\begin{equation}
\label{eqn:theclosure}
\Set{$(a_1,\ldots,a_r,b_1,\ldots,b_s) \in (\R^{\times})^r \times (\C^{\times})^s$}{$\prod_{j=1}^r a_j \prod_{k=1}^s |b_k| = 1$},
\end{equation}
which acts on $\R^r \oplus \C^s$ by scalar multiplication.  We claim that the element
\[x = (a_1,\ldots,a_r,b_1,\ldots,b_s) = (2,\ldots,2,\frac{1}{2^{r+s-1}})\]
fixes no nonzero vector in $V$.  Indeed, the eigenvalues for the action
of $x$ on $V$ lie in the set of elements that can be expressed as the product of
$q$ elements of $\{2,\frac{1}{2^{r+s-1}}\}$, and $q \leq r+s-1$, so $1$ cannot be
expressed in this form.
\end{proof}

\section{The action on automorphisms of projectives is trivial}
\label{section:trivial}

In this section, we prove Proposition \ref{proposition:projectiveactiontrivial}.  The actual
proof is in \S \ref{section:projectiveactiontrivialproof}.  This is preceded by two sections
of preliminary results.

\subsection{Equivariant homology}
\label{section:equivarianthomology}

Our proof of Proposition \ref{proposition:projectiveactiontrivial} will use
a bit of equivariant homology.
In this section, we review some standard facts about this.
See \cite[\S VII.7]{BrownCohomology} for a textbook reference.

\p{Semisimplicial sets}
The natural setting for our proof is that of semisimplicial sets, which are a technical
variant on simplicial complexes whose definition we briefly recall.  For more
details, see \cite{FriedmanSimplicial}, which calls them $\Delta$-sets.
Let $\Delta$ be the category with objects the sets $[k]=\{0,\ldots,k\}$ for $k \geq 0$
and whose morphisms $[k] \rightarrow [\ell]$ are the strictly increasing functions.
A {\em semisimplicial set} is a contravariant functor $X$ from $\Delta$ to the
category of sets.  The $k$-simplices of $X$ are the image $X^{(k)}$ of $[k] \in \Delta$.
The maps $X^{(\ell)} \rightarrow X^{(k)}$ corresponding to the $\Delta$-morphisms
$[k] \rightarrow [\ell]$ are called the {\em boundary maps}.

\p{Geometric properties}
A semisimplicial set $X$ has a geometric realization $|X|$
obtained by taking geometric $k$-simplices for each element of $X^{(k)}$ and then gluing
these simplices together using the boundary maps. Whenever we talk about topological properties
of a semisimplicial set, we are referring to its geometric realization.  An action of a
group $G$ on a semisimplicial set $X$ consists of actions of $G$ on each $X^{(k)}$ that commute
with the boundary maps.  This induces an action of $G$ on $|X|$.  The quotient
$X/G$ is naturally a semisimplicial set with $k$-simplices $X^{(k)}/G$.

\p{Definition of equivariant homology}
Let $G$ be a group and let $X$ be a semisimplicial set on which $G$ acts.  For a ring $R$, there are two
equivalent definitions of the equivariant homology groups $\HH_{\ast}^G(X;R)$:
\setlength{\parskip}{0pt}
\begin{compactitem}
\item Let $EG$ be a contractible semisimplical set on which $G$ acts freely, so
$EG/G$ is a $K(G,1)$.  The group $G$ then acts freely on $EG \times X$, and
$\HH_{\ast}^G(X;R)$ is the homology with coefficients in $R$ of the quotient
space $(EG \times X)/G$.  
\item Let $F_{\bullet} \rightarrow \Z$ be a projective resolution of the trivial $\Z[G]$-module $\Z$ and
let $C_{\bullet}(X;R)$ be the simplicial chain complex of $X$ with coefficients in $R$.  Then
$\HH_{\ast}^G(X;R)$ is the homology of the double complex $F_{\bullet} \otimes C_{\bullet}(X;R)$.
\end{compactitem}
Neither of these definitions depends on any choices. \setlength{\parskip}{\baselineskip}

\p{Functoriality}
Equivariant homology is functorial in the following sense.
If $G$ and $G'$ are groups acting on semisimplicial sets $X$ and $X'$, respectively,
and if $f\colon G \rightarrow G'$ is a group homomorphism and $\phi\colon X \rightarrow X'$
is a map such that $\phi(gx) = f(g) \phi(x)$ for all $g \in G$ and $x \in X$, then
we get an induced map $\HH_{\ast}^G(X;R) \rightarrow \HH_{\ast}^{G'}(X';R)$.

\p{Map to a point}
If $\{p_0\}$ is a single point on which $G$ acts trivially, then
$\HH_{\ast}^{G}(\{p_0\};R) = \HH_{\ast}(G;R)$.  For an arbitrary
semisimplicial set $X$ on which $G$ acts, the projection $X \rightarrow \{p_0\}$
thus induces a map $\HH_{\ast}^G(X;R) \rightarrow \HH_{\ast}(G;R)$.  For
this map, we have the following lemma.

\begin{lemma}
\label{lemma:highconnected}
Let $X$ be an $n$-connected semisimplicial set on which a group $G$ acts and
let $R$ be a ring.  Then then natural map $\HH_k^G(X;R) \rightarrow \HH_k(G;R)$
is an isomorphism for $k \leq n$ and a surjection for $k=n+1$.
\end{lemma}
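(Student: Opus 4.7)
The plan is to use the double-complex description of equivariant homology recalled above: fix a projective resolution $F_\bullet \to \Z$ of the trivial $\Z[G]$-module, so that $\HH_*^G(X;R)$ is the homology of the double complex $F_\bullet \otimes_{\Z[G]} \CC_\bullet(X;R)$. The constant map $X \to \{p_0\}$ induces the augmentation $\CC_\bullet(X;R) \to R$ concentrated in degree $0$, and hence a map of double complexes $F_\bullet \otimes_{\Z[G]} \CC_\bullet(X;R) \to F_\bullet \otimes_{\Z[G]} R$. On total homology this is exactly the natural map $\HH_k^G(X;R) \to \HH_k(G;R)$ appearing in the statement.

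Next I would analyze the spectral sequence of the source double complex obtained by first taking homology in the $\CC_\bullet$-direction. Its $E^2$-page is
$$E^2_{p,q} = \HH_p(G;\HH_q(X;R)) \Longrightarrow \HH_{p+q}^G(X;R).$$
Since $X$ is nonempty and $n$-connected, $\HH_0(X;R) = R$ with trivial $G$-action (any two points in a path-connected space represent the same class in $\HH_0$), and $\HH_q(X;R) = 0$ for $1 \leq q \leq n$. So row $q=0$ is $\HH_p(G;R)$ and rows $1 \leq q \leq n$ vanish. By naturality of the spectral sequence applied to the map of double complexes above, the edge map $\HH_k^G(X;R) \twoheadrightarrow E^\infty_{k,0} \hookrightarrow E^2_{k,0} = \HH_k(G;R)$ coincides with the map in the statement.

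Then I would chase diagonals. For $k \leq n$, every entry on the diagonal $p+q=k$ with $q \geq 1$ lies in a vanishing row, while $E^r_{k,0}$ supports no nonzero differentials: incoming differentials would originate in bidegrees with $q < 0$, and outgoing differentials $d^r\colon E^r_{k,0} \to E^r_{k-r,r-1}$ either land in the vanishing rows $1 \leq q \leq n$ (when $r \leq n+1$) or have source/target in bidegrees with negative $p$. Thus $E^\infty_{k,0} = \HH_k(G;R)$ accounts for all of $\HH_k^G(X;R)$, giving the asserted isomorphism. For $k = n+1$, the intermediate diagonal entries with $1 \leq p \leq n$ still vanish, and the same obstruction analysis shows that $E^r_{n+1,0}$ still supports no nonzero differentials; so it survives to $E^\infty_{n+1,0} = \HH_{n+1}(G;R)$ as the top associated-graded quotient of the filtration on $\HH_{n+1}^G(X;R)$, yielding the required surjection.

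The only delicate point is confirming that the spectral sequence edge map agrees with the map induced by $X \to \{p_0\}$, but this is a routine naturality check, since both double complexes carry the same row filtration and the map of double complexes respects it. Everything else reduces to formal bookkeeping with the vanishing of rows $1$ through $n$.
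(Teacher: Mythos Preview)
Your proof is correct and is precisely the standard spectral-sequence argument that the paper has in mind: the paper simply cites \cite[Proposition VII.7.3]{BrownCohomology} and remarks that its proof goes through under the weaker $n$-connectedness hypothesis, and what you have written is exactly that adaptation. The only thing to note is that you have correctly identified the edge map with the map induced by $X \to \{p_0\}$ via naturality, which is the one point the paper leaves entirely to the reference.
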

\begin{proof}
See \cite[Proposition VII.7.3]{BrownCohomology}.  This
reference assumes that $X$ is contractible, but its proof gives the desired conclusion
when $X$ is assumed to be merely $n$-connected.
\end{proof}

\p{The spectral sequence}
One of the main calculational tools for equivariant homology is as follows.

\begin{lemma}
\label{lemma:equivspectralsequence}
Let $X$ be a semisimplicial set on which a group $G$ acts and let $R$ be a ring.
For each simplex $\sigma$ of $X/G$, let $\tsigma$ be a lift of $\sigma$ to $X$ and
let $G_{\tsigma}$ be the stabilizer of $\tsigma$.  Then there is a spectral
sequence
\[E^1_{pq} \cong \bigoplus_{\sigma \in (X/G)^{(q)}} \HH_p(G_{\tsigma};R) \Rightarrow \HH_{p+q}^G(X;R).\]
\end{lemma}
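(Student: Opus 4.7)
The plan is to use the second (double complex) description of $\HH^G_*(X;R)$ stated in the preceding discussion, and to read off the spectral sequence from one of the two standard filtrations of a first-quadrant double complex. Let $F_\bullet \to \Z$ be a projective $\Z[G]$-resolution of $\Z$, and consider the double complex
\[
D_{pq} = F_p \otimes_{\Z[G]} C_q(X;R),
\]
whose total complex computes $\HH^G_*(X;R)$ by definition. I filter $D$ by the simplicial direction (columns indexed by $q$). Taking homology first in the $p$-direction amounts to computing $\Tor^{\Z[G]}_p(\Z, C_q(X;R))$, so the spectral sequence has
\[
E^1_{pq} = \HH_p\!\left(G;\, C_q(X;R)\right) \;\Longrightarrow\; \HH^G_{p+q}(X;R).
\]
This is the right-hand side of the claimed convergence; it remains only to rewrite the $E^1$ entries as direct sums over orbits.

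\textbf{Decomposing the chain module.} For each orbit $\sigma \in (X/G)^{(q)}$, fix a lift $\tsigma \in X^{(q)}$ with stabilizer $G_{\tsigma}$. Since $X$ is a semisimplicial set, an element $g \in G$ that fixes $\tsigma$ as an element of the set $X^{(q)}$ also fixes its geometric realization pointwise, because the characteristic map $\Delta^q \to |X|$ is determined by the semisimplicial data that $G$ preserves. In particular, $G_{\tsigma}$ acts trivially on the oriented simplex $\tsigma$, so no sign character appears. Consequently,
\[
C_q(X;R) \;\cong\; \bigoplus_{\sigma \in (X/G)^{(q)}} \Ind_{G_{\tsigma}}^G R
\]
as $\Z[G]$-modules. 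Applying Shapiro's lemma to each summand and summing yields
\[
E^1_{pq} \;\cong\; \bigoplus_{\sigma \in (X/G)^{(q)}} \HH_p(G_{\tsigma};R),
\]
as desired.

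\textbf{Main potential obstacle.} The only point that is not pure bookkeeping is the verification that stabilizers act without sign twists on their simplices; this is a feature of the semisimplicial category, and would fail for $\Delta$-complexes or general CW complexes, where one would need orientation characters $\chi_{\tsigma}\colon G_{\tsigma} \to \{\pm 1\}$ and the summand would become $\Ind_{G_{\tsigma}}^G R_{\chi_{\tsigma}}$. Once that point is in place, the convergence of the spectral sequence is the standard first-quadrant convergence for a double complex, and the identification of its abutment with $\HH^G_*(X;R)$ is simply the definition adopted in \S \ref{section:equivarianthomology}.
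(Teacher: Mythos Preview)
Your proof is correct and is precisely the argument the paper has in mind: the paper's proof is simply a citation to \cite[VII.(7.7)]{BrownCohomology}, where Brown carries out exactly the double-complex filtration and Shapiro's lemma decomposition you describe. Your observation that stabilizers in a semisimplicial set act without orientation signs (because $\Delta$-morphisms are strictly increasing) is the one substantive point, and it is correct.
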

\begin{proof}
See \cite[VII.(7.7)]{BrownCohomology}.
\end{proof}

\p{Group actions on equivariant homology}
Now let $\Gamma$ be a group acting on a semisimplicial set $X$ and let $G$ be a normal subgroup of $\Gamma$.
For $\gamma \in \Gamma$, the maps $G \rightarrow G$ and $X \rightarrow X$ taking $g \in G$ to $\gamma g \gamma^{-1}$
and $x \in X$ to $\gamma x$ induce a map $\HH_{\ast}^{G}(X;R) \rightarrow \HH_{\ast}^G(X;R)$.  This recipe
gives an action of $\Gamma$ on $\HH_{\ast}^{G}(X;R)$.  The restriction of this action to $G$ is trivial (this
can be proved just like \cite[Proposition III.8.1]{BrownCohomology}, which proves that inner automorphisms
act trivially on ordinary group homology), so we get an induced action
of $\Gamma/G$ on $\HH_{\ast}^G(X;R)$.  It is clear from its
construction that the spectral sequence in Lemma
\ref{lemma:equivspectralsequence} is a spectral sequence of $R[\Gamma/G]$-modules.

\subsection{The complex of lines}
\label{section:lines}

Let $\cO$ be the ring of integers in a number field $K$ and let $P$ be a finite-rank projective
$\cO$-module.  Assume that $\cO^{\times}$ has an element of norm $-1$, so we can talk about
the group $\CL(P)$.  This group acts on the following space.

\begin{definition}
Let $\cO$ be the ring of integers in a number field $K$ and let $P$ be a finite-rank projective $\cO$-module.
A {\em line decomposition} of $P$ is an ordered sequence $(L_1,\ldots,L_n)$ of rank-$1$ projective
submodules of $P$ such that $P = L_1 \oplus \cdots \oplus L_n$.  The {\em complex of lines} in $P$, denoted
$\Lines(P)$, is the semisimplicial set whose $(k-1)$-simplices are ordered sequences $(L_1,\ldots,L_k)$
of rank-$1$ projective submodules of $P$ that can be extended to a line decomposition $(L_1,\ldots,L_n)$.
\end{definition}

We thus have the equivariant homology groups $\HH_k^{\CL(P)}(\Lines(P);\Q)$.  Our main result
about these equivariant homology groups is as follows.

\begin{lemma}
\label{lemma:surjection}
Let $\cO$ be the ring of integers in a number field $K$ and let $P$ be a finite-rank projective $\cO$-module.
Assume that $\cO^{\times}$ has an element of norm $-1$.  Then the natural
map $\HH_k^{\CL(P)}(\Lines(P);\Q) \rightarrow \HH_k(\CL(P);\Q)$ is a surjection for
$0 \leq k \leq \rk(P)-1$.
\end{lemma}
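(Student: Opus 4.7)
The plan is to deduce the surjectivity directly from Lemma \ref{lemma:highconnected} applied with $G=\CL(P)$ and $X=\Lines(P)$. That lemma gives an isomorphism $\HH_k^G(X;R)\cong\HH_k(G;R)$ for $k\leq n$ and a surjection for $k=n+1$ whenever $X$ is $n$-connected, so it suffices to establish that $\Lines(P)$ is $(\rk(P)-2)$-connected. Note that the action of $\CL(P)$ plays no role in this reduction; only the topology of $\Lines(P)$ matters.

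\textbf{High connectivity of $\Lines(P)$.} The semisimplicial set $\Lines(P)$ is the ``ordered complex of partial frames'' for the projective $\cO$-module $P$: its simplices record sequences of line summands that extend to a full direct-sum decomposition. This is a standard object in the homological stability literature, and the analogous complex over $\cO^n$ (and more generally for projective modules over a Dedekind domain) is known to be Cohen--Macaulay of dimension $\rk(P)-1$, hence $(\rk(P)-2)$-connected, thanks to work of Charney on the ``split building'' and its refinements by van der Kallen and Mirzaii--van der Kallen. I would proceed by induction on $n=\rk(P)$, using the standard link/star recipe: for a vertex $L\in\Lines(P)$, the link is naturally identified with $\Lines(P')$ for any rank-$(n-1)$ projective summand $P'$ complementary to $L$ (equivalently $P'\cong P/L$), and the classification of projective $\cO$-modules guarantees $P'$ is again a finite-rank projective module, so the induction closes. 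Combining a Morse-theoretic or Quillen-style ``bad simplex'' argument on vertex deletion with this inductive identification of links yields the desired $(n-2)$-connectivity.

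\textbf{Main obstacle.} The genuine technical work lies in the connectivity proof itself rather than in its application. Two points require care: first, the complex is ordered (semisimplicial rather than simplicial), so I must keep track of the permutation action when identifying links and verifying the inductive step; second, $L$ and $P/L$ need not be free, and their ideal classes can differ from those of $P$, so the induction hypothesis must be phrased uniformly over all finite-rank projective $\cO$-modules and not only over $\cO^n$. Provided this is arranged, the argument reduces to already-established $\cO$-module versions of Charney's theorem, and Lemma \ref{lemma:highconnected} then immediately delivers the surjection in the required range $0\leq k\leq \rk(P)-1$.
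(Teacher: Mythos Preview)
Your proposal has a genuine gap: the claimed $(\rk(P)-2)$-connectivity of $\Lines(P)$ is not known, and the paper explicitly flags it as \emph{likely false}. The results of Charney and van~der~Kallen you invoke concern complexes of unimodular sequences (partial free bases), and these only coincide with $\Lines(P)$ when every rank-$1$ summand of $P$ is free, i.e.\ when $\cO$ is a PID. For a Dedekind domain with nontrivial class group the vertices of $\Lines(P)$ fall into several $\GL(P)$-orbits indexed by ideal classes, and the ``link equals $\Lines(P/L)$'' induction you sketch does not close: a line $L$ in $P$ need not admit a complement whose line complex has already been handled, and in fact \cite{MillerPatztWilsonYasaki} exhibits Dedekind $\cO$ for which $\Lines(\cO^2)$ is disconnected. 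Although those particular $\cO$ lack units of norm $-1$, the paper observes that the desired connectivity under our hypotheses would, via the argument of \cite[Theorem~A]{ChurchFarbPutman}, contradict \cite[Theorem~B$'$]{ChurchFarbPutman}. So the direct application of Lemma~\ref{lemma:highconnected} to $\Lines(P)$ is blocked.

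The paper circumvents this by passing to an auxiliary complex $B_n(\cO,I)$ built from \emph{free} bases of $\cO^n$ satisfying a congruence condition on the last coordinate. This complex \emph{is} $(n-2)$-connected (Lemma~\ref{lemma:connect}, extracted from \cite{ChurchFarbPutman}), and it maps simplicially to $\Lines(P)$ via $v\mapsto P\cap(\cO\cdot v)$. A finite-index subgroup $\Gamma\leq\CL(P)$ acts on $B_n(\cO,I)$ compatibly with this map, and the commutative square
\[
\begin{CD}
\HH_k^{\Gamma}(B_n(\cO,I);\Q) @>>> \HH_k^{\CL(P)}(\Lines(P);\Q)\\
@VVV @VVV\\
\HH_k(\Gamma;\Q) @>>> \HH_k(\CL(P);\Q)
\end{CD}
\]
has a surjective left column (Lemma~\ref{lemma:highconnected} applied to the genuinely highly connected $B_n(\cO,I)$) and a surjective bottom row (transfer for the finite-index inclusion $\Gamma\hookrightarrow\CL(P)$), forcing the right column to be surjective. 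The essential idea you are missing is this detour through a highly connected complex of free bases together with a finite-index transfer; the connectivity of $\Lines(P)$ itself is neither used nor expected.
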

\begin{proof}
If $\Lines(P)$ were $(\rk(P)-2)$-connected, then this would follow from Lemma \ref{lemma:highconnected}.  Unfortunately,
this is not known and is likely to be false\footnote{In \cite[Proposition 2.12]{MillerPatztWilsonYasaki}, it
is proven that $\Lines(\cO^2)$ is not connected when $\cO$ is quadratic imaginary but not a PID; however,
such $\cO$ cannot contain an element of norm $-1$, so this does not quite give a counterexample
under our given hypotheses.} -- a slight strengthening of this would allow one to run the
argument used to prove \cite[Theorem A]{ChurchFarbPutman} and prove a result contradicting \cite[Theorem B$'$ in \S 5.3]{ChurchFarbPutman}.
We will need an alternative approach.

Let $n = \rk(P)$.  By the classification of finitely generated projective modules over Dedekind domains (see,
e.g.\ \cite[\S 1]{MilnorKTheory}), there exist a nonzero ideal $I \subset \cO$ such
that $P = \cO^{n-1} \oplus I$.  Using this identification, we see that
every element of $\GL(P)$ is of the form
\[\left(
\begin{array}{@{}c|c@{}}
A & B \\
\hline
C & D
\end{array}\right)
\quad \text{with $A \in \Mat_{n-1,n-1}(\cO)$, $B \in \Mat_{n-1,1}(I^{-1})$, $C \in \Mat_{1,n-1}(I)$, $D \in \cO$}.\]
Here $I^{-1} \subset K$ is the inverse of $I$ using the usual multiplication of 
fractional ideals in a Dedekind domain.  Define $\Gamma' = \GL(P) \cap \CL_n(\cO)$, so
$\Gamma'$ is finite-index in both $\CL(P)$ and $\CL_n(\cO)$.  Modulo $I$, the last
row of an element of $\Gamma'$ is of the form $(0,\ldots,0,\ast)$.  Define
$\Gamma$ to be the subgroup of matrices in $\Gamma'$ whose last row equals
$(0,\ldots,0,1)$ modulo $I$.  The group $\Gamma$ is thus finite-index in
$\Gamma'$.

We now construct a space for $\Gamma$ to act on.
Define $B_n(\cO,I)$ to be the semisimplicial set whose $(m-1)$-simplices 
are ordered sequences $(v_1,\ldots,v_m)$ of elements of $\cO^n$ that can be 
extended to a sequence $(v_1,\ldots,v_n)$ with the following properties:
\setlength{\parskip}{0pt}
\begin{compactitem}
\item The $v_i$ form a free $\cO$-basis for $\cO^n$.
\item The last coordinate of each $v_i$ equals either $0$ or $1$ modulo $I$.
\item Precisely $1$ of the $v_i$ has a last coordinate equal to $1$ modulo $I$.
\end{compactitem}
The action of the group $\Gamma$ on $\cO^n$ fixes the last coordinate modulo $I$.  It
follows that $\Gamma$ acts on $B_n(\cO,I)$.  We will prove in Lemma \ref{lemma:connect}
below that $B_n(\cO,I)$ is $(n-2)$-connected; in fact, this result was {\em almost}
proved in \cite{ChurchFarbPutman}, and we will show how to derive it from results in
this paper. For now we will continue with the proof of Lemma
\ref{lemma:surjection} assuming that $B_n(\cO,I)$ is $(n-2)$-connected.\setlength{\parskip}{\baselineskip}

We now come to the key fact that relates the above to $\Lines(P)$ and $\CL(P)$:

\begin{claim}
There is a simplicial map $\Psi\colon B_n(\cO,I) \rightarrow \Lines(P)$ taking
a vertex $v$ of $B_n(\cO,I)$ to the $\cO$-submodule $P \cap (\cO \cdot v)$ of
$P$.
\end{claim}
\begin{proof}[Proof of claim]
It is enough to prove that if $(v_1,\ldots,v_n)$ is a top-dimensional simplex
of $B_n(\cO,I)$, then $(P \cap (\cO \cdot v_1),\ldots,P \cap (\cO \cdot v_n))$
is a line decomposition of $P$.  In other words, letting
$L_i = P \cap (\cO \cdot v_i)$ we must prove that 
$P = L_1 \oplus \cdots \oplus L_n$.

Consider $x \in P$.  We must prove that $x$ can be uniquely expressed
as $x = x_1 + \cdots + x_n$ with $x_i \in L_i$ for $1 \leq i \leq n$.
Since the $v_i$ form a free $\cO$-basis of $\cO^n$ and $P \subset \cO^n$, there
exists unique $\lambda_1,\ldots,\lambda_n \in \cO$ such that
$x = \lambda_1 v_1 + \cdots + \lambda_n v_n$.  We have to show that
$\lambda_i v_i \in P$ for all $1 \leq i \leq n$.

Let $1 \leq i_0 \leq n$ be the unique index such that the last coordinate
of $v_{i_0}$ equals $1$ modulo $I$.  For $1 \leq i \leq n$ with $i \neq i_0$,
the last coordinate of $v_i$ thus equals $0$ modulo $I$, so $v_i \in P$ and thus
$\lambda_i v_i \in P$.  As for $\lambda_{i_0} v_{i_0}$, we have
\[\lambda_{i_0} v_{i_0} = x - \sum_{i \neq i_i} \lambda_i v_i.\]
Each term on the right hand side is an element of $P$, so $\lambda_{i_0} v_{i_0}$
is as well.
\end{proof}

The map $\Psi$ along with the inclusion $\Gamma \hookrightarrow \CL(P)$ induces a map
$\HH_k^{\Gamma}(B_n(\cO,I);\Q) \rightarrow \HH_k^{\CL(P)}(\Lines(P);\Q)$.  This map
fits into a commutative diagram
\[\begin{CD}
\HH_k^{\Gamma}(B_n(\cO,I);\Q) @>>> \HH_k^{\CL(P)}(\Lines(P);\Q) \\
@VVV                                    @VVV \\
\HH_k(\Gamma;\Q)                   @>>> \HH_k(\CL(P);\Q).
\end{CD}\] Since $\Gamma$ is a finite-index subgroup of $\CL(P)$, the
existence of the transfer map (see \cite[Proposition
III.10.4]{BrownCohomology}) implies that the bottom row of this
diagram is a surjection.  Since $B_n(\cO,I)$ is $(n-2)$-connected by
Lemma \ref{lemma:connect} below, Lemma \ref{lemma:highconnected}
implies that the left column of this diagram is a surjection.  We
conclude that the right column of this diagram is a surjection, as
desired.
\end{proof}

It remains to prove the following result, which was promised during the above
proof.

\begin{lemma}
\label{lemma:connect}
Let $\cO$ be the ring of integers in a number field $K$ and let $I \subset \cO$
be a nonzero ideal.  Assume that $\cO$ has a real embedding (which hold, for instance,
if $\cO^{\times}$ has an element of norm $-1$).
Then the space $B_n(\cO,I)$ defined in the
proof of Lemma \ref{lemma:surjection} above is $(n-2)$-connected.
\end{lemma}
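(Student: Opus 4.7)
The plan is to recognize $B_n(\cO,I)$ as essentially a complex of augmented partial bases studied in \cite{ChurchFarbPutman}, and to invoke their connectivity results, which hold precisely under the hypothesis that $\cO$ has a real embedding.

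First, I would set up a clean identification of the combinatorial data. Put $P = \cO^{n-1} \oplus I$, so $P$ is a rank-$n$ projective submodule of $\cO^n$ with $\cO^n/P \cong \cO/I$. A vector $v \in \cO^n$ has last coordinate $\equiv 0 \pmod I$ if and only if $v \in P$, while the vectors with last coordinate $\equiv 1 \pmod I$ form a distinguished coset $\overline{P} \subset \cO^n$. A top-dimensional simplex of $B_n(\cO,I)$ is then an ordered basis $(v_1,\ldots,v_n)$ of $\cO^n$ in which exactly one $v_{i_0} \in \overline{P}$ (and thus generates the quotient $\cO^n/P$) while the remaining $v_i$ lie in $P$. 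This is exactly the data of an ``augmented partial basis'': a generator of the rank-one quotient $\cO/I$, together with a compatible partial basis lying in $P$.

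Second, I would invoke the connectivity result from \cite{ChurchFarbPutman}. Their paper analyzes complexes of partial bases of projective $\cO$-modules augmented by a distinguished vector that reduces nontrivially modulo a fixed quotient; the results in their \S 5.3 (the very section referenced in the footnote of the excerpt) establish $(n-2)$-connectivity of exactly this type of complex when $\cO$ has a real embedding. The observation that $\cO^{\times}$ having an element of norm $-1$ forces $\cO$ to have a real embedding then gives the hypothesis we need. Matching conventions requires some routine bookkeeping: \cite{ChurchFarbPutman} works with simplices indexed by lines rather than basis vectors and uses unordered simplices, but standard barycentric/ordering arguments preserve connectivity in the relevant range.

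The main obstacle will be the precise translation between $B_n(\cO,I)$ as defined here and the complex whose connectivity is proved in \cite{ChurchFarbPutman}. In particular, partial simplices of $B_n(\cO,I)$ come in two ``types'' depending on whether they already contain a vector in $\overline{P}$, and only those admitting a completion of the prescribed form are included. To handle this, I would filter $B_n(\cO,I)$ by these types and apply a bad-simplex or link/star argument, reducing to the connectivity of the subcomplex of simplices already containing a distinguished vector of $\overline{P}$. The link of such a distinguished vertex is (up to reindexing) the complex of partial bases of a rank-$(n-1)$ projective complement of $\cO \cdot v_{i_0}$ inside $\cO^n$, which is covered by the projective-module version of the \cite{ChurchFarbPutman} connectivity theorem. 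The one subtlety to watch carefully is that $P$ is projective but in general not free, so one must cite the projective-module formulation of their results rather than the free-module one.
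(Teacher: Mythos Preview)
Your proposal has a genuine gap: you assert that \cite{ChurchFarbPutman} establishes $(n-2)$-connectivity of ``exactly this type of complex,'' but this is not so. The paper explicitly remarks that the needed result was \emph{almost} proved in \cite{ChurchFarbPutman}, and then spends a page deriving it. Moreover, your pointer to \S 5.3 of \cite{ChurchFarbPutman} is off: that section contains the nonvanishing Theorem~B$'$, not a connectivity theorem; the relevant input is their Theorem~E$'$ in \S 2.3. What Theorem~E$'$ gives is that the complex $\hB'_n(\cO,I)$---where one drops the ``exactly one $v_i$ has last coordinate $\equiv 1$'' condition and allows any number of such $v_i$---is weakly Cohen--Macaulay of dimension $n-1$. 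Your complex $B_n(\cO,I)$ (or its unordered version $\hB_n(\cO,I)$) is a proper subcomplex, and passing down requires real work.

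Concretely, the paper proceeds as follows. First, it passes from the ordered semisimplicial set $B_n(\cO,I)$ to the unordered simplicial complex $\hB_n(\cO,I)$ via \cite[Proposition~2.14]{RandalWilliamsWahl}, reducing to showing $\hB_n(\cO,I)$ is weakly Cohen--Macaulay of dimension $n-1$; you do not mention this step, and your remark that ``standard barycentric/ordering arguments'' handle it sweeps under the rug the need for the Cohen--Macaulay condition rather than mere connectivity. Second, rather than a bad-simplex filtration, the paper builds, for every simplex $\sigma$ of $\hB_n(\cO,I)$, an explicit retraction $\link_{\hB'_n(\cO,I)}(\sigma)\to\link_{\hB_n(\cO,I)}(\sigma)$. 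When $\sigma$ already contains a vertex $w\in\overline{P}$ this is easy (send $v\mapsto v-w$ when $v\in\overline{P}$), but when all vertices of $\sigma$ lie in $P$ one must eliminate edges joining two vertices of $\overline{P}$, and the paper does this by an edge-subdivision trick. Your sketched bad-simplex argument does not address this second case: the link (in the ``good'' subcomplex) of a simplex with all vertices in $P$ is not a standard partial-basis complex of a projective module, so your appeal to a ``projective-module formulation'' of \cite{ChurchFarbPutman} does not apply to it.
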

\begin{proof}
We start by introducing an auxiliary space.  Define $\hB_n(\cO,I)$ to be
the simplicial complex whose $(m-1)$-simplices are unordered sets
$\{v_1,\ldots,v_m\}$ of elements of $\cO^n$ such that some
(equivalently, any) ordering is an $(m-1)$-simplex of $B_n(\cO,I)$.

Recall that a simplicial complex $X$ is said to be {\em weakly 
Cohen–-Macaulay of dimension $r$} if it satisfies the following two properties:
\setlength{\parskip}{0pt}
\begin{compactitem}
\item $X$ is $(r-1)$-connected.
\item For all $m$-dimensional simplices $\sigma$ of $X$, the link
$\link_X(\sigma)$ of $\sigma$ in $X$ is $(r-m-2)$-connected.
\end{compactitem}
These two conditions can be combined if you regard $\sigma = \emptyset$ as a $-1$-simplex
of $X$ with $\link_X(\sigma) = X$. \setlength{\parskip}{\baselineskip}

By definition, the only difference between $B_n(\cO,I)$ and $\hB_{n}(\cO,I)$ is
that the vertices in a simplex of $B_n(\cO,I)$ are ordered.  In
\cite[Proposition 2.14]{RandalWilliamsWahl}, it is proved that in this
situation, if $\hB_{n}(\cO,I)$ is weakly Cohen--Macaulay of dimension $(n-1)$, then
$B_n(\cO,I)$ is $(n-2)$-connected.  To prove the lemma, therefore, it is enough
to prove that $\hB_n(\cO,I)$ is weakly Cohen--Macaulay of dimension $(n-1)$.

We now introduce yet another space.  Define $\hB'_n(\cO,I)$ to be the
simplicial complex whose $(m-1)$-simplices are unordered sets $\{v_1,\ldots,v_m\}$
of elements of $\cO^n$ that can be
extended to an unordered set $\{v_1,\ldots,v_n\}$ with the following properties:
\setlength{\parskip}{0pt}
\begin{compactitem}
\item The $v_i$ form a free $\cO$-basis for $\cO^n$.
\item The last coordinate of each $v_i$ equals either $0$ or $1$ modulo $I$.
\end{compactitem}
We thus have $\hB_n(\cO,I) \subset \hB'_n(\cO,I)$.  In 
\cite[Theorem E$'$ from \S 2.3]{ChurchFarbPutman}, Church--Farb--Putman
proved that $\hB'_n(\cO,I)$ is weakly Cohen--Macaulay of dimension $(n-1)$.  This
is where we use the assumption that $\cO$ has a real embedding. \setlength{\parskip}{\baselineskip}

We now show how to use the fact that $\hB'(\cO,I)$ is weakly Cohen--Macaulay of dimension $(n-1)$
to prove the same fact for $\hB(\cO,I)$.
Let $\sigma$ be an $m$-simplex of $\hB_n(\cO,I)$, where we allow $\sigma = \emptyset$ and $m=-1$.
We then have
\[\link_{\hB(\cO,I)}(\sigma) \subset \link_{\hB'(\cO,I)}(\sigma).\]
Since $\hB'_n(\cO,I)$ is weakly Cohen-Macaulay of dimension $(n-1)$, the space
$\link_{\hB'(\cO,I)}(\sigma)$ is $(n-m-3)$-connected.  To prove the same for
$\link_{\hB(\cO,I)}(\sigma)$, it is enough to construct a retraction  
$\rho\colon \link_{\hB'(\cO,I)}(\sigma) \rightarrow \link_{\hB(\cO,I)}(\sigma)$.
There are two cases.

\begin{casea}
There exists a vertex $w$ of $\sigma$ whose last coordinate equals $1$ modulo $I$.
\end{casea}
\begin{proof}[Proof of case]
In this case, the complex $\link_{\hB(\cO,I)}(\sigma)$ is the full subcomplex of $\link_{\hB'(\cO,I)}(\sigma)$
spanned by vertices whose last coordinates equal $0$ modulo $I$.
For all vertices $v$ of $\link_{\hB'(\cO,I)}(\sigma)$, we define
\[\rho(v) = \begin{cases}
v-w & \text{if the last coordinate of $v$ equals $1$ modulo $I$},\\
v   & \text{otherwise}.
\end{cases}\]
The last coordinate of $\rho(v)$ thus equals $0$ modulo $I$.  This extends to a simplicial
retraction $\rho\colon \link_{\hB'(\cO,I)}(\sigma) \rightarrow \link_{\hB(\cO,I)}(\sigma)$
due to the following fact:
\setlength{\parskip}{0pt}
\begin{compactitem}
\item If $\{v_1,\ldots,v_n\}$ is a free $\cO$-basis of $\cO^n$ and $c_2,\ldots,c_n \in \cO$, then
$\{v_1,v_2+c_2 v_1,\ldots,v_n+c_n v_1\}$ is a free $\cO$-basis of $\cO^n$.
\end{compactitem}
This completes the proof for this case.\setlength{\parskip}{\baselineskip}
\end{proof}

\begin{casea}
The last coordinate of all vertices of $\sigma$ equals $0$ modulo $I$.
\end{casea}
\begin{proof}[Proof of case]
In this case, the complex $\link_{\hB(\cO,I)}(\sigma)$ is the subcomplex of $\link_{\hB'(\cO,I)}(\sigma)$
consisting of simplices that contain no edges between vertices both of whose last coordinates equal $1$ modulo $I$.
We remark that this is not a full subcomplex.

Let $\cE$ be the set of edges of $\link_{\hB'(\cO,I)}(\sigma)$ joining vertices both of whose last coordinates equal
$1$ modulo $I$.  The retraction we will construct will depend on two arbitrary choices:
\setlength{\parskip}{0pt}
\begin{compactitem}
\item An enumeration $\cE = \{e_1,e_2,\ldots\}$.
\item For each $i \geq 1$, an enumeration $e_i = \{w_i,w_i'\}$ of the two vertices of $e_i$.  Since $w_i$ and $w'_i$ are distinct
vertices of $\link_{\hB'(\cO,I)}(\sigma)$ whose last coordinates are $1$ modulo $I$, we have that $w_i-w'_i$ is a vertex
of $\link_{\hB'(\cO,I)}(\sigma)$ whose last coordinate is $0$ modulo $I$.
\end{compactitem}
For each $i \geq 1$, we define a map $\rho_i\colon \link_{\hB'(\cO,I)}(\sigma) \rightarrow \link_{\hB'(\cO,I)}(\sigma)$
as follows.  First, let $S_i$ be the result of subdividing the edge $e_i$ of $\link_{\hB'(\cO,I)}(\sigma)$ with a new vertex $\kappa_i$.   
We then define a simplicial map $\rho'_i\colon S_i \rightarrow \link_{\hB'(\cO,I)}(\sigma)$ via the formula \setlength{\parskip}{\baselineskip}
\[\rho'_i(v) = \begin{cases}
w_i-w'_i & \text{if $v = \kappa_i$},\\
v        & \text{if $v \neq \kappa_i$}.
\end{cases} \quad \quad (\text{$v$ a vertex of $S_i$}).\]
This is a simplicial map for the following reason.  It is clear that $\rho'_i$ extends over the simplices of $S_i$ that do
not contain $\kappa_i$.  The simplices of $S_i$ that do contain $\kappa_i$ are of the form
\[\{w_i,\kappa_i,v_3,\ldots,v_m\} \quad \text{and} \quad \{\kappa_i,w'_i,v_3,\ldots,v_m\}\]
for a simplex $\{w_i,w'_i,v_3,\ldots,v_m\}$ of $\link_{\hB'(\cO,I)}(\sigma)$.  The images under $\rho'_i$ of these
two simplices are
\[\{w_i,w_i-w'_i,v_3,\ldots,v_m\} \quad \text{and} \quad \{w_i-w'_i,w'_i,v_3,\ldots,v_m\},\]
both of which are simplices of $\link_{\hB'(\cO,I)}(\sigma)$.  The map $\rho_i$ is then the composition
of $\rho'_i$ with the (nonsimplicial) subdivision map $\link_{\hB'(\cO,I)}(\sigma) \stackrel{\cong}{\rightarrow} S_i$.

Now define $\rho\colon \link_{\hB'(\cO,I)}(\sigma) \rightarrow \link_{\hB'(\cO,I)}(\sigma)$ to be the composition
\[\link_{\hB'(\cO,I)}(\sigma) \stackrel{\rho_1}{\longrightarrow} \link_{\hB'(\cO,I)}(\sigma) \stackrel{\rho_2}{\longrightarrow} \link_{\hB'(\cO,I)}(\sigma) \stackrel{\rho_3}{\longrightarrow} \cdots.\]
This infinite composition makes sense and is continuous since for each simplex $\sigma$ of $\link_{\hB'(\cO,I)}(\sigma)$, the sequence
\[\sigma, \rho_1(\sigma), \rho_2 \circ \rho_1(\sigma), \rho_3 \circ \rho_2 \circ \rho_1(\sigma),\ldots\]
of subsets eventually stabilizes. These images are not simplices, but rather finite unions of simplices.  From
its construction, it is clear that $\rho$ is a retraction from $\link_{\hB'(\cO,I)}(\sigma)$ to $\link_{\hB(\cO,I)}(\sigma)$.
\end{proof}

This completes the proof of the lemma.
\end{proof}
\subsection{The proof of Proposition \ref{proposition:projectiveactiontrivial}}
\label{section:projectiveactiontrivialproof}

We finally prove Proposition
\ref{proposition:projectiveactiontrivial}, which completes the proof
Theorem \ref{maintheorem:highsmall}.

\begin{proof}[Proof of Proposition \ref{proposition:projectiveactiontrivial}]
We start by recalling the setup.  
Let $\cO$ be the ring of integers in a number field $K$ and let
$P$ be a finite-rank projective $\cO$-module.
Assume that $\cO^{\times}$ has an element of norm $-1$, and
let $r$ and $2s$ be the numbers of real and complex embeddings of $K$.
We must prove that the action of $\GL(P)$ on its normal subgroup $\CL(P)$ induces
the trivial action on $\HH_k(\CL(P);\Q)$ for
$0 \leq k \leq \min(r+s,\rk(P))-1$.  This action factors through $\GL(P)/\CL(P) \cong \Z/2$.

The group $\GL(P)$ acts on both $\CL(P)$ and on the complex of lines
$\Lines(P)$.  We thus get an induced action of $\GL(P)/\CL(P)$ on
$\HH_k^{\CL(P)}(\Lines(P);\Q)$.  The natural map
$\HH_k^{\CL(P)}(\Lines(P);\Q) \rightarrow \HH_k(\CL(P);\Q)$ is
$\GL(P)/\CL(P)$-equivariant, and by Lemma \ref{lemma:surjection} is
also surjective for $0 \leq k \leq \rk(P)-1$.  We deduce that to prove
that the action of $\GL(P)/\CL(P)$ on $\HH_k(\CL(P);\Q)$ is trivial
for $0 \leq k \leq \min(r+s,\rk(P))-1$, it is enough to prove that the
$\GL(P)/\CL(P)$-action on $\HH_k^{\CL(P)}(\Lines(P);\Q)$ is trivial
for $0 \leq k \leq r+s-1$.

By Lemma \ref{lemma:equivspectralsequence} (and the paragraph following that lemma), 
we have a spectral sequence of $\Q[\GL(P)/\CL(P)]$-modules of the form
\begin{equation}
\label{eqn:thespec}
E^1_{pq} = \bigoplus_{\sigma \in \Lines(P)^{(q)}/\CL(P)} \HH_p((\CL(P))_{\tsigma};\Q) \Rightarrow \HH_{p+q}^{\CL(P)}(\Lines(P);\Q).
\end{equation}
Here $\tsigma \in \Lines(P)^{(k)}$ is an arbitrary lift of $\sigma$.  The key to the proof is the following.

\begin{claim}
The group $\GL(P)/\CL(P)$ acts trivially on $E^1_{pq}$ for $0 \leq p \leq r+s-1$.
\end{claim}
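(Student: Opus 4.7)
The plan is to show both that every $\CL(P)$-orbit in $\Lines(P)^{(q)}$ is stable under $\GL(P)/\CL(P)$ (so the $\Z/2$-action preserves each summand of $E^1_{pq}$) and that the induced involution on each summand $\HH_p(\CL(P)_{\tsigma};\Q)$ is trivial for $p \leq r+s-1$. For a representative $\tsigma = (L_1,\ldots,L_{q+1})$, I would extend it to a line decomposition $P = L_1 \oplus \cdots \oplus L_n$, pick $u \in \cO^{\times}$ of norm $-1$, and let $g_{\tsigma} \in \GL(P)$ be the automorphism scaling $L_1$ by $u$ and fixing $L_2,\ldots,L_n$. Then $g_{\tsigma} \in \GL(P)_{\tsigma}$ and $\chi(g_{\tsigma}) = -1$, so each $\CL(P)$-orbit coincides with the corresponding $\GL(P)$-orbit, and the $\Z/2$-action on the $\tsigma$-summand is conjugation by $g_{\tsigma}$.

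Next, following the pattern of Lemma \ref{lemma:decompose}, I would decompose $\GL(P)_{\tsigma} = U \rtimes M$ with
\[ U = \Hom(P', L_1 \oplus \cdots \oplus L_{q+1}), \quad M = \GL(L_1) \times \cdots \times \GL(L_{q+1}) \times \GL(P'), \]
where $P' = L_{q+2} \oplus \cdots \oplus L_n$ (possibly zero). This yields $\CL(P)_{\tsigma} = U \rtimes M_0$ with $M_0 = \ker(\chi|_M)$, and the ``pure Levi'' subgroup $H := \CL(L_1) \times \cdots \times \CL(L_{q+1}) \times \CL(P')$ has finite index in $M_0$.

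The heart of the argument is to produce a $\Z/2$-equivariant surjection $\HH_p(H;\Q) \twoheadrightarrow \HH_p(\CL(P)_{\tsigma};\Q)$ for $p \leq r+s-1$, factored as
\[ \HH_p(H;\Q) \xrightarrow{\cong} \HH_p(U \rtimes H;\Q) \twoheadrightarrow \HH_p(U \rtimes M_0;\Q) = \HH_p(\CL(P)_{\tsigma};\Q). \]
The first arrow is an isomorphism in the stated range by Lemma \ref{lemma:spectralsequence}, applied with ``$Q$'' $= L_1 \oplus \cdots \oplus L_{q+1}$, ``$P$'' $= P'$, and ``$G$'' $= \CL(L_1) \times \cdots \times \CL(L_{q+1})$, so that $G \times \CL(P') = H$; the second is a transfer surjection for all $p$, since $U \rtimes H$ has finite index in $U \rtimes M_0$. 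Both maps are equivariant under conjugation by $g_{\tsigma}$, because $g_{\tsigma} \in M$ normalizes $U$, $M_0$, and $H$.

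To finish, I would observe that conjugation by $g_{\tsigma} = (u,1,\ldots,1,1) \in M$ on the direct product $H$ acts componentwise: it acts as conjugation by $u$ on $\CL(L_1) \subseteq \cO^{\times}$ (trivial, since $\cO^{\times}$ is abelian) and as the identity on each remaining factor. Therefore $\Z/2$ acts trivially on $\HH_p(H;\Q)$, and by the equivariant surjection above it also acts trivially on $\HH_p(\CL(P)_{\tsigma};\Q)$ for $p \leq r+s-1$. Summing over orbits $\sigma$ then gives the claim. The main obstacle I anticipate is checking $\Z/2$-equivariance at each step and correctly matching the hypotheses of Lemma \ref{lemma:spectralsequence}---particularly recognizing $H$ as a direct product of the required form $G \times \CL(P')$---all of which depend on the fortunate fact that $g_{\tsigma}$ can be chosen inside the Levi factor $M$, without any unipotent contribution from $U$.
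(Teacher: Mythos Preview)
Your proposal is correct and follows essentially the same route as the paper: both pick the element scaling $L_1$ by a unit of norm $-1$, observe it commutes with the ``pure Levi'' $H = \CL(L_1)\times\cdots\times\CL(P')$, and then combine Lemma~\ref{lemma:spectralsequence} with a transfer argument to show that $\HH_p(H;\Q)$ surjects onto $\HH_p(\CL(P)_{\tsigma};\Q)$ for $p\le r+s-1$. Your phrasing in terms of $\Z/2$-equivariance of the two maps is a minor repackaging of the paper's observation that $X$ acts trivially on the image of $\HH_p(\Lambda;\Q)$, but the content is identical.
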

\begin{proof}[Proof of claim]
The group $\GL(P)/\CL(P)$ acts trivially on the set $\Lines(P)^{(q)}/\CL(P)$, so it does not permute
the terms in \eqref{eqn:thespec}.  Let $\tsigma = (L_1,\ldots,L_{q-1})$ be a $q$-simplex of $\Lines(P)$ such
that $\HH_p((\CL(P))_{\tsigma};\Q)$ is one of the terms in \eqref{eqn:thespec}.  We must prove
that the group $\GL(P)/\CL(P)$ acts trivially on $\HH_p((\CL(P))_{\tsigma};\Q)$.
Since $\GL(P)/\CL(P) \cong \Z/2$, it is enough to find a single element of $\GL(P) \setminus \CL(P)$ that
acts trivially.  Extend $\tsigma$ to a line decomposition $(L_1,\ldots,L_{\rk(P)})$ of $P$.  Set
$P' = L_1 \oplus \cdots \oplus L_{q-1}$ and $P'' = L_q \oplus \cdots \oplus L_{\rk(P)}$, so
$P = P' \oplus P''$.  Let $x$ be an element of $\GL(L_1)$ that does not lie in $\CL(L_1)$ and let
\[X = (x,1,\ldots,1) \in \GL(L_1) \times \GL(L_2) \times \cdots \times \GL(L_{q-1}) \times \GL(P'') \subset \GL(P).\]
Since $\GL(L_i) \cong \cO^{\times}$ is abelian, the element $X$ commutes with the subgroup
\[\Lambda = \CL(L_1) \times \CL(L_2) \times \cdots \times \CL(L_{q-1}) \times \CL(P'')\]
of $\CL(P)$.  It follows that $X$ acts trivially on the image of $\HH_p(\Lambda;\Q)$ in
$\HH_p((\CL(P))_{\sigma};\Q)$.  It is enough, therefore, to prove that the map
$\HH_p(\Lambda;\Q) \rightarrow \HH_p((\CL(P))_{\tsigma};\Q)$ is surjective.

It follows from Lemma \ref{lemma:decompose} that the $\GL(P)$-stabilizer
of $\tsigma$ can be written as
\[\Hom(P'',P') \rtimes \left(\GL\left(L_1\right) \times \cdots \times \GL\left(L_{q-1}\right) \times \GL\left(P''\right)\right).\]
From this, we see that
\[\Lambda' = \Hom(P'',P') \rtimes \left(\CL\left(L_1\right) \times \cdots \times \CL\left(L_{q-1}\right) \times \CL\left(P''\right)\right)\]
is a finite-index subgroup of $\CL(P)_{\tsigma}$.  The existence of the transfer map
(see \cite[Proposition III.10.4]{BrownCohomology}) implies that the map $\HH_p(\Lambda';\Q) \rightarrow \HH_p(\CL(P)_{\tsigma};\Q)$ is surjective.  Finally, Lemma \ref{lemma:spectralsequence} (with $G = \CL(L_1) \times \cdots \times \CL(L_{q-1})$)
implies that the map $\HH_p(\Lambda;\Q) \rightarrow \HH_p(\Lambda';\Q)$ is surjective (this is where
we use the assumption that $0 \leq p \leq r+s-1$).  We conclude that the map
$\HH_p(\Lambda;\Q) \rightarrow \HH_p((\CL(P))_{\tsigma};\Q)$ is surjective, as desired.
\end{proof}

Now, the spectral sequence \eqref{eqn:thespec} computes the associated graded of a 
filtration $\cF_{\bullet}$ of $\Q[\GL(P)/\CL(P)]$-modules on 
$\HH_{k}^{\CL(P)}(\Lines(P);\Q)$ for
each $k$.  The above claim implies that $\GL(P)/\CL(P)$ acts trivially
on $E^{\infty}_{pq}$ for $0 \leq p \leq r+s-1$, so for $0 \leq k \leq r-s-1$ 
the $\GL(P)/\CL(P)$-action on the 
associated graded terms for the filtration 
$\cF_{\bullet} \HH_{k}^{\CL(P)}(\Lines(P);\Q)$ are trivial.
Since $\GL(P)/\CL(P) = \Z/2$ is a finite
group, Maschke's theorem implies that the category of $\Q[\GL(P)/\CL(P)]$-modules is semisimple, so this
implies that the $\GL(P)/\CL(P)$-action on $\HH_{k}^{\CL(P)}(\Lines(P);\Q)$ for $0 \leq k \leq r+s-1$ is also trivial.  The
lemma follows.
\end{proof}

\begin{footnotesize}
\noindent
\begin{tabular*}{\linewidth}[t]{@{}p{\widthof{Department of Mathematics}+0.5in}@{}p{\linewidth - \widthof{Department of Mathematics} - 0.5in}@{}}
{\raggedright
Andrew Putman\\
Department of Mathematics\\
University of Notre Dame \\
255 Hurley Hall\\
Notre Dame, IN 46556\\
{\tt andyp@nd.edu}}
&
{\raggedright
Daniel Studenmund\\
Department of Mathematics\\
University of Notre Dame \\
255 Hurley Hall\\
Notre Dame, IN 46556\\
{\tt dstudenm@nd.edu}}
\end{tabular*}\hfill
\end{footnotesize}


\begin{thebibliography}{00}
\begin{footnotesize}
\setlength{\itemsep}{-1mm}

\bibitem{AshRudolph}
A. Ash\ and\ L. Rudolph, The modular symbol and continued fractions in higher dimensions, Invent. Math. 55 (1979), no.~3, 241--250.

\bibitem{BieriNotes}
R. Bieri, {\it Homological dimension of discrete groups}, second edition, Queen Mary College Mathematical Notes, Queen Mary College, Department of Pure Mathematics, London, 1981. 

\bibitem{BorelStable}
A. Borel, Stable real cohomology of arithmetic groups, Ann. Sci. \'{E}cole Norm. Sup. (4) 7 (1974), 235--272 (1975). 

\bibitem{BorelSerreCorners}
A. Borel and J.-P. Serre, Corners and arithmetic groups, Comment. Math. Helv. 48 (1973), 436--491.

\bibitem{BrownCohomology}
K. S. Brown, {\it Cohomology of groups}, corrected reprint of the 1982 original, Graduate Texts in Mathematics, 87, Springer-Verlag, New York, 1994.

\bibitem{BrownBuildings}
K. S. Brown, {\it Buildings}, reprint of the 1989 original, Springer Monographs in Mathematics, Springer-Verlag, New York, 1998.

\bibitem{ChurchFarbPutman}
T. Church, B. Farb\ and\ A. Putman, Integrality in the Steinberg module and the top-dimensional cohomology of $\SL_n \cO_K$, Amer. J. Math. 141 (2019), no.~5, 1375--1419. 

\bibitem{ChurchPutman}
  T. Church and A. Putman, The codimension-one cohomology of
  $\SL_n(\Z)$, Geom. Topol. 21 (2017), no. 2, 999--1032.

\bibitem{OldThmC}
M. Dutour Sikiri\'{ć}, H. Gangl, P. Gunnells, J. Hanke, A. Sch\'{u}rmann, and D. Yasaki,
On the cohomology of linear groups over imaginary quadratic fields, J. Pure Appl. Algebra 220 (2016), no.~7, 2564--2589.

\bibitem{FriedmanSimplicial}
G. Friedman, Survey article: An elementary illustrated introduction to
simplicial sets, Rocky Mountain J. Math. 42 (2012), no.~2, 353--423.

\bibitem{LeeSzczarba}
R. Lee\ and\ R. H. Szczarba, On the homology and cohomology of congruence subgroups, Invent. Math. 33 (1976), no.~1, 15--53.

\bibitem{MillerPatztWilsonYasaki}
J. Miller, P. Patzt, J. Wilson, and D. Yasaki, 
Non-integrality of some Steinberg modules, J. Topol. 13 (2020), no.~2, 441--459.

\bibitem{MilnorKTheory}
J. Milnor, {\it Introduction to algebraic $K$-theory}, Princeton University Press, Princeton, NJ, 1971.

\bibitem{NeukirchBook}
J. Neukirch, {\it Algebraic number theory}, translated from the 1992 German original and with a note by Norbert Schappacher, Grundlehren der Mathematischen Wissenschaften, 322, Springer-Verlag, Berlin, 1999. 

\bibitem{RandalWilliamsWahl}
O. Randal-Williams\ and\ N. Wahl, Homological stability for automorphism groups, Adv. Math. 318 (2017), 534--626. 

\bibitem{Reeder}
M. Reeder, The Steinberg module and the cohomology of arithmetic groups, J. Algebra 141 (1991), no.~2, 287--315.

\bibitem{SerreLadic}
J.-P. Serre, {\it Abelian $l$-adic representations and elliptic curves}, revised reprint of the 1968 original, Research Notes in Mathematics, 7, A K Peters, Ltd., Wellesley, MA, 1998.

\bibitem{Solomon}
L. Solomon, The Steinberg character of a finite group with $BN$-pair, in {\it Theory of Finite Groups (Symposium, Harvard Univ., Cambridge, Mass., 1968)}, 213--221, Benjamin, New York.

\end{footnotesize}
\end{thebibliography}
\end{document}